\DeclarePairedDelimiter{\floor}{\lfloor}{\rfloor}
\theoremstyle{plain}
\newtheorem{theorem}{Theorem}[section]
\newtheorem{corollary}[theorem]{Corollary}
\newtheorem{lemma}[theorem]{Lemma}
\newtheorem{proposition}[theorem]{Proposition}
\newtheorem{condition}[theorem]{Condition}
\theoremstyle{definition}
\newtheorem{definition}[theorem]{Definition}
\theoremstyle{remark}
\numberwithin{equation}{section}
\newcommand{\N}{\mathbb{N}}
\newcommand{\R}{\mathbb{R}}
\newcommand{\Z}{\mathbb{Z}}
\newcommand{\I}{\mathds{1}}
\newcommand{\p}{\mathbbm{P}}
\newcommand{\cA}{\mathcal{A}}
\newcommand{\cB}{\mathcal{B}}
\newcommand{\cF}{\mathcal{F}}
\newcommand{\cN}{\mathcal{N}}
\newcommand{\cO}{\mathcal{O}}
\newcommand{\E}[1]{\mathbb{E}\left [ \, #1 \, \right ]}
\renewcommand{\epsilon}{\varepsilon}
\renewcommand{\phi}{\varphi}
\newcommand{\intd}[1]{\,\mathrm{d}#1}
\newcommand{\norm}[1]{\left\lVert #1 \right\rVert}
\newcommand{\scalar}[2]{\left\langle #1,#2 \right\rangle}
\newcommand{\1}[1]{\,\mathbbm{1}\! \left\{ #1 \right\} }
\newcommand{\darg}{\,\cdot\,}
\newcommand{\supp}{\text{supp}\,}
\newcommand{\dzn}[1]{d_{\infty}\left( #1 \right) }
\begin{document}

\title{Nonparametric Density Estimation for Spatial Data with Wavelets\footnote{This research was supported by the Fraunhofer ITWM, 67663 Kaiserslautern, Germany which is part of the Fraunhofer Gesellschaft zur F{\"o}rderung der angewandten Forschung e.V.}}
\author{Johannes T. N. Krebs\footnote{Department of Statistics, University of California, Davis, One Shields Avenue, 95616, CA, USA, email: \tt{jtkrebs@ucdavis.edu} }\; \footnote{Corresponding author} }

\date{\today}
\maketitle

\begin{abstract}
\setlength{\baselineskip}{1.8em}
Nonparametric density estimators are studied for $d$-dimensional, strongly spatial mixing data which is defined on a general $N$-dimensional lattice structure. We consider linear and nonlinear hard thresholded wavelet estimators which are derived from a $d$-dimensional multiresolution analysis. We give sufficient criteria for the consistency of these estimators and derive rates of convergence in $L^{p'}$ for $p'\in [1,\infty)$. For this reason, we study density functions which are elements of a $d$-dimensional Besov space $B^s_{p,q}(\R^d)$. We also verify the analytic correctness of our results in numerical simulations.\medskip\\
\noindent {\bf Keywords:} Besov spaces; Density estimation; Hard thresholding; Rate of convergence; Spatial lattice processes; Strong spatial mixing; Wavelets

\noindent {\bf MSC 2010:} Primary: 62G07; 62H11; 65T60; Secondary: 65C40; 60G60
\end{abstract}

This article considers methods of nonparametric density estimation for spatially dependent data with wavelets. There is an extensive literature on the density estimation problem for i.i.d.\ data or time series. Recently, inference techniques for spatial data have gained importance because of their increased relevance in modern applications such as image analysis, forestry, epidemiology or geophysics. See the monographs of \cite{cressie1993statistics} and \cite{guyon1995random} for a systematic introduction on spatial data and random fields.

So far when working with random fields, the kernel method has been a popular tool both in regression and density estimation, see, e.g., \cite{carbon1996kernel}, \cite{hallin2001density}, \cite{hallin2004local}, \cite{biau2003spatial} and \cite{carbon2007kernel}. In a more recent paper, \cite{dabo2013kernel} extend the kernel method to functional stationary random fields, they estimate the spatial density w.r.t.\ a reference measure. \cite{dabo2014kernel} propose a kernel method in spatial density estimation which also allows for spatial clustering. \cite{amiri2016nonparametric} study asymptotic properties of a recursive version of the Parzen-Rozenblatt estimator.

While the kernel method is efficient if the density has unbounded support, it often has disadvantages for densities with compact support because of the boundary bias. Furthermore, the kernel method requires the density to satisfy certain smoothness conditions. In situations where the density function does not meet these requirements the wavelet method is an alternative which often performs relatively well because it adapts automatically to the regularity of the curve to be estimated. Wavelet estimators assume that the underlying curve belongs to a function space with certain degrees of smoothness. The wavelet estimators do not depend on the smoothness parameters, nevertheless, they behave as if the true curve is known in advance and attain the optimal rates of convergence. This is in particular true for the hard thresholding estimator of \cite{donoho1996}. 

However, estimating the density of spatial data with wavelets has received little attention. Only the special case of time series has been thoroughly investigated: \cite{masry1997multivariate}, \cite{masry2000wavelet}, \cite{bouzebda2015multivariate} and \cite{bouzebda2017multivariate} study the wavelet method for density and regression estimators for multivariate and stationary time series. In a recent article \cite{LiWavelets} studies wavelet estimators for compactly supported one-dimensional Besov densities on stationary and strongly mixing random fields.

In the present article, we continue with these considerations for $d$-dimensional densities and study the linear and the hard thresholding estimator based on not necessarily isotropic wavelets. It is well-known that the hard thresholding estimator performs better than its linear analogue for certain densities in the one-dimensional setting. We will show a similar behavior for multivariate density functions.

The hard thresholding estimator has a linear basic component w.r.t.\ a coarse level $j_0$. Additionally, nonlinear details are added for higher levels $j_0\le j \le j_1$ if their contribution is significant in the statistical sense. This implies that this estimator can converge faster than the linear estimator in certain parameter settings.

The generalization to arbitrary dimensions is non-trivial, in particular because the definitions of the underlying Besov space $B^s_{p,q}(\R^d)$ have to be generalized to the $d$-dimensional case. For isotropic wavelets there already exist such generalizations, see for instance \cite{meyer1990ondelettes} or \cite{haroske2005wavelet}. However, as we also allow for density estimators with nonisotropic wavelets, we need a more general definition. This is one of the main differences to the existing work. Moreover, we allow for density functions on $\R^d$ which do not necessarily have compact support.

We assume that $Z = \{ Z(s): s\in \Z^N \}$ is a random field with equal marginal laws on $\R^d$ which admit a square integrable density $f$ w.r.t.\ to the $d$-dimensional Lebesgue measure $\lambda^d$. Then for an orthonormal basis $\{ b_u: u\in \N_+ \}$ of $L^2(\lambda^d)$ there is the representation $f = \sum_{u\in\N_+} \scalar{f}{b_u} b_u$, where $\scalar{\cdot}{\cdot}$ is the inner product on the function space $L^2(\lambda^d)$. Since $f$ is a density, we have the fundamental relationship between an observed sample $\{ Z(s):s\in I \}$ ($I\subseteq \Z^N$) and a coefficient $\scalar{f}{b_u}$ from this representation: $\scalar{f}{b_u} = \E{ b_u(Z(s)) } \approx |I|^{-1} \sum_{s\in I} b_u(Z(s))$.

It is well-known that replacing the true coefficient with the empirical approximation yields a consistent density estimate for an i.i.d.\ sample of one-dimensional data under certain conditions, see, e.g., \cite{devroye1985nonparametric} or \cite{hardle2012wavelets}. In the particular case of wavelets, \cite{kerkyacharian1992density} derive rates of convergence for the linear wavelet estimator.

In contrast to linear wavelet estimators nonlinear wavelet estimators are particularly useful if the density curve features high-frequency oscillations or shows an erratic behavior. Rates of convergence of the hard thresholded wavelet estimator are studied by \cite{hall1995} and \cite{donoho1996}. Since then the wavelet method for the density problem has been studied in various special settings: \cite{hall1998block}, \cite{cai1999adaptive} and \cite{chicken2005block} consider rates of convergence for wavelet block thresholding. \cite{gine2009uniform} give several uniform limit theorems for wavelet density estimators for a compactly supported density and i.i.d.\ sample data. \cite{xue2004approximation} study wavelet based density estimation under censorship. \cite{gine2014wavelet} investigate wavelet projection kernels in the density estimation problem. In this article, we continue the analysis for multivariate sample data which is spatially dependent.

This manuscript is organized as follows: we give the fundamental definitions and summarize the main facts of Besov spaces in $d$ dimensions in Section~\ref{Section_NotationDefinitions}. In Section~\ref{WaveletResults} we study in detail the wavelet density estimators. We give criteria which are sufficient for the consistency of the nonparametric estimators and establish rates of convergence. Section~\ref{ExamplesOfApplication} is devoted to numerical applications. We use an algorithm proposed by \cite{kaiser2012} for the simulation of the random field and estimate its marginal density with the linear and the hard thresholded wavelet estimator. Section~\ref{Proofs_of_the_main_theorems} contains the proofs of the results from Section~\ref{WaveletResults}. Appendix~\ref{Appendix_Exponential inequalities} contains useful inequalities for dependent sums. As the wavelet estimators are a priori not necessarily a density, we consider in Appendix~\ref{Appendix_QuestionOfNormalization} the question under which circumstances a normalized estimator is consistent.

\section{Notation and Definitions}\label{Section_NotationDefinitions}
This section is divided in four parts: firstly, we introduce the concepts for multidimensional wavelets. Secondly, we define the multidimensional Besov spaces. We explain the data generating process in the third step. Finally, we define the wavelet density estimator. 

In the following, we write $L^2(\lambda^d)$ for $L^2\left(\R^d, \cB(\R^d), \lambda^d \right)$, where $\lambda^d$ is the $d$-dimensional Lebesgue measure and we write $\norm{f}_{L^p(\lambda^d)} = (\int_{\R^d} |f|^p\intd{\lambda^d} )^{1/p}$ for the $L^p$-norm of a function $f$ on $\R^d$.

We begin with well-known results on wavelets in $d$ dimensions, see, e.g., the monograph of \cite{benedetto1993wavelets}.
 
\begin{definition}\label{MRA}
Let $\Gamma \subseteq \R^d$ be a lattice, this is a discrete subgroup given by	$( \Gamma, + ) =\left( \left\{ \sum_{i=1}^d a_i v_i: a_i \in \Z \right\}, +\right)$ for certain $v_i \in \R^d$ ($i=1,\ldots,d$). Furthermore, let $M \in \R^{d\times d}$ be a matrix which preserves the lattice $\Gamma$, i.e., $M\Gamma \subseteq \Gamma$ and which is strictly expanding, i.e., all eigenvalues $\zeta$ of $M$ satisfy $|\zeta| > 1$. Denote for such a matrix $M$ the absolute value of its determinant by $|M|$. A multiresolution analysis (MRA) of $L^2\left(\R^d, \cB(\R^d), \lambda^d \right)$, $d \in \N_+$, with a scaling function $\Phi:\R^d\rightarrow\R$ is an increasing sequence of subspaces of $L^2\left(\lambda^d \right)$ given by $\ldots \subseteq U_{-1} \subseteq U_0 \subseteq U_1 \subseteq \ldots $ such that the following four conditions are satisfied
\begin{enumerate}
\item (Denseness) $\bigcup_{j \in \Z} U_j$ is dense in $L^2\left(\lambda^d \right)$,
\vspace{-.5em}
\item (Separation) $\bigcap_{j \in \Z} U_j = \{0\}$,
\vspace{-.5em}
\item (Scaling) $ f\in U_j $ if and only if $f( M^{-j} \,\cdot\,) \in U_0$,
\vspace{-.5em}
\item (Orthonormality) $\{ \Phi(\,\cdot\, - \gamma): \gamma \in \Gamma \}$ is an orthonormal basis of $U_0$.
\end{enumerate}
\end{definition}

The relation between an MRA and an orthonormal basis of $L^2(\lambda^d)$ is summarized by

\begin{theorem}[\cite{strichartz1993wavelets}]\label{BenedettoTheorem}
Suppose $\Phi$ generates a multiresolution analysis and the $a_k(\gamma)$ satisfy for all $0\le j,k \le |M|-1$ and $\gamma\in\Gamma$ the equations
\begin{align*}
	\sum_{\gamma'\in\Gamma} a_j(\gamma')\, a_k(M\gamma + \gamma') = |M|\, \delta(j,k)\, \delta(\gamma,0) \quad\text{ and }\quad \sum_{\gamma\in\Gamma} a_0(\gamma) = |M|,
\end{align*}
where $\delta$ is the Kronecker delta. Furthermore, let the functions $\Psi_k$ be defined by $\sum_{\gamma\in\Gamma} a_k(\gamma)\, \Phi(M\,\cdot\, - \gamma)$ for $k=1,...,|M|-1$. Then the set of functions $\{ |M|^{j/2} \Psi_k( M^j\,\cdot\,-\gamma): j\in\Z, k=1,\ldots,|M|-1, \gamma\in\Gamma \}$ forms an orthonormal basis of $L^2(\lambda^d)$:
\begin{align*}
		&L^2(\lambda^d) = U_0 \oplus \left( \oplus_{j\in\N} W_j \right)= \oplus_{j\in\Z} W_j,\text{ where } W_j \coloneqq \langle\, |M|^{j/2} \Psi_k(M^j\,\cdot\, - \gamma): k=1,\ldots,|M|-1,\gamma\in\Gamma \, \rangle.
\end{align*}
\end{theorem}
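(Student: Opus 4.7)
My strategy is to set $W_j := U_{j+1} \ominus U_j$, establish that $\{\Psi_k(\cdot - \gamma): 1 \le k \le |M|-1,\,\gamma \in \Gamma\}$ is an orthonormal basis of $W_0$, and then transfer this to every $W_j$ via the MRA scaling axiom. The denseness and separation axioms of the MRA yield the Hilbert space decomposition $L^2(\lambda^d) = U_0 \oplus \bigoplus_{j \ge 0} W_j = \bigoplus_{j \in \Z} W_j$ in a standard way. As a preliminary, the scaling axiom combined with the orthonormality of $\{\Phi(\cdot - \gamma):\gamma\in\Gamma\}$ in $U_0$ implies that $\{|M|^{1/2}\Phi(M\cdot - \alpha):\alpha\in\Gamma\}$ is an orthonormal basis of $U_1$, and in particular $\langle \Phi(M\cdot - \alpha), \Phi(M\cdot - \beta)\rangle = |M|^{-1}\delta_{\alpha,\beta}$.

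\textbf{Orthonormality.} Each $\Psi_k(\cdot - \gamma)$ lies in $U_1$ by construction. Expanding in the $U_1$-basis and using the formula above, I compute
\[
\langle \Psi_k(\cdot - \gamma_1), \Psi_\ell(\cdot - \gamma_2)\rangle = |M|^{-1}\sum_{\beta \in \Gamma} a_k(\beta)\,a_\ell\bigl(M(\gamma_1 - \gamma_2) + \beta\bigr).
\]
The hypothesis on the coefficients, applied with $\gamma := \gamma_1 - \gamma_2$, collapses the right-hand side to $\delta_{k,\ell}\,\delta_{\gamma_1,\gamma_2}$ for $k,\ell \ge 1$. Taking $a_0$ to be the refinement coefficients of $\Phi$ in $U_1$ (which exist since $\Phi \in U_0 \subseteq U_1$ and must satisfy $\sum_\gamma|a_0(\gamma)|^2 = |M|$ by the same hypothesis with $j=k=0, \gamma=0$), the identical computation with $\ell = 0$ gives $\langle \Psi_k(\cdot - \gamma_1), \Phi(\cdot - \gamma_2)\rangle = 0$, so $\Psi_k(\cdot - \gamma) \in U_0^{\perp} \cap U_1 = W_0$.

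\textbf{Completeness and conclusion.} This is the step I expect to be the main obstacle. Partition $\Gamma = \bigsqcup_{r=0}^{|M|-1}(M\Gamma + \eta_r)$ into cosets of the sublattice $M\Gamma$ (which has index $|M|$ in $\Gamma$), and introduce the polyphase components $\widehat a_k^{(r)}(\omega) := |M|^{-1/2}\sum_{\mu \in \Gamma} a_k(M\mu + \eta_r)\,e^{-2\pi i \mu \cdot \omega}$. A Fourier transform in the $\gamma$ variable converts the hypothesis on the $a_k$ into row-orthogonality $\sum_{r=0}^{|M|-1}\overline{\widehat a_j^{(r)}(\omega)}\,\widehat a_k^{(r)}(\omega) = \delta_{j,k}$ for a.e.\ $\omega$; that is, the square polyphase matrix $[\widehat a_k^{(r)}(\omega)]_{k,r}$ is unitary. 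Since unitarity also implies column-orthogonality, Plancherel's identity on the torus yields $\sum_{k=0}^{|M|-1}\sum_{\mu \in \Gamma}|a_k(M\mu + \eta_r)|^2 = |M|$ for each coset index $r$. With this in hand I verify Parseval for an arbitrary element of the $U_1$-basis: for $\alpha \in M\Gamma + \eta_r$,
\[
\sum_{k=0}^{|M|-1}\sum_{\gamma \in \Gamma}\bigl|\langle |M|^{1/2}\Phi(M\cdot - \alpha), \psi_{k,\gamma}\rangle\bigr|^2 = |M|^{-1}\sum_{k,\gamma}|a_k(\alpha - M\gamma)|^2 = 1,
\]
with $\psi_{0,\gamma} := \Phi(\cdot - \gamma)$, $\psi_{k,\gamma} := \Psi_k(\cdot - \gamma)$, and the last equality following from the substitution $\alpha - M\gamma = M\mu + \eta_r$ together with column-orthogonality. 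Hence $\{\Phi(\cdot - \gamma)\} \cup \{\Psi_k(\cdot - \gamma): k \ge 1\}$ is a complete orthonormal system in $U_1$, so $\{\Psi_k(\cdot - \gamma): 1 \le k \le |M|-1,\,\gamma \in \Gamma\}$ is an ONB of $W_0$. Applying the scaling axiom of the MRA, which is an $L^2$-isometry up to the factor $|M|^{j/2}$, transfers this to the claimed ONB of $W_j$ for every $j \in \Z$, and combining with the decomposition of $L^2(\lambda^d)$ from the opening paragraph yields both equalities in the theorem.
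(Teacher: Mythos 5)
The paper offers no proof of Theorem~\ref{BenedettoTheorem}: it is quoted from \cite{strichartz1993wavelets} and used as a known result, so there is no internal argument to compare yours against. Judged on its own, your proof is correct and follows the standard route of the cited literature: the displayed relations on the $a_k$ give orthonormality of $\{\Psi_k(\cdot-\gamma)\}$ and its orthogonality to $U_0$; unitarity of the $|M|\times|M|$ polyphase matrix (orthonormal rows a.e., hence orthonormal columns) yields the Parseval identity on the orthonormal basis $\{|M|^{1/2}\Phi(M\cdot-\alpha)\}$ of $U_1$ and therefore completeness in $W_0=U_1\ominus U_0$; the scaling, denseness and separation axioms then telescope this to $L^2(\lambda^d)=U_0\oplus\left(\oplus_{j\ge 0}W_j\right)=\oplus_{j\in\Z}W_j$. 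Two remarks. First, your reading of $a_0$ as the refinement mask of $\Phi$ is not optional: with an arbitrary sequence $a_0$ satisfying only the displayed relations, the $\Psi_k$ need not be orthogonal to $U_0$, so this identification (implicit in the statement, explicit in Strichartz) should be stated as a hypothesis rather than as a choice you make. Second, the condition $\sum_{\gamma}a_0(\gamma)=|M|$ plays no role in your argument, which is consistent: it is the lowpass normalization and holds automatically for the mask of an MRA scaling function. The remaining technicalities (the masks lie in $\ell^2$ by the $j=k$, $\gamma=0$ relation, which justifies the polyphase functions and the interchange of sums; the Fourier series are taken over $\Gamma\cong\Z^d$) are routine.
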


We also call the scaling function $\Psi_0=\Phi$ the father wavelet. Moreover, we assume throughout the rest of this article that the MRA is given by compactly supported and bounded wavelets $\Psi_k$, $k=0,\ldots,|M|-1$ if not mentioned otherwise. Additionally, we assume that the lattice $\Gamma$ is $\Z^d$. One could also use different lattices which would have a finer grid than $\Z^d$, however, this would also result in more technical complexities and provide little additional insight. Note that the last assumption also implies the eigenvalues of the matrix $M$ to be integers.

W.l.o.g. the support of the wavelets $\Psi_k$ is in $[0,L]^d$ for some $L\in\N_+$, we write $\operatorname{supp} \Psi_k \subseteq [0,L]^d$. The mother wavelets satisfy the balancing condition $\int_{\R^d} \Psi_k \,\intd{\lambda^d} =0$ for $k=1,\ldots,|M|-1$.

One can derive a $d$-dimensional, isotropic MRA from a father wavelet $\phi$ and a mother wavelet $\psi$ which are defined on the real line: assume that $\phi$ and $\psi$ fulfill the scaling equations
\begin{align*}
		\phi \equiv \sqrt{2} \sum_{\gamma\in\Z}  h_\gamma\, \phi( 2 \darg - \gamma) \text{ and } \psi \equiv \sqrt{2} \sum_{\gamma\in\Z} g_\gamma\, \phi (2 \darg -\gamma),
\end{align*}
for real sequences $( h_\gamma: \gamma \in \Z)$ and $(g_\gamma: \gamma\in\Z)$. Let $\phi$ generate an MRA of $L^2(\lambda)$ with the corresponding spaces $U'_j$, $j\in \Z$. The $d$-dimensional wavelets are derived as follows: set $\Gamma \coloneqq \Z^d$ and define the diagonal matrix $M$ as $2 I$, where $I$ is the identity matrix. Denote the mother wavelets as pure tensors by $\Psi_k \coloneqq \xi_{k_1} \otimes \ldots \otimes \xi_{k_d}$ for $k \in \{0,1\}^d\setminus \{0\}$, where $\xi_0 \coloneqq \phi$ and $\xi_1 \coloneqq \psi$. The scaling function is $\Phi \coloneqq \Psi_0 \coloneqq \otimes_{i=1}^{d} \phi$. Then $\Phi$ and the linear spaces $U_j \coloneqq \otimes_{i=1}^d U'_j$ form an MRA of $L^2(\lambda^d)$ and the functions $\Psi_k$, $k\neq 0$, generate an orthonormal basis in that
\begin{align*}
		&L^2(\lambda^d) = U_0 \oplus \left( \oplus_{j\in\N} W_j \right) = \oplus_{j\in\Z} W_j, \text{ where } W_j = \left\langle\, |M|^{j/2} \Psi_k\left( M^j\darg - \gamma \right): \gamma \in \Z^d,\, k \in \{0,1\}^d \setminus \{0\} \, \right\rangle.
\end{align*}

Moreover, we need the characterization of the multivariate Besov space, see \cite{meyer1990ondelettes} or \cite{triebel1992theory}. For that reason we generalize the well-known multivariate notion of the Besov space for isotropic wavelets to nonisotropic wavelets.

\begin{definition}\label{DefBesovSpace}
Let $s > 0$, $p, q \in [1,\infty]$ and let $\{ \Psi_0,\ldots, \Psi_{|M|-1} \}$ be a wavelet basis. Set $	\Phi_{j,\gamma} \coloneqq \Psi_{0,j,\gamma} \coloneqq |M|^{j/2} \, \Phi ( M^j \,\cdot\, - \gamma )$	for the father wavelets and write $\Psi_{k,j,\gamma} \coloneqq |M|^{j/2} \, \Psi_k( M^{j}\,\cdot\, - \gamma)$ for the mother wavelets for $k=1,\ldots,|M|-1$, $j\in \Z$ and $\gamma\in\Z^d$. The Besov space $B^s_{p,q}(\R^d)$ is defined as
\begin{align*}
	B^s_{p,q}(\R^d) &\coloneqq \left\{ f: \R^d \rightarrow \R, \text{ there is a wavelet representation }  \right .  \\
	& \qquad \qquad \left. f = \sum_{\gamma\in\Z^d} \theta_{0, \gamma}\, \Phi_{0,\gamma} + \sum_{k=1}^{|M|-1} \sum_{j \ge 0} \sum_{\gamma\in\Z^d} \upsilon_{k,j,\gamma}\, \Psi_{k,j,\gamma} \text{ such that } \norm{f}_{B^s_{p,q}} < \infty \right\},
\end{align*}
where the Besov norm of $f$ (with the usual modification if $p=\infty$ or $q=\infty$) is
\begin{align}\begin{split}\label{GeneralizedBesovNorm}
		\norm{f}_{B^s_{p,q} } &\coloneqq \norm{\sum_{\gamma\in\Z^d} \theta_{0, \gamma}\, \Phi_{0,\gamma} }_{L^p(\lambda^d)} + \left( \sum_{k=1}^{|M|-1} \sum_{j \ge 0} |M|^{jsq} \norm{ \sum_{\gamma\in\Z^d} \upsilon_{k,j,\gamma} \, \Psi_{k,j,\gamma} }_{L^p(\lambda^d)}^q		\right)^{1/q}.
\end{split}
\end{align}
Furthermore, denote the $\ell^p$-sequence norm by $\norm{\,\cdot\,}_{\ell^p}$ and define the equivalent norms (see Lemma~\ref{BesovEquivalence})
\begin{align}\label{GeneralizedBesovNorm2}
		\norm{f}_{s,p,q} \coloneqq \norm{ \theta_{0, \cdot}  }_{\ell^p } + \left( \sum_{k=1}^{|M|-1} \sum_{j \ge 0} |M|^{j(s+1/2-1/p)q} \norm{  \upsilon_{k,j,\cdot} }_{\ell^p}^q		\right)^{1/q}.
\end{align}

In the following, $M$ is a diagonalizable matrix, $M = S^{-1} D S$, where $D$ is a diagonal matrix containing the eigenvalues of $M$. Denote the maximum of the absolute values of the eigenvalues by $\zeta_{max} \coloneqq \max\{ |\zeta_i| : i=1,\ldots, d \}$ and the corresponding minimum by $\zeta_{min} \coloneqq \min\{ |\zeta_i| : i=1,\ldots, d \}$.

Similar to the case in one dimension, we have the following relations between different Besov spaces for multivariate functions, cf. \cite{donoho1996}.
\begin{enumerate}
		\item If either $s'>s$ and $q=q'$ or if $s'=s$ and $q'\le q$, then $B^{s'}_{p,q'} \subseteq  B^s_{p,q}$. Moreover, if $p'\le p$ and $s'=s-p^{-1}+(p')^{-1}$, then $B^{s'}_{p',q} \subseteq B^s_{p,q}$.
		
	\item If $s'=s-p^{-1}>0$, then $B^s_{p,q} \subseteq B^s_{p,\infty} \subseteq B^{s'}_{\infty,\infty}$.
	
	\item  Furthermore, if a function is H{\"o}lder continuous with exponent $0<r\le 1$, we see in the following that this function belongs to the Besov space $B^s_{\infty,\infty}$, where the regularity parameter $s$ is given by $r \ln \zeta_{min} / (d \zeta_{max}) $. In particular, in the one-dimensional case $s$ equals $r$, otherwise it is strictly smaller.
	
\end{enumerate}

Moreover, a wavelet is $r$-regular if every derivative up to order $r\in\N_+$ is rapidly decreasing. In the one-dimensional case, this regularity ensures that the characterization of the Besov norms via the wavelet coefficients as in \eqref{GeneralizedBesovNorm} and \eqref{GeneralizedBesovNorm2} is equivalent to the characterization via the modulus of smoothness, compare \cite{Lemarie1986} and \cite{donoho1997universal}.
In the one-dimensional case and for $r$-regular wavelets, the Besov spaces also include the Sobolev spaces $H^s = B^s_{2,2}$. Similar considerations remain true (at least) in the special case of isotropic wavelets. For more details, we refer the reader to \cite{meyer1990ondelettes} and \cite{haroske2005wavelet}.
We do not consider such equivalent characterizations for a general matrix $M$ in the following but leave this issue up to further research.

For the density estimation problem, we define for $K \in \R_+$, $A\in \cB(\R^d)$ and $d\in\N_+$ subsets of $B^s_{p,q}$  as follows
\begin{align*}
			&F_{s,p,q}(K,A) \coloneqq \left\{f:\R^d \rightarrow\R_{\ge 0}, f \in B^s_{p,q}(\R^d), \int_{\R^d} f \intd{\lambda^d} = 1, \norm{f}_{s,p,q} \le K, \supp{f} \subseteq A 		\right\}.
\end{align*}
\end{definition}

If the wavelets $\Psi_k$ have compact support and if $s-1/p > 0$, then it is straightforward to show that finiteness of $f$ w.r.t.\ the Besov norm implies that the function is essentially bounded by $\norm{f}_{s,p,q}$ times a constant. In particular, if $f$ is a density such that $\norm{f}_{s,p,q} < \infty$ and $s>1/p$, then $f$ is square integrable.

In the statements below, the notation $|M|^{j} \simeq g(n)$ means that the integer $j$ is chosen as a function of $n$ such that $|M|^j \le g(n) < |M|^{j+1}$.

We denote the $p$-norm on $\R^N$ (resp. $\R^d$) by $\norm{\,\cdot\,}_{p}$ and the corresponding metric by $d_{p}$ for $p\in[1,\infty]$ with the extension $d_p(I,J) \coloneqq \inf\{	d_p(s,t), s\in I, t\in J	\}$ to subsets $I,J$ of $\R^N$ (resp. $\R^d$). Write $s \le t$ for $s,t \in \R^N$ if and only if for each $1\le i \le N$ the single coordinates satisfy $s_i \le t_i$. We denote the indicator function of a set $A$ by $\I\{A\}$. For $a\in\R$ we write $a^+ \coloneqq \max(a,0)$ for the positive and $a^- \coloneqq \max(-a,0)$ for the negative part. Furthermore, we write $e_N \coloneqq (1,\ldots,1)\in\Z^N$ for the vector whose elements are all equal to one. If $a,b\in \R^d$ are such that $a\le b$, then we denote the cube $\{x\in\R^d: a\le x \le b\}$ by $[a,b]$.

We call a function $h: \R^d \rightarrow \R$ radial if $h(x) = h(y)$ whenever $\norm{x}_2 = \norm{y}_2$. A radial function $h$ is non-increasing if $h(x) \le h(y)$ whenever $\norm{x}_2 \ge \norm{y}_2$.

In the next step, we describe the data generating process which is given by a $d$-dimensional random field $Z$. This random field is defined on an $N$-dimensional lattice structure, i.e., $Z = \{ Z(s): s \in \Z^N \}$ ($N\ge 1$). The random variables $Z(s)$ are identically distributed on $\R^d$ and their distribution admits a density $f$.

Denote for a subset $I$ the $\sigma$-algebra generated by the $Z(s)$ in $I$ by $\cF(I) = \sigma( Z(s): s\in I)$. The $\alpha$-mixing coefficient is introduced in \cite{rosenblatt1956central}; in the present context it is defined for $k\in\N$ as
\begin{align*}
	\alpha(k) \coloneqq \sup_{ \substack{I, J \subseteq \Z^N,\\ \dzn{I,J}\ge k }} \sup_{ \substack{ A \in \cF(I),\\ B \in \cF(J) } } \left| \p(A\cap B)-\p(A)\p(B)		\right|.
\end{align*}
We say that the random field is strongly spatial mixing if $\alpha(k) \rightarrow 0$ for $k \rightarrow \infty$. \cite{bradley2005basicMixing} gives an introduction to dependence measures for random variables. In the following, we require
\begin{condition}\label{regCond0}
Assume that $N\in \N_+$. $Z\coloneqq\{Z(s): s\in \Z^N \}$ is an $\R^d$-valued random field. The random variables $Z(s)$ are identically distributed and admit a bounded density $f$ w.r.t.\ the Lebesgue measure. Furthermore,

\begin{enumerate}[label=\textnormal{(\arabic*)}]

\item $Z$ is strongly spatial mixing with exponentially decreasing mixing coefficients, i.e., $\alpha(k) \le c_0 \exp( -c_1 \, k)$ for all $k\in\N_+$ for certain $c_0,c_1\in\R_+$. \label{Cond_Mixing}
\vspace{-.5em}
\item  Define the index sets by $I_{n} \coloneqq \{ s\in \Z^N: e_N\le s\le n\} \subseteq \N_+^N$ for $n\in\N^N$. All index sets considered in the following satisfy $\min\{ n_i: 1\le i \le N\} \ge C' \max\{ n_i: 1\le i\le N\}$ for a fixed constant $C'\in\R_+$. 
	\label{Cond_Sequence}
	
\item \label{DensityRequirement}
Let $a\in\N_+$ and denote the joint density of $Z(s)$ and $Z(t)$ by $f_{Z(s),Z(t)}$. There are two bounded and non-increasing radial functions $h, \tilde{h}: \R^d\to \R_{\ge 0}$ such that $f \le h$ and $|f_{Z(s),Z(t)}(z_1,z_2) - f(z_1)f(z_2) | \le \tilde{h}(z_1)\tilde{h}(z_2)$ for all $Z(s),Z(t)$, $s,t\in\Z^N$. Moreover,
$$
  \norm{h^{1/(2a)} }_{L^1(\lambda^d)}  < \infty \text{ and } \norm{ \tilde{h}^{1/a} }_{L^2(\lambda^d)}  < \infty.
$$

\end{enumerate}
\end{condition}

The assumption of exponentially decreasing $\alpha$-mixing coefficients is common, cf. \cite{LiWavelets}. One can show that such a rate is guaranteed for time series under mild conditions, cf. \cite{Withers1981} or \cite{davydov1973mixing}. 

The requirement on the constant $C'$ is technical. If we consider a sequence $(n(k):k\in\N) \subseteq \N^N$, where one coordinate $n_i(k)$ tends to infinity, then all other coordinates tend to infinity as well. This will also prove helpful in the following results, where we express the rates of convergence of the estimators in terms of the cardinality of the index set $I_n$, which is $|I_n|=\prod_{i=1}^N n_i$. For instance, if we obtain a rate of convergence in $\cO( |I_n|^{-\rho} )$ for a certain $\rho>0$, then this also means in terms of a single coordinate $i$ that the rate is in $\cO( n_i^{-N\rho})$. This reminds more of the case of i.i.d.\ or time series data, where we usually have observations $Z_1,\ldots,Z_n$. We do not require for our results an asymptotic on the index sets of the kind $I_{n(k)}\subseteq I_{n(k+1)}$ for a sequence $(n(k))$ in $\N^N$, we only require that all ratios $n_i / n_j$ are at least $C'$.

The condition on the function and the joint densities of the variables $Z(s)$ and $Z(t)$ is technical. The fact that the density $f$ is dominated by a radial function $h$, which satisfies certain integrability conditions, ensures that the tail of the density is well behaved. This is necessary for density functions with an unbounded support. If the density function is bounded and has bounded support, this condition is trivially satisfied.

The second requirement on the joint distribution of the random variables $Z(s)$ and $Z(t)$ restricts the mutual dependence, i.e., $f_{Z(s),Z(t)}(z_1,z_2) \le f(z_1)f(z_2) + \tilde{h}(z_1) \tilde{h}(z_2)$ for another radial function $\tilde{h}$ which satisfies certain integrability conditions. If $Z$ is strictly stationary this condition reduces to the joint densities of the pairs $( Z(0), Z(s) )$. Moreover, as the mixing coefficients vanish with increasing distance, we expect the dominating function $\tilde{h}$ to be determined by the pairs $(Z(s),Z(t))$ where $\norm{s-t}_\infty$ is small.

We can now define the density estimators. The density $f$ has the representation
\[
		f = \sum_{\gamma\in\Z^d} \theta_{0,\gamma}\,\Phi_{0,\gamma} + \sum_{k=1}^{|M|-1} \sum_{j = 0}^{\infty} \sum_{\gamma\in\Z^d} \upsilon_{k,j,\gamma} \,\Psi_{k,j,\gamma},  \text{ where } \theta_{j,\gamma} \coloneqq \scalar{f}{ \Phi_{j,\gamma} } \text{ and } \upsilon_{k,j,\gamma} \coloneqq \scalar{f}{ \Psi_{k,j,\gamma} }.
\]
Define the $j$-th approximation of $f$ by	$P_j f \coloneqq \sum_{ \gamma\in \Z^d } \theta_{j,\gamma} \, \Phi_{j,\gamma}$ for $j\ge 0$. Denote the linear estimator of $f$ given the sample $\{Z(s): s\in I_n\}$ by
\begin{align}
		\tilde{P}_j f &\coloneqq \sum_{ \gamma\in \Z^d } \hat{\theta}_{j,\gamma}\, \Phi_{j,\gamma}, \text{ where } \hat{\theta}_{j,\gamma} \coloneqq  |I_{n}|^{-1} \sum_{s\in I_{n}}\, \Phi_{j,\gamma}(Z(s)). \label{MRAApproxEmp}
\end{align}
The hard thresholding estimator is defined for two levels $0\le j_0 \le j_1$ and a thresholding sequence $(\bar{\lambda}_j:j\in\N)\subseteq \R_+$ as follows
\begin{align}\begin{split}
		\tilde{Q}_{j_0,j_1} f &\coloneqq \sum_{\gamma\in\Z^d}  \hat{\theta}_{j_0,\gamma}\,  \Phi_{j_0,\gamma} + \sum_{k=1}^{|M|-1} \sum_{j = j_0}^{j_1-1} \sum_{\gamma\in\Z^d}  \hat{\upsilon }_{k,j,\gamma}\; \1{  |\hat{\upsilon }_{k,j,\gamma} | > \bar{\lambda}_{j} }\;  \Psi_{k,j,\gamma},  \label{MRAApproxEmpHard}
\end{split}\end{align}
where $\hat{\upsilon }_{k,j,\gamma} \coloneqq  |I_{n}|^{-1} \sum_{s\in I_{n}}\, \Psi_{k,j,\gamma} (Z(s))$. Hence, $\tilde{Q}_{j_0,j_1} f$ consists of a linear estimator w.r.t.\ the coarse level $j_0$ and nonlinear terms of higher frequencies which are added to allow for more details if these are significantly different from zero. This also allows the approximation error and the estimation error of the estimator to vanish at higher rates than the linear estimator for certain parameter constellations, we encounter this below when presenting the results.

As $\tilde{P}_j f$ and $\tilde{Q}_{j_0,j_1} f$ are not necessarily a probability density, one can additionally consider the normalized estimator. We refer to Appendix~\ref{Appendix_QuestionOfNormalization} for this question.

\section{Linear and hard thresholded wavelet density estimation}\label{WaveletResults}
In this section we study wavelet density estimators for $d$-dimensional data. \cite{KellyWaveletExpansions} show that for isotropic wavelets and $f\in L^{p'}(\lambda^d)$ ($1\le p' <\infty$) the approximation bias vanishes, $\norm{f-P_j f}_{L^{p'}(\lambda^d)} \rightarrow 0$ as $j\rightarrow \infty$. In the case $p'=\infty$ it is not guaranteed that the approximation error vanishes for general elements from $L^{p'}$: for instance, consider the one-dimensional Haar mother wavelet $\psi \coloneqq \I\{[0,1/2) \} - \I\{[1/2,1)\}$ and construct with it the density $f \coloneqq \I\{[0,1)\} + \sum_{j=0}^{\infty} \psi\left(2^{j+1}x - (2^{j+1}-2) \right)$ on the unit interval $[0,1]$. $f$ jumps between 0 and 1 and these jumps become quite erratic as $x$ tends to 1. In particular, the projection $P_j f$ onto $U_j$ cannot capture all jumps. Hence, we have $\liminf_{j\rightarrow \infty} \norm{f-P_j f}_{\infty} \ge \frac{1}{2} > 0$ and the approximation property fails in this case. However, if $f$ is a Besov density in $B^{s}_{p,q}(\R^d)$, we can derive for general admissible matrices $M$ a rate of convergence.

We begin with the linear estimator, the technique of the proof is based on the ideas of \cite{kerkyacharian1992density} who consider the case for one-dimensional i.i.d.\ data. 

\begin{theorem}[Rate of convergence of the linear estimator]\label{LinearDensityEstimationBesovFunction}
Let $p' \in [1,\infty)$, $p,q\in [1,\infty]$ and $s >1/p$. Define $s'\coloneqq s + (1/p' - 1/p) \wedge 0$. Let $A\in\cB(\R^d)$ and if $p'<p$, assume moreover that $A$ is bounded. 

Let $f \in F_{s,p,q}(K,A)$ for some $K\in\R_+$. If $p'\in[1,2]$, assume moreover that Condition~\ref{regCond0}~\ref{DensityRequirement} is satisfied with $a=1$. If $p'\in(2,\infty)$, assume furthermore that Condition~\ref{regCond0}~\ref{DensityRequirement} is satisfied with $a=2$. Let the level $j$ grow at a rate of $	|M|^j \simeq	|I_n|^{1/ (2s' + 1) } $. Then 
$
		\E{ \int_{\R^d} |f - \tilde{P}_j f |^{p'}}^{1/p'} \le	C_1 \, |I_n|^{- s'/(2s'+ 1) }
$
for suitable constant $C_1\in\R_+$.

Moreover, if the domain $A$ is bounded and $K>0$ is a fixed constant, then
\begin{align}\label{LinearWaveletEstimationRates}
	 \sup_{f \in F_{s,p,q}(K,A) } \E{ \int_{\R^d} |f - \tilde{P}_j f |^{p'}}^{1/p'} &\le 
	C_2 \, |I_n|^{- s'/(2s'+ 1) }
\end{align}
for all dependence structures of the random field $Z$, which satisfy $|f_{Z(s),Z(t)}(z_1,z_2) -f(z_1)f(z_2)| \le h_0$ for all $s,t\in\Z^N$ for some fixed $h_0\in\R_+$.

The constants $C_1, C_2$ depend on the wavelets $\Psi_k$ ($k=0,\ldots,|M|$), the matrix $M$, the bound on the mixing rates, the domain $A$, the index $p'$. $C_1$ depends additionally on the functions $h$ and $\tilde{h}$. $C_2$ depends additionally on $K$.
\end{theorem}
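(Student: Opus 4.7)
The plan follows the Kerkyacharian--Picard strategy invoked in the excerpt, adapted to the spatial, multi-dimensional, and possibly non-isotropic wavelet setting. The basic step is the bias--variance split
\[
\norm{f-\tilde P_j f}_{L^{p'}(\lambda^d)} \;\le\; \norm{f-P_j f}_{L^{p'}(\lambda^d)} + \norm{P_j f-\tilde P_j f}_{L^{p'}(\lambda^d)},
\]
with the two pieces bounded separately and balanced through the prescribed scaling $|M|^j \simeq |I_n|^{1/(2s'+1)}$.

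\textbf{Approximation term.} I would reduce to $\norm{f-P_j f}_{L^{p'}(\lambda^d)} \le C\,K\,|M|^{-js'}$. For $p'\le p$, boundedness of $A$ together with the compact support of $\Phi$ confines $f-P_j f$ to a bounded enlargement of $A$, so H\"older's inequality transfers $L^p$-control into $L^{p'}$-control with no loss of smoothness ($s'=s$), and the standard MRA approximation estimate then falls out of the Besov norm characterisation \eqref{GeneralizedBesovNorm}. For $p'>p$, I apply the embedding $B^s_{p,q}\subseteq B^{s'}_{p',q}$ with $s'=s+1/p'-1/p$ recorded in the excerpt and use the same MRA approximation estimate directly in $L^{p'}$.

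\textbf{Stochastic term.} Writing $P_j f-\tilde P_j f=\sum_{\gamma\in\Z^d}(\theta_{j,\gamma}-\hat\theta_{j,\gamma})\,\Phi_{j,\gamma}$ and using the equivalence of the $L^{p'}$-norm on $U_j$ with the weighted $\ell^{p'}$-norm of coefficients (the $k=0$ piece of \eqref{GeneralizedBesovNorm2}) yields
\[
\E{\norm{P_j f-\tilde P_j f}_{L^{p'}(\lambda^d)}^{p'}} \;\le\; C\,|M|^{jp'(1/2-1/p')}\sum_\gamma\E{|\hat\theta_{j,\gamma}-\theta_{j,\gamma}|^{p'}}.
\]
Each $\hat\theta_{j,\gamma}$ is a spatial mean of bounded variables $\Phi_{j,\gamma}(Z(s))$. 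For $p'\in[1,2]$ Jensen reduces the individual moment to a variance, which the exponential $\alpha$-mixing hypothesis and Condition~\ref{regCond0}~\ref{DensityRequirement} ($a=1$) bound by $C/|I_n|$: the diagonal uses $\E{\Phi_{j,\gamma}(Z(s))^2}\le\norm{h}_\infty$, and the covariance contributions are tamed by the envelope $\tilde h(z_1)\tilde h(z_2)$ together with a Davydov-type summation of $\alpha(k)$. For $p'>2$ I invoke the Rosenthal-type moment inequality for $\alpha$-mixing sums in Appendix~\ref{Appendix_Exponential inequalities}, which contributes the expected variance scale $|I_n|^{-p'/2}$ together with a remainder of order $|I_n|^{1-p'}\norm{\Phi_{j,\gamma}}_\infty^{p'}\simeq |I_n|^{1-p'}|M|^{jp'/2}$; here the stronger integrability case $a=2$ of Condition~\ref{regCond0}~\ref{DensityRequirement} is exactly what keeps the sum over the (potentially infinite) $\gamma\in\Z^d$ under control via $h$ and $\tilde h$, while on bounded $A$ the effective number of $\gamma$ is $O(|M|^j)$ and no tail argument is needed.

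\textbf{Balancing and uniformity.} Putting the two pieces together,
\[
\E{\int_{\R^d}|f-\tilde P_j f|^{p'}\,\intd{\lambda^d}}^{1/p'} \;\le\; C_1\,|M|^{-js'} + C_2\,(|M|^j/|I_n|)^{1/2},
\]
and $|M|^j\simeq|I_n|^{1/(2s'+1)}$ equates the two terms and produces the claimed $|I_n|^{-s'/(2s'+1)}$. The uniform bound \eqref{LinearWaveletEstimationRates} then follows because on bounded $A$ with a uniform $h_0$ on $|f_{Z(s),Z(t)}-f\otimes f|$ every constant becomes an explicit function of $K$, $h_0$ and $|A|$ alone. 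The main obstacle I expect is the stochastic term when $p'>2$: producing a moment inequality for spatial $\alpha$-mixing sums that is sharp simultaneously in the variance scale $1/|I_n|$ and in the maximal-scale factor $\norm{\Phi_{j,\gamma}}_\infty\simeq|M|^{j/2}$, while keeping the resulting sum over the infinite lattice $\Z^d$ summable through the radial envelopes---this interplay is precisely what forces the dichotomy $a=1$ versus $a=2$ in the hypotheses.
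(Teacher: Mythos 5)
Your overall architecture --- the bias/variance split, the $(2L+1)^d$ overlap reduction to coefficient moments, the Besov bias bound distinguishing $p'\ge p$ from $p'<p$ (with bounded $A$), and the balancing $|M|^j\simeq|I_n|^{1/(2s'+1)}$ --- coincides with the paper's, and your treatment of the approximation term is sound. The genuine gap is in the stochastic term when $\supp f$ is unbounded, which the theorem allows whenever $p'\ge p$. For $p'\in[1,2]$ you bound each variance $\E{|\hat{\theta}_{j,\gamma}-\theta_{j,\gamma}|^2}$ by $C/|I_n|$ \emph{uniformly} in $\gamma$ (diagonal via $\norm{h}_\infty$) and then sum over $\gamma\in\Z^d$: that sum is infinite unless $A$ is bounded, because the uniform bound discards exactly the decay in $\gamma$ that makes it converge. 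The paper's Proposition~\ref{IntegrabilityForDependentSums} keeps this decay by localizing, $\E{\Phi_{j,\gamma}(Z(s))^2}\le\norm{\Phi}_\infty^2\int \1{\supp\Phi(\cdot-\gamma)}\,h(M^{-j}y)\intd{y}$, and uses $\norm{h^{1/2}}_{1}<\infty$, $\norm{\tilde h}_{2}<\infty$ to obtain $\sum_\gamma\E{|\hat{\theta}_{j,\gamma}-\theta_{j,\gamma}|^2}\le C\,|M|^j/|I_n|$. Note also that for $p'<2$ you cannot argue coefficientwise via Jensen and then sum, since $\sum_\gamma a_\gamma^{p'/2}$ is not controlled by $(\sum_\gamma a_\gamma)^{p'/2}$, and the localized version of your argument would need integrability of $\tilde h^{p'}$, which is not assumed; the paper instead dominates the $L^{p'}$ estimation error by its $L^2$ counterpart and works at the level $p'=2$, where the summed-variance bound applies directly.

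For $p'>2$, the appendix contains no Rosenthal inequality: it provides a Bernstein-type exponential inequality (Lemma~\ref{LemmaExpInequalityWavelets}) and the derived moment bound of Lemma~\ref{LemmaMomentInequalityWavelets}, whose bound carries a $(B\ln|I_n|)^{N}$ blocking factor and a mixing-tail term $|M|^{jq/2}|I_n|^{-c_1B}$ that must be (and in the paper are) checked to be negligible under $|M|^j\simeq|I_n|^{1/(2s'+1)}$ for $B$ large. More importantly, the paper does not sum a per-coefficient $p'$-th moment over $\gamma$; it splits $\E{|\hat{\theta}_{j,\gamma}-\theta_{j,\gamma}|^{p'}}\le\E{|\hat{\theta}_{j,\gamma}-\theta_{j,\gamma}|^{2}}^{1/2}\E{|\hat{\theta}_{j,\gamma}-\theta_{j,\gamma}|^{2(p'-1)}}^{1/2}$, bounds the high moment uniformly in $\gamma$, and controls $\sum_\gamma\E{|\hat{\theta}_{j,\gamma}-\theta_{j,\gamma}|^{2}}^{1/2}\le C\,|M|^{j}/|I_n|^{1/2}$ by the $a=2$ case of Proposition~\ref{IntegrabilityForDependentSums}. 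This is the concrete mechanism behind your one-line claim that $a=2$ ``keeps the sum over $\gamma$ under control''; with your Rosenthal-type route you would have to supply an analogous envelope-summation argument for both the variance-scale term and the $p'$-th-moment remainder. On bounded $A$ your counting of $O(|M|^j)$ active coefficients does make your version work, but the unbounded-support case is precisely where Condition~\ref{regCond0}~\ref{DensityRequirement} is needed and where your sketch, as written, breaks down.
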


\cite{kerkyacharian1992density} obtain with similar requirements for a real-valued sample i.i.d.\ sample $Z_1,\ldots,Z_n$ a rate of convergence which is in $\cO( n^{-s'/(2s'+1) } )$. 

\cite{bouzebda2015multivariate} study the linear wavelet estimator for multivariate time series $Z_1,\ldots,Z_n$ in the supremum norm and in the case where $M=2I$. They obtain a bound of $\cO(	2^{j d/2 } (\ln n)^{1/2}	/ n^{1/2} + 2^{-jd \rho } )$ for a level $j$ which depends on $n$. The first term is the estimation error, the second term is the approximation error (bias); this error depends on a certain smoothness parameter $\rho$. Thus, their rates are quite similar to our result in particular if we compare it to the intermediate result from Theorem~\ref{LpConvergence} in Section~\ref{Proofs_of_the_main_theorems} which considers the estimation error of the linear estimator.

Hence, when compared to the one-dimensional i.i.d.\ situation, we see that the estimate with a strongly mixing $d$-dimensional sample achieves the analogue rate which is $|I_n|^{-s'/(2s'+1)}$.

The data dimension $d$ is relevant for the rate of convergence, however, this is not shown in the previous theorem. We highlight this fact in the next two results which show that the dimension $d$ has a negative impact on the Besov parameter $s$ which controls the decay of the coefficients $\upsilon_{k,j,\gamma}$ as $j$ tends to infinity. We demonstrate that the classical inclusions shift slightly when moving from the one-dimensional to the $d$-dimensional Besov space: consider an $(A,r)$-H{\"o}lder continuous function w.r.t.\ the 2-norm, i.e.,	$|f(x)-f(y)| \le A \norm{x-y}_{2}^r$ for all $x,y\in\R^d$ for some $0<A<\infty$. In the one-dimensional case we have that $f$ belongs to the space $B^r_{\infty,\infty}$, i.e., that $\norm{f}_{r,\infty,\infty}$ is finite, see, e.g., \cite{donoho1996}.

However, in the multivariate case, we find that such a function can be embedded only in a Besov space $B^s_{\infty,\infty}$ for an $s<r$ which yields a slower rate of convergence. Consider a wavelet coefficient of $f$:
\begin{align*}
		|\upsilon_{k,j,\gamma}| &\le \left| \int_{\R^d} (f(x)-f(x_0)) \Psi_{k,j,\gamma}(x)\,\intd{x} \right| + |f(x_0)| \,  \left| \int_{\R^d} \Psi_{k,j,\gamma}(x)\,\intd{x} \right|  \\
		&\qquad\qquad\qquad\le \sup\left\{ |f(x)-f(x_0)| : x\in\supp{ \Psi_{k,j,\gamma} } \right\} \; |M|^{-j/2} \norm{\Psi_k }_1 \nonumber \\
		&\qquad\qquad\qquad\qquad\qquad\qquad\le A \left(L\sqrt{d} \norm{M^{-j}}_2 \right)^r \; |M|^{-j/2} \norm{\Psi_k}_1 \le C\, (\zeta_{min})^{-jr} \, |M|^{-j/2},
\end{align*}
where $\supp{\Psi_k} \subseteq [0,L]^d$ and the point $x_0 \in \supp{ \Psi_{k,j,\gamma} }$ is in the support of $\Psi_{k,j,\gamma}$ and $C\in\R_+$ is a suitable constant. Hence, we have for the $\norm{\,\cdot\,}_{s,\infty,\infty}$-norm of $f$ if $p=q=\infty$:
\[
		\sup_{k,j,\gamma}  |M|^{j(s+1/2)} \, |\upsilon_{k,j,\gamma}|  \le C \sup_{j} \, (\zeta_{max})^{jsd} \, (\zeta_{min})^{-jr} < \infty \text{ if }  s \le \frac{r}{d} \frac{ \ln \zeta_{min} }{ \ln \zeta_{max} }.
\]
Hence, if $f$ is an $(A,r)$-H{\"o}lder density and $s = r \ln \zeta_{min} / (d \ln \zeta_{max} ) \le r $, then $\norm{f}_{s,\infty,\infty} < \infty$. We see that the difference in the eigenvalues $\zeta_{min}$ and $\zeta_{max}$ has little impact as it only enters with the logarithm. However, far more relevant is the data dimension $d$ which scales the regularity parameter with its inverse $d^{-1}$.

One finds in simple examples that the bound of the first inequality is sharp: indeed, consider the Lipschitz function which is non-constant in only one coordinate, $f(x) \coloneqq x_1$ and use an MRA given by isotropic Haar wavelets. In this case, one computes
\[
		\sup_{k,j,\gamma} |M|^{j(s+1/2)} |\upsilon_{k,j,\gamma}| = \sup_{j} 2^{j(ds-1)} / 4 < \infty \text{ if and only if } s \le 1/d.
\]

Using this insight, we can formulate two statements which also reveal that with increasing data dimension $d$ the rate of convergence deteriorates.

\begin{corollary}[H{\"o}lderian densities]\label{HolderDensity}
Let $f$ be a compactly supported $d$-dimensional $(A,r)$-H{\"o}lderian density. The linear estimator attains the rate given in Equation~\eqref{LinearWaveletEstimationRates} for the parameter choice $s' = s = r \ln \zeta_{min} / (d \ln \zeta_{max} )$.
\end{corollary}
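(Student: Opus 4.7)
The plan is to derive the corollary as a direct consequence of Theorem~\ref{LinearDensityEstimationBesovFunction} applied with Besov parameters $p = q = \infty$. Once we establish that $f \in F_{s,\infty,\infty}(K,\Omega)$, where $\Omega \coloneqq \supp f$ is the compact support and $s = r\ln\zeta_{min}/(d\ln\zeta_{max})$, the conclusion follows immediately: the smoothness hypothesis $s > 1/p$ reduces to the trivial $s > 0$; since $p' < \infty = p$ we have $(1/p' - 1/p)\wedge 0 = 0$, so $s' = s$; and the uniform estimate~\eqref{LinearWaveletEstimationRates} produces the announced rate $|I_n|^{-s/(2s+1)}$. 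The ambient dependence hypothesis on $Z$ is the same as the one appearing in Theorem~\ref{LinearDensityEstimationBesovFunction}, so nothing additional needs to be checked there.

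The heart of the argument is the wavelet coefficient bound, essentially carried out in the paragraphs preceding the corollary. For a mother wavelet coefficient, pick $x_0 \in \supp\Psi_{k,j,\gamma}$ and use the vanishing moment $\int_{\R^d}\Psi_k\,\intd{\lambda^d} = 0$, the H\"older property, and the diameter estimate $\sup_{x \in \supp\Psi_{k,j,\gamma}}\norm{x - x_0}_2 \le L\sqrt{d}\,\norm{M^{-j}}_2$. The diagonalization $M = S^{-1}DS$ gives $\norm{M^{-j}}_2 \le C\,\zeta_{min}^{-j}$, hence
\[
|\upsilon_{k,j,\gamma}| \le A\bigl(L\sqrt{d}\,\norm{M^{-j}}_2\bigr)^r\,|M|^{-j/2}\,\norm{\Psi_k}_{L^1(\lambda^d)} \le C'\,\zeta_{min}^{-jr}\,|M|^{-j/2},
\]
so that $|M|^{j(s+1/2)}|\upsilon_{k,j,\gamma}| \le C'\,\zeta_{max}^{jsd}\,\zeta_{min}^{-jr}$ remains bounded uniformly in $(j,k,\gamma)$ precisely for $s \le r\ln\zeta_{min}/(d\ln\zeta_{max})$. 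Boundedness of $f$, which is inherited from H\"older continuity on the compact $\Omega$, yields $\sup_{\gamma}|\theta_{0,\gamma}| \le \norm{f}_{\infty}\,\norm{\Phi}_{L^1(\lambda^d)} < \infty$. Substituting both estimates into~\eqref{GeneralizedBesovNorm2} with $p = q = \infty$, the outer sum over $j$ collapses to a supremum and the inner $\ell^p$-norms become suprema over $\gamma$, so $\norm{f}_{s,\infty,\infty} \le K < \infty$.

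Combined with $\supp f \subseteq \Omega$ and $\int_{\R^d} f\,\intd{\lambda^d} = 1$, this gives $f \in F_{s,\infty,\infty}(K,\Omega)$, and~\eqref{LinearWaveletEstimationRates} delivers the claim. I expect no genuine obstacle here; the only delicate point is unpacking the $q = \infty$ case of the Besov norm~\eqref{GeneralizedBesovNorm2}, where the outer $\ell^q$-sum degenerates into a supremum over $j$. Precisely this collapse is what reduces Besov-class membership to the single uniform coefficient bound established above, so the entire argument reduces to plugging the coefficient estimate into Theorem~\ref{LinearDensityEstimationBesovFunction}.
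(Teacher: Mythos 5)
Your proposal is correct and follows essentially the same route as the paper: the coefficient bound $|\upsilon_{k,j,\gamma}| \le C\,\zeta_{min}^{-jr}|M|^{-j/2}$ from the H\"older property and the vanishing moments, which places $f$ in $F_{s,\infty,\infty}(K,A)$ for $s = r\ln\zeta_{min}/(d\ln\zeta_{max})$, is exactly the computation the paper carries out in the paragraphs preceding the corollary, and the conclusion is then the $p=q=\infty$, $p'<p$ (hence $s'=s$, support bounded) case of Theorem~\ref{LinearDensityEstimationBesovFunction}. Your additional remarks on bounding $\sup_\gamma|\theta_{0,\gamma}|$ and on the collapse of the outer $\ell^q$-sum to a supremum when $q=\infty$ are correct and consistent with the paper's argument.
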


The next result also applies to density functions with unbounded support

\begin{theorem}[Differentiable densities]\label{DifferentiableDensity}
Let ${p'}\in [ 1, \infty)$. If $p'\le 2$ (resp. $p'>2$), assume moreover that Condition~\ref{regCond0}~\ref{DensityRequirement} is satisfied with $a=1$ (resp. $a=2$). Additionally, the gradient of $f$ is bounded by a non-increasing radial function $\bar{h}\in L^{p'}(\lambda^d)$, i.e., $\norm{ Df }_2 \le \bar{h} $. Choose 
$
	j  \coloneqq j_0 + \floor*{ ( 2\ln \zeta_{min} + d\ln \zeta_{max} )^{-1} \, \ln |I_n| },
$
for a $j_0\in\N$. Then the linear estimator attains the rate	$ \cO\left( |I_n|^{-\ln \zeta_{\min}/ ( 2 \ln \zeta_{\min} + d \ln \zeta_{\max})}			\right)$.
\end{theorem}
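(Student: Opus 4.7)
My approach is a bias--variance decomposition in $L^{p'}$, controlling each term separately and then selecting $j$ to balance them. Minkowski's inequality gives
\[
\E{ \norm{f-\tilde P_j f}_{L^{p'}}^{p'} }^{1/p'} \le \norm{f - P_j f}_{L^{p'}} + \E{ \norm{\tilde P_j f - P_j f}_{L^{p'}}^{p'} }^{1/p'}.
\]

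For the approximation error I would fix $(k,j',\gamma)$ and use the vanishing moment of $\Psi_k$ to write $\upsilon_{k,j',\gamma} = \int (f(x)-f(x_0))\,\Psi_{k,j',\gamma}(x)\intd{x}$ for any $x_0 \in \supp \Psi_{k,j',\gamma}$. The mean value theorem together with $\norm{Df}_2 \le \bar h$ and $\operatorname{diam}(\supp \Psi_{k,j',\gamma}) \le L\sqrt{d}\,\norm{M^{-j'}}_2 \le C\,\zeta_{min}^{-j'}$ yields $|\upsilon_{k,j',\gamma}| \le C\, \zeta_{min}^{-j'}|M|^{-j'/2}\sup_{y \in \supp \Psi_{k,j',\gamma}} \bar h(y)$. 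The bounded overlap of the supports $\{\supp\Psi_{k,j',\gamma}\}_\gamma$ and $\norm{\Psi_{k,j',\gamma}}_{L^{p'}}^{p'} = |M|^{j'(p'/2-1)}\norm{\Psi_k}_{L^{p'}}^{p'}$ then give
\[
\Bigl\|\sum_\gamma \upsilon_{k,j',\gamma}\Psi_{k,j',\gamma}\Bigr\|_{L^{p'}}^{p'} \le C\, \zeta_{min}^{-j'p'}\,|M|^{-j'} \sum_\gamma \sup_{y \in \supp \Psi_{k,j',\gamma}}\bar h(y)^{p'}.
\]
The radial, non-increasing structure of $\bar h$ lets me dominate each cell-supremum by $\bar h$ evaluated at the point of the cell closest to the origin; binning cells into dyadic annuli and comparing the resulting sum to the integral over the adjacent inner annulus yields $\sum_\gamma \sup_y \bar h(y)^{p'} \le C\,|M|^{j'}\norm{\bar h}_{L^{p'}}^{p'}$. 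Substituting back gives $\|\sum_\gamma \upsilon_{k,j',\gamma}\Psi_{k,j',\gamma}\|_{L^{p'}} \le C\,\zeta_{min}^{-j'}\norm{\bar h}_{L^{p'}}$, and summing the resulting geometric series over $j' \ge j$ (and the finite index $k$) produces the bias bound
\[
\norm{f - P_j f}_{L^{p'}} \le C\,\zeta_{min}^{-j}\norm{\bar h}_{L^{p'}}.
\]

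For the stochastic term I would invoke the $L^{p'}$-bound on the linear wavelet estimator that is established as Theorem~\ref{LpConvergence} in Section~\ref{Proofs_of_the_main_theorems}; under Condition~\ref{regCond0}~\ref{DensityRequirement} with $a=1$ for $p'\le 2$ and $a=2$ for $p'>2$, this delivers $\E{\norm{\tilde P_j f - P_j f}_{L^{p'}}^{p'}}^{1/p'} \le C\,(|M|^j/|I_n|)^{1/2}$. Since the eigenvalues of $M$ are integers, the crude estimate $|M| \le \zeta_{max}^d$ further bounds the estimation error by $C\,(\zeta_{max}^{jd}/|I_n|)^{1/2}$. Balancing $\zeta_{min}^{-j}$ against $(\zeta_{max}^{jd}/|I_n|)^{1/2}$ forces $(\zeta_{min}^{2}\,\zeta_{max}^{d})^j \simeq |I_n|$, i.e.\ $j = j_0 + \floor*{\ln|I_n|/(2\ln\zeta_{min}+d\ln\zeta_{max})}$, which is precisely the prescribed level; the two errors then agree at $|I_n|^{-\ln\zeta_{min}/(2\ln\zeta_{min}+d\ln\zeta_{max})}$, giving the claimed rate.

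The main obstacle is the extension of the H\"olderian bias estimate of Corollary~\ref{HolderDensity} to densities with \emph{unbounded} support. Absent a fixed compact domain, one cannot simply bound $\sum_\gamma \sup\bar h^{p'}$ by the number of non-vanishing coefficients times a uniform constant; the tail of the gradient must be absorbed by an $L^{p'}$ majorant. The non-trivial ingredient is the combinatorial lemma $\sum_\gamma \sup_{\supp\Psi_{k,j',\gamma}}\bar h^{p'} \le C\,|M|^{j'}\norm{\bar h}_{L^{p'}}^{p'}$, for which the radial monotonicity of $\bar h$ is essential: it permits dominating the supremum on each cell by $\bar h$ at the cell's innermost point, so that the sum behaves like a Riemann sum for $\int \bar h^{p'}\intd{\lambda^d}$. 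Without this feature the bias would fail to be finite at the given rate.
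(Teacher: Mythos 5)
Your proposal is correct and follows essentially the same route as the paper: the approximation error is bounded per level by combining the vanishing moment of the mother wavelets, the mean value theorem with $\norm{Df}_2\le\bar h$ and the support diameter $\simeq\norm{M^{-j}}_2\simeq\zeta_{min}^{-j}$, the sum over $\gamma$ being controlled by the same radial-majorant/Riemann-sum comparison the paper uses in Proposition~\ref{IntegrabilityForDependentSums}, and the estimation error is taken from Theorem~\ref{LpConvergence} and balanced at the prescribed level $j$. Your write-up merely makes explicit two steps the paper leaves terse (the $\sum_\gamma\sup\bar h^{p'}\lesssim|M|^{j}\norm{\bar h}_{p'}^{p'}$ lemma and the balancing via $|M|\le\zeta_{max}^d$), so no substantive difference or gap.
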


Next, we study the nonlinear hard thresholding estimator of \cite{donoho1996} who consider this estimator for one-dimensional i.i.d.\ data. It is well-known that hard thresholding preserves the visual appearance of jumps and peaks of the density.
 A short and heuristic motivation for this estimator is as follows: the proof of Theorem~\ref{LinearDensityEstimationBesovFunction} (in particular, Theorem~\ref{LpConvergence} and Equation~\eqref{ApproximationErrorEq}) reveals that the bias (approximation error) of the linear estimator is of order $\cO(|M|^{-js'})$ while the stochastic term (estimation error) is of order $|M|^{j/2} |I_n|^{1/2}$. This cannot be optimal for a density $f\in B^s_{p,q}$ if $p'>p$ because in this case the bias is of the wrong order as $s' < s$, see \cite{donoho1996} for a deeper discussion and more details.  However, if the density $f$ belongs to a Besov space $B^s_{p,q}$, then this restriction entails that many coefficients $\upsilon_{k,j,\gamma}$ are forced to decay at a high rate: in particular, the decay in the $\ell^p$-sequence norms $\norm{\upsilon_{k,j,\cdot}}_{\ell^p}$ has to overcompensate the exponential growth of $|M|^{j(s+1/2-1/p)}$. Consequently, it makes sense to add finer levels of the density (more details) such that the bias is again of the right order and to set insignificant estimates of these details, $\hat{\upsilon}_{k,j,\gamma}$, to zero.

\begin{theorem}[Rate of convergence of the hard thresholding estimator]\label{LPconvergenceHardThresholding}
Let $p' \in [1,\infty)$, $p,q\in [1,\infty]$ and $s >1/p$. Let $A\in\cB(\R^d)$ and if $p'<p$, assume moreover that $A$ is bounded. Let Condition~\ref{regCond0}~\ref{DensityRequirement} be satisfied with $a=4$.

Let $f \in F_{s,p,q}(K,A)$ for some $K\in\R_+$. Set $\bar{\lambda}_j \coloneqq K_0 \sqrt{j/|I_n|}$ for a constant $K_0$ specified in \eqref{hardThresChoiceK0}. Define
$\epsilon = sp - (p' - p)/2$ and $s' = s + (1/p' - 1/p) \wedge 0 $.
 Moreover, define the levels $j_0$ and $j_1$ as
\begin{align*}
 &|M|^{j_0} \simeq |I_n|^{1-2\alpha} \text{ and } |M|^{j_1} \simeq |I_n|^{\alpha/s'}, \text{ where }
	\alpha \coloneqq \begin{cases} 
		\frac{s}{2s+1} &\text{ if } \epsilon \ge 0 \\
		\frac{s'}{2s+1-2/p} &\text{ if } \epsilon < 0.
		\end{cases}
\end{align*}
Then
$
	  \E{ \int_{\R^d} |f - \tilde{Q}_{j_0,j_1} f |^{p'} \intd{\lambda}^d }^{1/p'} \le C_1 |I_n|^{-\alpha} \left( \ln |I_n|\right)^{(3p'-p)/2p' \cdot \1{p' > p }}.
$

Moreover, if the domain $A$ is bounded and $K>0$ is a fixed constant, then
\begin{align*}
	 \sup_{f \in F_{s,p,q}(K,A) } \E{ \int_{\R^d} |f - \tilde{Q}_{j_0,j_1} f |^{p'}}^{1/p'} &\le 
	C_2 \, |I_n|^{-\alpha} \left( \ln |I_n|\right)^{(3p'-p)/2p' \cdot \1{p' > p } }.
\end{align*}
for all dependence structures of the random field $Z$, which satisfy $|f_{Z(s),Z(t)}(z_1,z_2) -f(z_1)f(z_2)| \le h_0$ for all $s,t\in\Z^N$ for some fixed $h_0\in\R_+$.

The constants $C_1, C_2$ depend on the wavelets $\Psi_k$ ($k=0,\ldots,|M|$), the matrix $M$, the bound on the mixing rates, the domain $A$, the index $p'$. $C_1$ depends additionally on the functions $h$ and $\tilde{h}$. $C_2$ depends additionally on $K$.
\end{theorem}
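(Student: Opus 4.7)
The plan is to follow the classical Donoho--Johnstone--Kerkyacharian--Picard route for hard thresholding wavelet density estimation, adapted to the $d$-dimensional nonisotropic MRA and the strongly mixing data generating process. Starting from the telescoping identity
\[
f - \tilde{Q}_{j_0,j_1} f \;=\; (P_{j_0} f - \tilde{P}_{j_0} f) \;+\; \sum_{k=1}^{|M|-1}\sum_{j=j_0}^{j_1-1}\sum_{\gamma\in\Z^d}\bigl[\upsilon_{k,j,\gamma} - \hat{\upsilon}_{k,j,\gamma}\,\1{|\hat{\upsilon}_{k,j,\gamma}|>\bar{\lambda}_j}\bigr]\Psi_{k,j,\gamma} \;+\; (f - P_{j_1} f),
\]
I would take the $L^{p'}$ norm and bound the three pieces separately. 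The coarse linear part $P_{j_0}f-\tilde{P}_{j_0}f$ is handled by the same moment argument underpinning Theorem~\ref{LinearDensityEstimationBesovFunction} (more precisely, the intermediate Theorem~\ref{LpConvergence}), producing a stochastic contribution of order $|M|^{j_0/2}|I_n|^{-1/2}$. The tail bias $f-P_{j_1}f$ is purely deterministic and, via the wavelet characterization of $B^{s}_{p,q}$ in Definition~\ref{DefBesovSpace} together with the embedding $B^{s}_{p,q}\hookrightarrow B^{s'}_{p',q}$ listed after that definition, is of order $\cO(|M|^{-j_1 s'})$.

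The substantive work is the middle, thresholded term. I would apply the standard four-way decomposition of each summand relative to the thresholds $\bar{\lambda}_j/2,\bar{\lambda}_j,2\bar{\lambda}_j$:
\[
\upsilon_{k,j,\gamma} - \hat{\upsilon}_{k,j,\gamma}\,\1{|\hat{\upsilon}_{k,j,\gamma}|>\bar{\lambda}_j} \;=\; R^{bb}_{k,j,\gamma}+R^{bs}_{k,j,\gamma}+R^{sb}_{k,j,\gamma}+R^{ss}_{k,j,\gamma},
\]
for the regimes ``true large / estimate large'', ``true large / estimate small'', ``true small / estimate large'' and ``true small / estimate small''. The cross terms $bs$ and $sb$ both force the deviation $|\hat{\upsilon}_{k,j,\gamma}-\upsilon_{k,j,\gamma}|$ to exceed $\bar{\lambda}_j/2$; here I would invoke the Bernstein-type inequality for exponentially $\alpha$-mixing sums from Appendix~\ref{Appendix_Exponential inequalities} together with the variance bound $\Var\Psi_{k,j,\gamma}(Z(s))\le C\norm{f}_\infty$ and the $L^\infty$-bound $\norm{\Psi_{k,j,\gamma}}_\infty\le C|M|^{j/2}$. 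Choosing $K_0$ in \eqref{hardThresChoiceK0} large enough then yields $\p(|\hat{\upsilon}_{k,j,\gamma}-\upsilon_{k,j,\gamma}|>\bar{\lambda}_j/2)\le|I_n|^{-\kappa}$ for any prescribed $\kappa$, and a Cauchy--Schwarz split of the resulting products requires fourth-moment control of $\hat{\upsilon}_{k,j,\gamma}$, which is exactly why Condition~\ref{regCond0}~\ref{DensityRequirement} is imposed with $a=4$. The $bb$ contribution is bounded as in the linear case, while the $ss$ contribution is deterministic and uses Besov decay of $\norm{\upsilon_{k,j,\cdot}}_{\ell^p}$ on the set where $|\upsilon_{k,j,\gamma}|\le 2\bar{\lambda}_j$; this is the sparsity gain that underlies thresholding.

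Summing the four bounds over $j_0\le j<j_1$ produces the characteristic dense/sparse dichotomy. When $\epsilon = sp-(p'-p)/2\ge 0$ the dominant contribution behaves as in the linear regime with exponent $\alpha=s/(2s+1)$; when $\epsilon<0$ the $ss$ geometric series is dominated by its top level and yields the sparse exponent $\alpha=s'/(2s+1-2/p)$. The prescribed choices $|M|^{j_0}\simeq|I_n|^{1-2\alpha}$ and $|M|^{j_1}\simeq|I_n|^{\alpha/s'}$ are precisely those that equate the remaining bias $|M|^{-j_1 s'}$ with the leading stochastic term and suppress the coarse piece. The logarithmic factor $(\ln|I_n|)^{(3p'-p)/(2p')}$ when $p'>p$ arises from the $\sqrt{j/|I_n|}$ threshold combined with the $j$-sum in the sparse regime, exactly as in the one-dimensional picture of \cite{donoho1996}.

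I expect the main obstacle to be twofold. First, the nonisotropic dilation $M$ rather than $2I$ forces every volume count of translates and every $L^p$/$L^{p'}$ comparison to be tracked through the eigenvalues $\zeta_{\min},\zeta_{\max}$ and through the generalized Besov norm of Definition~\ref{DefBesovSpace}; keeping $s'$ and $\alpha$ correct through this bookkeeping is what turns the univariate template into the stated $d$-dimensional rate. Second, the exponential inequality must be applied uniformly over the $\cO(|M|^{j_1})$ coefficients that can survive thresholding, and when $\supp f$ is unbounded the resulting probability bound must remain summable over $\gamma$; this is the role of the tail controls $\norm{h^{1/(2a)}}_{L^1(\lambda^d)}<\infty$ and $\norm{\tilde{h}^{1/a}}_{L^2(\lambda^d)}<\infty$ in Condition~\ref{regCond0}~\ref{DensityRequirement}, which interact with $K_0$ to produce the required summability. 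The uniform statement on bounded $A$ follows immediately since all constants in the argument depend on $f$ only through $K$, $\norm{f}_\infty$ and the uniform joint-density bound $h_0$.
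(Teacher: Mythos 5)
Your proposal follows essentially the same route as the paper's proof: the same three-part split into coarse linear error (handled via Theorem~\ref{LpConvergence}), tail bias $f-P_{j_1}f$ (handled via the Besov bound \eqref{BesovApproxError}), and the nonlinear details term, which the paper also treats by the standard Donoho--Johnstone--Kerkyacharian--Picard four-way threshold decomposition, with the exponential inequality of Lemma~\ref{LemmaExpInequalityWavelets}, the moment bound of Lemma~\ref{LemmaMomentInequalityWavelets}, a Cauchy--Schwarz split requiring Proposition~\ref{IntegrabilityForDependentSums} with $a=4$, and the choice of $K_0$ in \eqref{hardThresChoiceK0} to make the deviation probabilities polynomially negligible. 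The dense/sparse dichotomy in $\epsilon$, the level choices, the origin of the logarithmic factor, and the uniform statement over bounded $A$ are all as in the paper, so the plan is correct and matches the published argument.
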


Note that the exponent $\alpha$ is smooth in the parameters $s,p,p'$, i.e., for the case $\epsilon = 0$, we could also use the definition of the case where $\epsilon<0$. This also means that the rate of convergence is smooth in the parameter $p'$.

We see that the rates have the identical structure than the rates of \cite{donoho1996} who consider the classical case for a one-dimensional density and i.i.d.\ data. If $p' \le p $, then $\epsilon>0$ and we obtain that $j_1$ and $j_0$ grow at the same rate. So the linear estimator is the preferred choice. If $p' > p$, however, one computes that in each case the exponent $\alpha$ is strictly greater than $s'/(2s'+1)$, the latter is the exponent which determines the rate of the linear estimator. Consequently, the nonlinear estimator performs better in this case. 

\cite{LiWavelets} studies the hard thresholding estimator in the special case $p'=2$ for random fields similar as we do, however, the data are one-dimensional. He obtains for a one-dimensional density $f\in F_{s,p,q}(K,[-A,A])$ a rate in $L^2$ of $\cO( ( \ln |I_n| / |I_n| )^{s/(2s+1)} )$. So our results are a generalization as we do not only consider general $p'$ but also allow for multivariate data and nonisotropic wavelets. Moreover, if $p' \ge  p$, the density function can have an unbounded support .

An alternative to hard thresholding is soft thresholding of the coefficients. Here the absolute value of the estimated coefficients $\nu_{k,j,\gamma }$ undergoes the nonlinear shrinkage process $x\mapsto \operatorname{sgn} x \cdot (x-\delta)_+$ for a certain $\delta>0$.

This procedure can be interpreted as suppressing the noise in the estimated coefficients. Hence, one could also investigate the soft thresholding density estimator in the present setting. Fundamental properties of the soft thresholding method have been investigated by \cite{donoho1997universal}. \cite{delouille2001nonparametric} study the soft and hard thresholding estimator for design-adapted wavelets in nonparametric regression for one-dimensional i.i.d.\ data. They obtain a rate of convergence in $L^2$ which is in $\cO( (\ln n /n)^{r/(2r+1)} )$ if the regression function is H{\"o}lder continuous with exponent $1/2 < r < 1$. This corresponds to our findings for H{\"o}lder continuous densities.

\section{Numerical results}\label{ExamplesOfApplication}

We give an example for the estimation of a two-dimensional density with strongly spatial mixing sample data on a regular two-dimensional lattice. Hence, concerning the parameter choice from the previous sections, we have a lattice dimension $N$ equal to 2 and two-dimensional data, i.e., $d=2$. The section is divided in three parts. Firstly, we describe our decision rule how to choose the tuning parameters $\lambda$, $j_0$ and $j_1$. Secondly, we sketch the process which generates the sample data on the lattice. And finally, we present some numerical results for the estimation of a selected density function.

We follow a simple validation approach in order to choose the tuning parameters. We do not use leave-one out cross-validation because we face a dependent sample and cross-validation could corrupt the inner stochastic structure. In what follows $I_n$ is a finite and rectangular subset of $\N_+^2$. We can construct a graph $G$ from this set if we use the four-nearest-neighborhood structure on $\Z^2$ to construct the edge set. This neighborhood structure will define the dependence between the random variables $\{ Z(s): s\in I_n \}$ which all have the same marginal density $f$. The index set $I_n$ is partitioned into two sets $I_{n,1}$ and $I_{n,2}$. Here we assume that each index set is a connected set w.r.t.\ the four-nearest neighborhood structure.

The density is estimated from the sample data which belongs to the index set $I_{n,1}$, we denote the estimate by $\hat{f}_n $. As in the present example $I_n = I_{(n_1,n_2)} = \{ s: 1\le s_i \le n_i, i=1,2 \}$, we choose $I_{n,1} $ as $\{ s: 1\le s_i \le \floor{0.9 n_i}, i=1,2 \}$. So $I_{n,1}$ is also a rectangular set and the estimator is computed with approximately 80 \% of the data.

In general, the integrated squared error can be decomposed as
\begin{align}\label{DefISE}
			\operatorname{ISE}(f, \hat{f}_n ) &= \int_{\R^d} (\hat{f}_n  - f )^2 \intd{\lambda^d} = \left\{ \int_{\R^d} \hat{f}_n^2 \intd{\lambda^d } - 2 \int_{\R^d} \hat{f}_n \, f \; \intd{\lambda^d } \right\} + \int_{\R^d} f^2 \intd{\lambda^d} \nonumber \\
			&= \operatorname{Ver}(\hat{f}_n, f) + \norm{f}^2_{L^2(\lambda^d)}.
\end{align}
Since in practice the true density function is unknown, it is sufficient for a comparison of density estimates to compute the full validation criterion with the subsample $I_{n,2}$, which is L-shaped in the present example. We define
\begin{align}
		\widehat{\operatorname{Ver}}(\hat{f}_n, f, I_{n,2} ) &\coloneqq \int_{\R^d} \hat{f}_n^2 \intd{\lambda^d } - 2 \frac{1}{|I_{n,2}|} \sum_{s\in I_{n,2}} \hat{f}_n (Z(s)),\label{validationCriterion}
\end{align}
which is the empirical analogue of $\operatorname{Ver}(\hat{f}_n, f)$ as on average we expect only a small dependence between $\hat{f}_n$ and the random variables $Z(s)$ for $s\in I_{n,2}$.

For hard thresholding, we use an approach similar to an algorithm which has been proposed by \cite{hall2001} for the choice of the primary level $j_0$ in the context of cross-validation. The idea is to define a suitable partition $R_1 \cup ... \cup R_S$ of the domain of definition of $\hat{f}_n$ (resp. of $f$), where each $R_k$ collects regions of relatively homogenous roughness. These regions can be determined with a pilot estimator. We compute the validation criterion for each $R_k$ for the levels $j = j_0,\ldots, j_1$ ($j_0 \le j_1$) with the purely linear wavelet estimator $\tilde{P}_j f$ from Equation \eqref{MRAApproxEmp} restricted to $R_k$. Abbreviate the level which minimizes \eqref{validationCriterion} for region $R_k$ by $j_k$. Then choose $j^* \coloneqq \min\{j_k: k=1,\ldots,S\}$ as the primary level.

Moreover, we follow an approach used in \cite{hardle2012wavelets} for the hard thresholding estimator from \eqref{MRAApproxEmpHard} and set each threshold as a multiple of $\max\{ |\hat{\upsilon}_{k,j,\gamma}| :  k=1,\ldots,|M|-1,\gamma\in\Z^d \}$ for $j = j^*,\ldots,j_1$. This multiple is the same for all $j=j^*,\ldots,j_1$. In the following, we refer to these multiples as the threshold, e.g., a threshold 0.1 means that $\bar{\lambda}_j$ is equal to 0.1 times $\max\{ |\hat{\upsilon}_{k,j,\gamma}| :  k=1,\ldots,|M|-1,\gamma\in\Z^d \}$.

Next, we sketch the data generating process. We use the algorithm of \cite{kaiser2012} to simulate five random vectors $Z_1,\ldots,Z_5$. The marginals of each vector are standard normally distributed. The underlying graph is the same for each random vector and is the regular two-dimensional lattice with the four-nearest-neighborhood structure and edge lengths $n_1 = n_2$. We perform the simulation for four different values of $n_1$, namely 20, 35, 50 and 65. So that $|I_n|$ is 400, 1225, 2500 and 4225. 

The dependence within a random vector $Z_i$ is generated as follows. Write $Z$ for this vector for simplicity. If $Z = \{ Z(s): \, s\in I_n \}$ is multivariate normal with expectation $\alpha \in \R^{|I_n|}$ and covariance $\Sigma \in \R^{ |I_n|\times |I_n|}$, then $Z$ has the density
\[
		f_Z( z) = (2\pi)^{-d/2 } \text{det}(\Sigma)^{- 1/2 } \exp \left(	-\frac{1}{2} (z-\alpha)^T \Sigma^{-1} (z-\alpha) \right).
\]
Using the notation $P$ for the precision matrix $\Sigma^{-1}$ and $-s \coloneqq I_n\setminus\{s\}$, we have
\[
		Z(s) \, |\, Z(-s) \sim \cN\left(  \alpha(s) - (P(s,s))^{-1}  \sum_{t \neq s} P(s,t) \Big( z(t) - \alpha(t) \Big)  , (P(s,s))^{-1} \right).
\]
Write $\operatorname{Ne}(s)$ for the neighbors of $s$ in $I_n$ w.r.t.\ the four-nearest-neighborhood structure. Since $P = \Sigma^{-1}$ is symmetric and since we can assume that $\left(P(s,s) \right)^{-1} > 0$, $Z$ is a Markov random field if and only if for all nodes $s\in I_n$
\begin{align*}
		P(s,t) \neq 0 \text{ for all } t \in \operatorname{Ne}(s) \text{ and }
		P(s,t) = 0 \text{ for all } t \in I_n \setminus \operatorname{Ne}(s).
\end{align*}
\cite{cressie1993statistics} investigates the conditional specification
\begin{align}\label{ConditionalNormalCressie}
		Z(s) \,|\, Z(-s) \sim \cN\left( \alpha(s) + \sum_{t \in \operatorname{Ne}(s)} c(s,t) \big(Z(t) - \alpha(t) \big),  \varsigma^2(s) \right)
\end{align}
where $C=\big( c(s,t) \big)_{s,t}$ is a $|I_n|\times |I_n|$ matrix and $D = \text{diag}(\varsigma^2(s): s\in I_n)$ is a diagonal matrix such that the coefficients satisfy the necessary condition $\varsigma^2(s) \, c(t,s) = \varsigma^2(t)\, c(s,t)$ for $s\neq t$ and $c(s,s) = 0$ as well as $c(s,t) = 0 = c(t,s)$ if $s,t$ are no neighbors. This means $P(s,t) = -c(s,t)\, P(s,s)$, i.e., $\Sigma^{-1} = P =  D^{-1}  (I-C)$. If $I-C$ is invertible and $(I-C)^{-1} D$ is symmetric and positive definite, then the entire random field is multivariate normal with	$Z \sim \cN\left( \alpha, (I-C)^{-1} D \right)$.

In particular, it is plausible in many applications to use equal weights $c(s,t)$: we can write the matrix $C$ as $\eta H$, where $H$ is the adjacency matrix of $G$, i.e., $H(s,t)$ is 1 if $s,t$ are neighbors, otherwise it is 0. Denote the minimal eigenvalue of $H$ by $h_0$ and the corresponding maximal eigenvalue by $h_m$. We know from the properties of the Neumann series that $I-C$ is invertible if $(h_0)^{-1} < \eta < (h_m)^{-1}$ in the case where $h_0<0<h_m$; this last condition is often satisfied in applications. We choose the conditional variance $\varsigma^2(s)$ such that the diagonal matrix $D$ consists of the inverse elements of the diagonal of the matrix $(I-C)^{-1}$. Hence, the marginals of the $Z(s)$ are standard normally distributed. 

The graph structure of the index set $I_n$ allows us two partition $I_n$ into two sets $C_1$ and $C_2$, which are disjoint w.r.t.\ the edges from the four-nearest neighborhood structure such that within each set $C_i$ any two points $s,t$ are no neighbors. The sets $C_i$ are termed concliques, see \cite{kaiser2012}. An important property is now the following: if $Z(s)$, $s\in I_n$ is a Markov random field, then the conditional distribution of the $Z(s)$ with $s\in C_1$ given the $Z(s)$ with $s\in C_2$ factorizes as a product due to the conditional independence. The same is true if we change the roles of $C_1$ and $C_2$.

This insight yields the following MCMC algorithm: All $Z(s)$, $s\in I_n$ are initialized according to a certain distribution. Then with the help of Equation~\eqref{ConditionalNormalCressie} conditional on the conclique $C_2$ the random variables $Z(s)$ with $s\in C_1$ are updated. Afterwards, the random variables which belong to $C_2$ are updated with \eqref{ConditionalNormalCressie} based on the (new) realizations of $C_1$. The last two steps are repeated many times until the random field approximately reaches its stationary distribution. Hence, if we compare this method to the Gibbs sampler, we see that a complete update of the random field can be performed in two steps. More details on this procedure, in particular its asymptotic properties, can be found for instance in \cite{krebs2017orthogonal}.

As we do not simulate one random vector but instead five, we use a Gaussian copula in the update steps such that also four of the random vectors are dependent, namely $Z_1$, $Z_2$, $Z_3$ and $Z_4$. $Z_5$ is independent of the first four. So we have a dependence within each component $Z_i$ and the first four components are also dependent among each other.

We run 1000 iterations of the MCMC-algorithm for the simulation of $(Z_1,\ldots,Z_5)$. The parametrization of the multivariate normal distribution is chosen as follows $\alpha_i(s) \equiv 0$ and $\sigma_i = 1$ for all $s \in I_n$ and $i=1,\ldots,5$. The dependence parameters $\eta_i$ that determine the interaction within a distribution $Z_i$ are chosen as follows 0.2, -0.1, -0.22, 0.2 and 0.22, note that $|\eta_i| = 0.22$ constitutes a strong dependence, whereas $\eta_i = 0$ indicates independence. In this case the admissible range of $\eta$ is very close to $(-0.25, 0.25)$ which is the parameter space of $\eta$ for a lattice wrapped on a torus. The approximate correlations of the first four $Z_i$ are given by	$	\rho_{1,2} \approx 0.1, \rho_{1,3} \approx 0, \rho_{1,4} \approx 0, \rho_{2,3} \approx 0, \rho_{2,4} \approx 0 \text{ and } \rho_{3,4} \approx 0.1$.

With these distributions we define a random variable $Y$ with a non-continuous density as follows: firstly, we retransform $Z_5$ to a discrete random variable $S$ which takes the states $0$ and $1$ with probability $1/2$. Secondly, transform $Z_1$ and $Z_2$ to a random variable $U_1$ and $U_2$ which are both uniformly distributed on $[0,1]$. And thirdly, we define $X_1$ and $X_2$ as rescaled and shifted $Z_3$ and $Z_4$ such that they are normally distributed with parameters $\mu = 0.5$ and $\sigma^2 = 0.2$. Set now $Y = \I\{ S = 0\} \, [U_1,U_2] + \I\{ S = 1\} \, [X_1, X_2]$, then $Y$ admits the density
\[
		f_{(Y_1,Y_2)} = \frac{1}{2} \, 1_{[0,1]^2 } + \frac{1}{2}\, \cN\left( \begin{pmatrix} 0.5\\ 0.5 \end{pmatrix},  0.2^2 \begin{pmatrix} 1 &  \rho \\ \rho & 1 \end{pmatrix} \right),
\]
where $\rho\approx 0.1$. A density plot is given in Figure~\ref{fig:Density_True_Haar_D4_0}. We estimate the density $f_{(Y_1,Y_2)}$ with the linear and the nonlinear wavelet estimators based on isotropic Haar wavelets and Daubechies 4-wavelets and the sample $Z(s)$ $s\in I_{n,1}$. We abbreviate the Daubechies wavelet by $D4$ (resp. $db2$), see \cite{daubechies1992ten}.

Then we compute for several levels the verification criterion from Equation~\eqref{validationCriterion} with the random variables $Z(s)$ $s\in I_{n,2}$. We perform this whole procedure $1000$ times in total for each sample size. This means that we compute for each of the 1000 simulations the verification criterion for each estimator at each level $j$. Afterwards we can compute the average and the empirical standard deviation of this statistic for each scenario. The numerical results are given in Table~\ref{TableExample2}. Here the verification criterion is computed for different levels $j$ reaching from 0 to 4. This means for the linear estimator that the coefficients $\theta_{j,\gamma}$ are computed for each of these levels. For the nonlinear estimator the coefficients $\theta_{j,\gamma}$ are computed only for the level $j=0$. Then the nonlinear details $\upsilon_{k,j,\gamma}$ are added successively for $j=1,\ldots,4$. We also show in the table which estimator, which levels $j$ and which threshold are the most suitable for each sample size.

For a better comparison, we also give in Table~\ref{TableExample2_ind_ref_sample} the results, which are derived with an independent reference sample $\widetilde{Z}=(\widetilde{Z}_1,\ldots,\widetilde{Z}_5)$. This means that the random variables within a component $\widetilde{Z}_i$ are i.i.d., i.e., $\widetilde{Z}_i(s)$ are i.i.d.\ for $s\in I_n$ and for fix $i=1,\ldots,5$. The correlations between the vectors $\widetilde{Z}_i$ correspond to those of the $Z_i$. Note that we use for hard thresholding several multiples of $\max\{ |\hat{\upsilon}_{k,\ell,\gamma}| :  k=1,\ldots,|M|-1, \gamma\in\Z^2 \}$, however, the multiple is the same for all levels $j^*,\ldots,j_1$ and only varies for the entire estimator. Examples of density estimates are given in Figures~\ref{fig:Density_Haar} and \ref{fig:Density_D4}. The estimators have been corrected for possible negative regions, we refer to Appendix~\ref{Appendix_QuestionOfNormalization}.

Firstly, we see that the estimator from the dependent sample performs as well as the estimator from the independent sample, as suggested by the theoretical results. This is true for each sample size and for each wavelet type. Moreover, we find that the optimal multiple of the threshold $\bar{\lambda}$ is the same for all sample sizes. In particular it is different from zero in each case. 

We note that the values of the verification criterion can be compared across the different wavelets, this follows from Equations~\eqref{DefISE} and \eqref{validationCriterion}. Hence in terms of the validation criterion, we find that the Daubecchies wavelet performs better than the Haar wavelet for each sample size in this example. However, they have in common that the optimal level $j$ is the same for each sample size: the level $j=2$ minimizes the criterion for the sample sizes 400 and 1225. It is level $j=3$ for the larger sample sizes 2500 and 4225.

\begin{figure}[ht]
    \centering
    \begin{subfigure}[b]{0.33\textwidth}
				\includegraphics[height = 6.7cm,  keepaspectratio, trim = 0 0 0 0, clip=true]{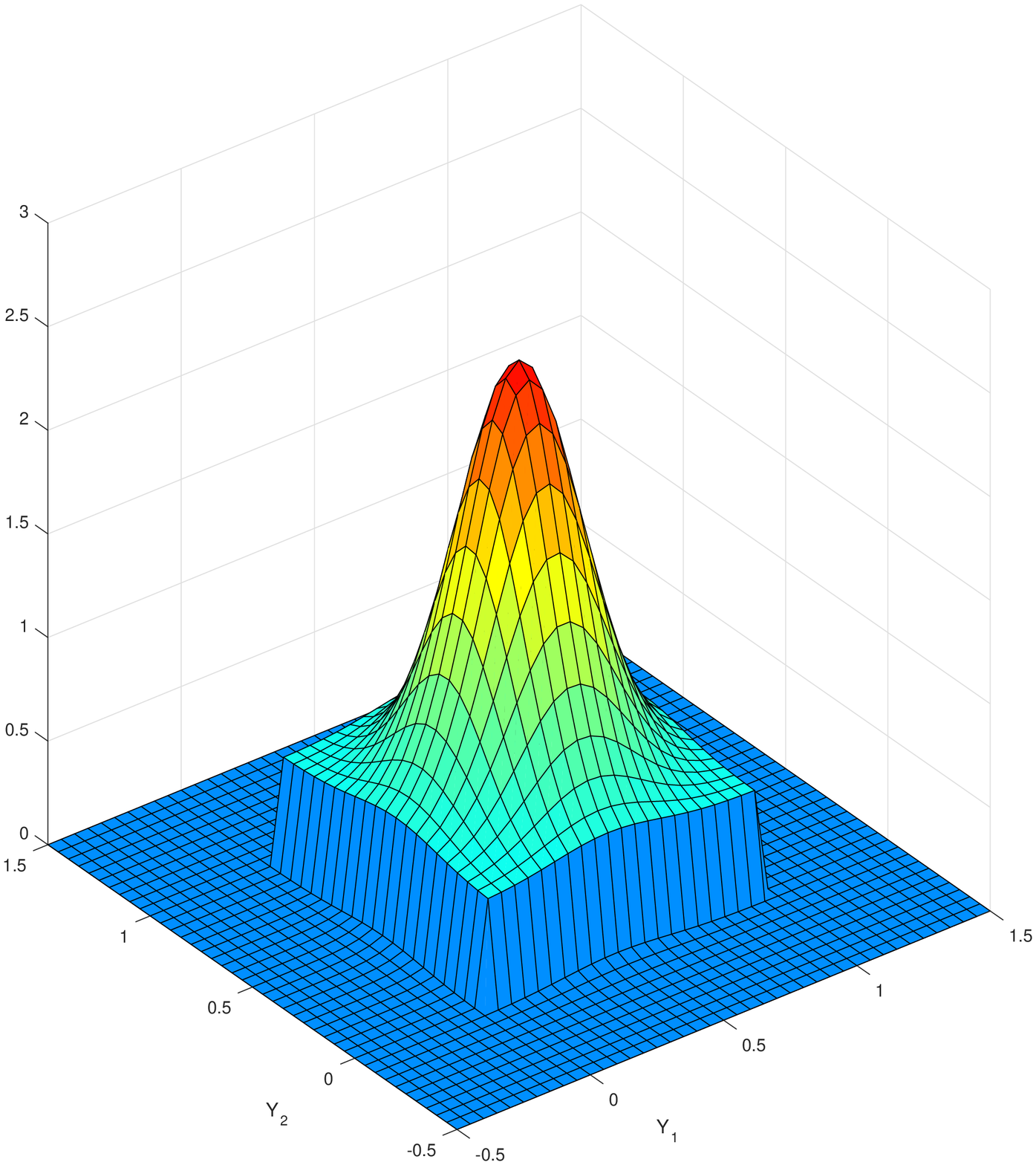}
				\caption{True density function}
					\label{fig:Density_True_Haar_D4_0}
    \end{subfigure}
    \begin{subfigure}[b]{0.33\textwidth}
        \includegraphics[height = 6.7cm, keepaspectratio, trim = 0 0 0 0, clip=true]{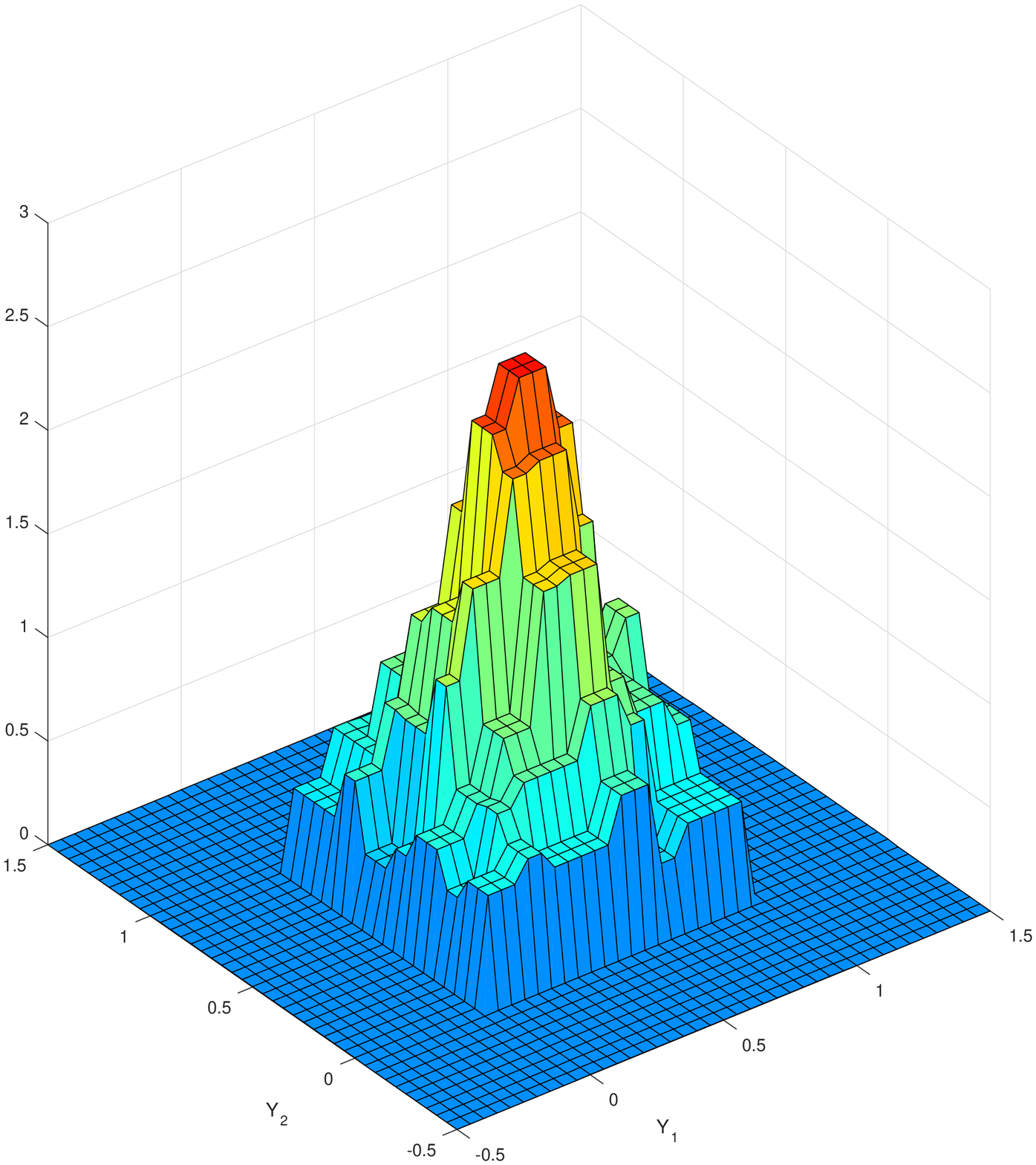}
				\caption{Haar estimate (for $j=3$, $\lambda = 0.1$)}
				\label{fig:Density_Haar}
    \end{subfigure}
    \begin{subfigure}[b]{0.33\textwidth}
        \includegraphics[height = 6.7cm,  keepaspectratio, trim = 0 0 0 0, clip=true]{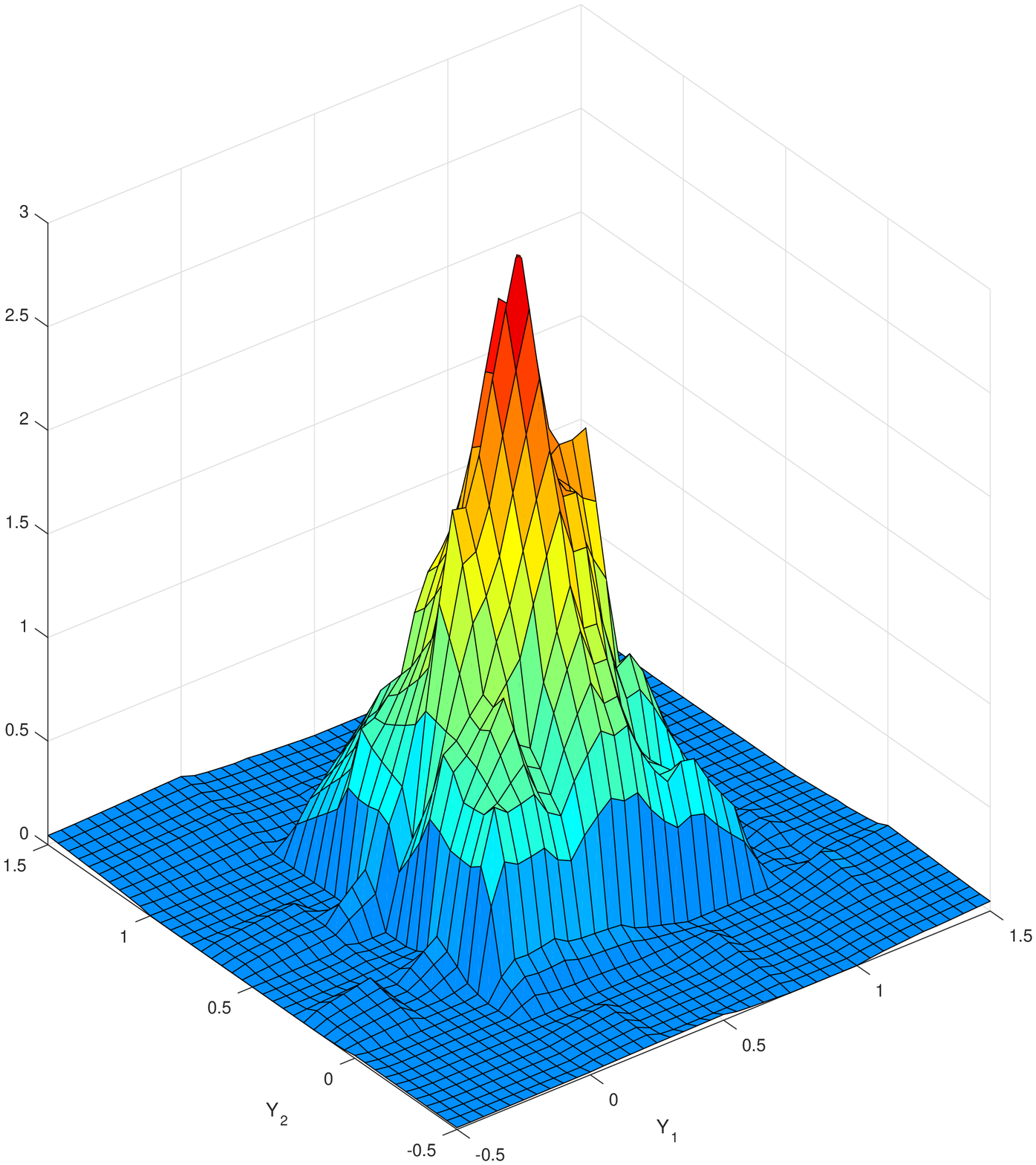}
				\caption{D4 estimate (for $j=3$, $\lambda = 0.1$)}
				\label{fig:Density_D4}
    \end{subfigure}
    \caption{Estimation of a mixture density with a sample from the two-dimensional lattice of size 4225.}\label{fig:DensityEstimationPics}
\end{figure}

\begin{table}[ht]
\begin{center}
\begin{tabular}{|| c || c || c || c | c | c || c || c | c | c ||  }
\hline
\hline
	& & \multicolumn{4}{|| c ||}{ Haar } & \multicolumn{4}{| c ||}{ D4} \\
\hline
\hline
sample size & j & linear & \multicolumn{3}{c ||}{ nonlinear: hard threshold } & linear & \multicolumn{3}{c ||}{ nonlinear: hard threshold } \\
\hline
	& &					&	0.1  & 0.2 & 0.3 &			&	0.1 & 0.2 & 0.3 \\
\hline
\hline
\multirow{2}{*}{400}	&	\multirow{2}{*}{0}	&	-0.922	&	-	&	-	&	-	&	-0.583	&	-	&	-	&	-	\\
	&		&	(0.040)	&	-	&	-	&	-	&	(0.054)	&	-	&	-	&	-	\\
\hline																			
\multirow{2}{*}{400}	&	\multirow{2}{*}{1}	&	-0.871	&	-0.922	&	-0.922	&	-0.922	&	-1.090	&	-1.094	&	-1.083	&	-1.058	\\
	&		&	(0.054)	&	(0.050)	&	(0.050)	&	(0.050)	&	(0.138)	&	(0.141)	&	(0.139)	&	(0.133)	\\
\hline																			
\multirow{2}{*}{400}	&	\multirow{2}{*}{2}	&	-1.026	&	\textbf{-1.067}	&	-1.064	&	-1.055	&	-1.161	&	\textbf{-1.166}	&	-1.154	&	-1.128	\\
	&		&	(0.139)	&	(0.137)	&	(0.136)	&	(0.134)	&	(0.167)	&	(0.170)	&	(0.169)	&	(0.161)	\\
\hline																			
\multirow{2}{*}{400}	&	\multirow{2}{*}{3}	&	-0.928	&	-0.980	&	-0.981	&	-0.982	&	-1.045	&	-1.052	&	-1.046	&	-1.029	\\
	&		&	(0.187)	&	(0.180)	&	(0.178)	&	(0.174)	&	(0.202)	&	(0.205)	&	(0.202)	&	(0.192)	\\
\hline																			
\multirow{2}{*}{400}	&	\multirow{2}{*}{4}	&	-0.384	&	-0.494	&	-0.538	&	-0.606	&	-0.506	&	-0.520	&	-0.565	&	-0.622	\\
	&		&	(0.266)	&	(0.253)	&	(0.246)	&	(0.239)	&	(0.276)	&	(0.278)	&	(0.275)	&	(0.261)	\\
\hline																			
\hline																			
\multirow{2}{*}{1225}	&	\multirow{2}{*}{0}	&	-0.922	&	-	&	-	&	-	&	-0.584	&	-	&	-	&	-	\\
	&		&	(0.021)	&	-	&	-	&	-	&	(0.032)	&	-	&	-	&	-	\\
\hline																			
\multirow{2}{*}{1225}	&	\multirow{2}{*}{1}	&	-0.878	&	-0.928	&	-0.928	&	-0.928	&	-1.089	&	-1.093	&	-1.083	&	-1.058	\\
	&		&	(0.025)	&	(0.025)	&	(0.025)	&	(0.025)	&	(0.078)	&	(0.080)	&	(0.079)	&	(0.075)	\\
\hline																			
\multirow{2}{*}{1225}	&	\multirow{2}{*}{2}	&	-1.052	&	\textbf{-1.090}	&	-1.089	&	-1.086	&	-1.189	&	\textbf{-1.194}	&	-1.180	&	-1.153	\\
	&		&	(0.074)	&	(0.074)	&	(0.074)	&	(0.074)	&	(0.094)	&	(0.096)	&	(0.094)	&	(0.089)	\\
\hline																			
\multirow{2}{*}{1225}	&	\multirow{2}{*}{3}	&	-1.043	&	-1.087	&	-1.087	&	-1.084	&	-1.163	&	-1.168	&	-1.157	&	-1.131	\\
	&		&	(0.092)	&	(0.091)	&	(0.091)	&	(0.090)	&	(0.102)	&	(0.104)	&	(0.102)	&	(0.097)	\\
\hline																			
\multirow{2}{*}{1225}	&	\multirow{2}{*}{4}	&	-0.867	&	-0.932	&	-0.942	&	-0.961	&	-0.977	&	-0.985	&	-0.988	&	-0.987	\\
	&		&	(0.114)	&	(0.112)	&	(0.110)	&	(0.107)	&	(0.120)	&	(0.122)	&	(0.119)	&	(0.113)	\\
\hline																			
\hline																			
\multirow{2}{*}{2500}	&	\multirow{2}{*}{0}	&	-0.923	&	-	&	-	&	-	&	-0.583	&	-	&	-	&	-	\\
	&		&	(0.016)	&	-	&	-	&	-	&	(0.024)	&	-	&	-	&	-	\\
\hline																			
\multirow{2}{*}{2500}	&	\multirow{2}{*}{1}	&	-0.881	&	-0.931	&	-0.931	&	-0.930	&	-1.091	&	-1.095	&	-1.084	&	-1.059	\\
	&		&	(0.019)	&	(0.018)	&	(0.018)	&	(0.018)	&	(0.059)	&	(0.060)	&	(0.059)	&	(0.056)	\\
\hline																			
\multirow{2}{*}{2500}	&	\multirow{2}{*}{2}	&	-1.063	&	-1.101	&	-1.101	&	-1.099	&	-1.196	&	-1.201	&	-1.186	&	-1.158	\\
	&		&	(0.056)	&	(0.055)	&	(0.055)	&	(0.055)	&	(0.069)	&	(0.071)	&	(0.070)	&	(0.066)	\\
\hline																			
\multirow{2}{*}{2500}	&	\multirow{2}{*}{3}	&	-1.079	&	\textbf{-1.119}	&	\textbf{-1.119}	&	-1.116	&	-1.197	&	\textbf{-1.202}	&	-1.188	&	-1.161	\\
	&		&	(0.065)	&	(0.064)	&	(0.064)	&	(0.064)	&	(0.071)	&	(0.072)	&	(0.071)	&	(0.067)	\\
\hline																			
\multirow{2}{*}{2500}	&	\multirow{2}{*}{4}	&	-1.000	&	-1.051	&	-1.056	&	-1.063	&	-1.111	&	-1.117	&	-1.108	&	-1.092	\\
	&		&	(0.073)	&	(0.072)	&	(0.072)	&	(0.071)	&	(0.078)	&	(0.080)	&	(0.078)	&	(0.074)	\\
\hline																			
\hline																			
\multirow{2}{*}{4225}	&	\multirow{2}{*}{0}	&	-0.922	&	-	&	-	&	-	&	-0.584	&	-	&	-	&	-	\\
	&		&	(0.012)	&	-	&	-	&	-	&	(0.018)	&	-	&	-	&	-	\\
\hline																			
\multirow{2}{*}{4225}	&	\multirow{2}{*}{1}	&	-0.881	&	-0.930	&	-0.930	&	-0.930	&	-1.089	&	-1.092	&	-1.081	&	-1.057	\\
	&		&	(0.014)	&	(0.013)	&	(0.013)	&	(0.013)	&	(0.045)	&	(0.046)	&	(0.045)	&	(0.043)	\\
\hline																			
\multirow{2}{*}{4225}	&	\multirow{2}{*}{2}	&	-1.063	&	-1.101	&	-1.101	&	-1.100	&	-1.197	&	-1.201	&	-1.186	&	-1.158	\\
	&		&	(0.041)	&	(0.040)	&	(0.040)	&	(0.040)	&	(0.053)	&	(0.054)	&	(0.053)	&	(0.050)	\\
\hline																			
\multirow{2}{*}{4225}	&	\multirow{2}{*}{3}	&	-1.087	&\textbf{	-1.128}	&	-1.127	&	-1.125	&	-1.207	&	\textbf{-1.212}	&	-1.196	&	-1.169	\\
	&		&	(0.047)	&	(0.047)	&	(0.047)	&	(0.046)	&	(0.054)	&	(0.055)	&	(0.054)	&	(0.051)	\\
\hline																			
\multirow{2}{*}{4225}	&	\multirow{2}{*}{4}	&	-1.044	&	-1.091	&	-1.094	&	-1.097	&	-1.158	&	-1.163	&	-1.152	&	-1.130	\\
	&		&	(0.051)	&	(0.050)	&	(0.050)	&	(0.050)	&	(0.056)	&	(0.057)	&	(0.056)	&	(0.054)	\\
\hline
\hline
\end{tabular}
\caption{Approximate validation criterion from \eqref{validationCriterion} computed for the density estimation problem with the Haar wavelet and the D4-wavelet. The hard thresholding estimator is computed w.r.t. the levels $j_0=0$ and $j_1=1,\ldots,4$. The thresholds 0.1, 0.2, 0.3 are relative thresholds as explained in the text.}
\label{TableExample2}
\end{center}
\end{table}

\begin{table}[ht]
\begin{center}
\begin{tabular}{|| c || c || c || c | c | c || c || c | c | c ||  }
\hline
\hline
	& & \multicolumn{4}{|| c ||}{ Haar } & \multicolumn{4}{| c ||}{ D4 } \\
\hline
\hline
 sample size & j & linear & \multicolumn{3}{c ||}{ nonlinear: hard threshold } & linear & \multicolumn{3}{c ||}{ nonlinear: hard threshold } \\
\hline
	& &					&	0.1  & 0.2 & 0.3 &			&	0.1 & 0.2 & 0.3 \\
\hline
\hline
\multirow{2}{*}{400}	&	\multirow{2}{*}{0}	&	-0.923	&	-	&	-	&	-	&	-0.587	&	-	&	-	&	-	\\
	&		&	(0.036)	&	-	&	-	&	-	&	(0.051)	&	-	&	-	&	-	\\
\hline																			
\multirow{2}{*}{400}	&	\multirow{2}{*}{1}	&	-0.875	&	-0.927	&	-0.926	&	-0.926	&	-1.087	&	-1.091	&	-1.082	&	-1.056	\\
	&		&	(0.049)	&	(0.046)	&	(0.046)	&	(0.046)	&	(0.130)	&	(0.133)	&	(0.132)	&	(0.125)	\\
\hline																			
\multirow{2}{*}{400}	&	\multirow{2}{*}{2}	&	-1.035	&	\textbf{-1.073}	&	-1.071	&	-1.063	&	-1.167	&	\textbf{-1.173}	&	-1.162	&	-1.134	\\
	&		&	(0.132)	&	(0.127)	&	(0.126)	&	(0.126)	&	(0.151)	&	(0.154)	&	(0.153)	&	(0.146)	\\
\hline																			
\multirow{2}{*}{400}	&	\multirow{2}{*}{3}	&	-0.933	&	-0.985	&	-0.987	&	-0.988	&	-1.048	&	-1.055	&	-1.049	&	-1.031	\\
	&		&	(0.178)	&	(0.167)	&	(0.165)	&	(0.162)	&	(0.187)	&	(0.190)	&	(0.188)	&	(0.180)	\\
\hline																			
\multirow{2}{*}{400}	&	\multirow{2}{*}{4}	&	-0.388	&	-0.497	&	-0.541	&	-0.614	&	-0.505	&	-0.519	&	-0.560	&	-0.620	\\
	&		&	(0.263)	&	(0.247)	&	(0.242)	&	(0.236)	&	(0.264)	&	(0.265)	&	(0.259)	&	(0.246)	\\
\hline																			
\hline																			
\multirow{2}{*}{1225}	&	\multirow{2}{*}{0}	&	-0.922	&	-	&	-	&	-	&	-0.584	&	-	&	-	&	-	\\
	&		&	(0.019)	&	-	&	-	&	-	&	(0.027)	&	-	&	-	&	-	\\
\hline																			
\multirow{2}{*}{1225}	&	\multirow{2}{*}{1}	&	-0.879	&	-0.929	&	-0.929	&	-0.929	&	-1.093	&	-1.097	&	-1.087	&	-1.060	\\
	&		&	(0.027)	&	(0.024)	&	(0.024)	&	(0.024)	&	(0.068)	&	(0.069)	&	(0.069)	&	(0.065)	\\
\hline																			
\multirow{2}{*}{1225}	&	\multirow{2}{*}{2}	&	-1.053	&	\textbf{-1.092}	&	-1.091	&	-1.088	&	-1.190	&	\textbf{-1.195}	&	-1.183	&	-1.153	\\
	&		&	(0.066)	&	(0.062)	&	(0.062)	&	(0.062)	&	(0.077)	&	(0.079)	&	(0.078)	&	(0.074)	\\
\hline																			
\multirow{2}{*}{1225}	&	\multirow{2}{*}{3}	&	-1.047	&	-1.090	&	-1.090	&	-1.088	&	-1.165	&	-1.170	&	-1.160	&	-1.132	\\
	&		&	(0.081)	&	(0.078)	&	(0.077)	&	(0.076)	&	(0.086)	&	(0.088)	&	(0.087)	&	(0.083)	\\
\hline																			
\multirow{2}{*}{1225}	&	\multirow{2}{*}{4}	&	-0.870	&	-0.933	&	-0.945	&	-0.965	&	-0.978	&	-0.986	&	-0.990	&	-0.987	\\
	&		&	(0.102)	&	(0.097)	&	(0.096)	&	(0.093)	&	(0.107)	&	(0.108)	&	(0.105)	&	(0.100)	\\
\hline																			
\hline																			
\multirow{2}{*}{2500}	&	\multirow{2}{*}{0}	&	-0.923	&	-	&	-	&	-	&	-0.585	&	-	&	-	&	-	\\
	&		&	(0.015)	&	-	&	-	&	-	&	(0.021)	&	-	&	-	&	-	\\
\hline																			
\multirow{2}{*}{2500}	&	\multirow{2}{*}{1}	&	-0.881	&	-0.932	&	-0.932	&	-0.931	&	-1.094	&	-1.097	&	-1.088	&	-1.061	\\
	&		&	(0.018)	&	(0.018)	&	(0.018)	&	(0.017)	&	(0.051)	&	(0.052)	&	(0.051)	&	(0.048)	\\
\hline																			
\multirow{2}{*}{2500}	&	\multirow{2}{*}{2}	&	-1.062	&	-1.101	&	-1.101	&	-1.099	&	-1.199	&	-1.203	&	-1.190	&	-1.159	\\
	&		&	(0.050)	&	(0.047)	&	(0.047)	&	(0.047)	&	(0.057)	&	(0.058)	&	(0.057)	&	(0.054)	\\
\hline																			
\multirow{2}{*}{2500}	&	\multirow{2}{*}{3}	&	-1.079	&	\textbf{-1.121}	&	-1.120	&	-1.117	&	-1.199	&	\textbf{-1.204}	&	-1.192	&	-1.162	\\
	&		&	(0.056)	&	(0.055)	&	(0.055)	&	(0.055)	&	(0.061)	&	(0.062)	&	(0.061)	&	(0.058)	\\
\hline																			
\multirow{2}{*}{2500}	&	\multirow{2}{*}{4}	&	-1.002	&	-1.052	&	-1.057	&	-1.063	&	-1.114	&	-1.120	&	-1.113	&	-1.094	\\
	&		&	(0.064)	&	(0.064)	&	(0.064)	&	(0.063)	&	(0.069)	&	(0.070)	&	(0.068)	&	(0.065)	\\
\hline																			
\hline																			
\multirow{2}{*}{4225}	&	\multirow{2}{*}{0}	&	-0.922	&	-	&	-	&	-	&	-0.586	&	-	&	-	&	-	\\
	&		&	(0.011)	&	-	&	-	&	-	&	(0.015)	&	-	&	-	&	-	\\
\hline																			
\multirow{2}{*}{4225}	&	\multirow{2}{*}{1}	&	-0.882	&	-0.932	&	-0.932	&	-0.932	&	-1.095	&	-1.098	&	-1.089	&	-1.062	\\
	&		&	(0.014)	&	(0.013)	&	(0.013)	&	(0.013)	&	(0.038)	&	(0.039)	&	(0.039)	&	(0.036)	\\
\hline																			
\multirow{2}{*}{4225}	&	\multirow{2}{*}{2}	&	-1.067	&	-1.104	&	-1.104	&	-1.104	&	-1.202	&	-1.207	&	-1.193	&	-1.162	\\
	&		&	(0.035)	&	(0.033)	&	(0.033)	&	(0.033)	&	(0.042)	&	(0.043)	&	(0.043)	&	(0.040)	\\
\hline																			
\multirow{2}{*}{4225}	&	\multirow{2}{*}{3}	&	-1.093	&	\textbf{-1.132}	&	\textbf{-1.132}	&	-1.130	&	-1.212	&	\textbf{-1.217}	&	-1.204	&	-1.174	\\
	&		&	(0.039)	&	(0.038)	&	(0.038)	&	(0.038)	&	(0.043)	&	(0.044)	&	(0.044)	&	(0.041)	\\
\hline																			
\multirow{2}{*}{4225}	&	\multirow{2}{*}{4}	&	-1.049	&	-1.095	&	-1.097	&	-1.101	&	-1.163	&	-1.169	&	-1.159	&	-1.134	\\
	&		&	(0.044)	&	(0.043)	&	(0.042)	&	(0.041)	&	(0.047)	&	(0.048)	&	(0.048)	&	(0.045)	\\
\hline
\hline
\end{tabular}
\caption{Approximate validation criterion from Equation \eqref{validationCriterion} with independent reference samples. The hard thresholding estimator is computed w.r.t. the levels $j_0=0$ and $j_1=1,\ldots,4$. The thresholds 0.1, 0.2, 0.3 are relative thresholds as explained in the text.}
\label{TableExample2_ind_ref_sample}
\end{center}
\end{table}

\section{Proofs of the theorems in Section \ref{WaveletResults}} \label{Proofs_of_the_main_theorems}
Throughout this section, we use the common convention to abbreviate constants in $\R$ by $A_i$ or $A$ or likewise by $C_i$ or $C$. Furthermore, we use the convention to write $\norm{\,\cdot\,}_p$ for the norm of $L^p(\lambda^d)$, $p\in[1,\infty]$. The idea of the first lemma dates back at least to \cite{meyer1990ondelettes}. It applies in particular to wavelets $\Psi_k$ which have compact support.
\begin{lemma}[Norm equivalence on Besov spaces]\label{BesovEquivalence}
The norms in \eqref{GeneralizedBesovNorm} and in \eqref{GeneralizedBesovNorm2} are equivalent provided that the wavelets $\Psi_k$ are integrable and $\sup_{x\in\R^d} \sum_{\gamma\in\Z^d} |\Psi_k(x-\gamma)| < \infty$ for each $k=0,\ldots,|M|-1$.
\end{lemma}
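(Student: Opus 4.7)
The plan is to prove the equivalence term by term: for every scale $j \ge 0$ and every index $k \in \{0,1,\ldots,|M|-1\}$, I want to show
\[
\Bigl\lVert \sum_{\gamma\in\Z^d} c_{\gamma}\, \Psi_{k,j,\gamma} \Bigr\rVert_{L^p(\lambda^d)} \asymp |M|^{j(1/2-1/p)}\, \bigl\lVert (c_{\gamma})_{\gamma}\bigr\rVert_{\ell^p}
\]
with constants depending only on the wavelets and on $|M|$, $p$. Summed over $k$ and weighted with $|M|^{jsq}$ this immediately matches the definitions in \eqref{GeneralizedBesovNorm} and \eqref{GeneralizedBesovNorm2}, and the same estimate applied at $j=0$ with $\Psi_0=\Phi$ handles the scaling-function term.

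First I would reduce to level $j=0$ by a change of variables. Writing $\Psi_{k,j,\gamma}(x) = |M|^{j/2}\Psi_k(M^j x - \gamma)$ and substituting $y = M^j x$ gives
\[
\Bigl\lVert \sum_{\gamma} c_{\gamma}\, \Psi_{k,j,\gamma} \Bigr\rVert_{L^p}^p
= |M|^{j(p/2 - 1)}\, \Bigl\lVert \sum_{\gamma} c_{\gamma}\, \Psi_k(\darg - \gamma)\Bigr\rVert_{L^p}^p,
\]
so it suffices to prove $\bigl\lVert \sum_{\gamma} c_{\gamma}\, \Psi_k(\darg - \gamma)\bigr\rVert_{L^p} \asymp \lVert (c_{\gamma})\rVert_{\ell^p}$ uniformly in $k$.

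For the upper bound I would apply H\"older's inequality twice. Set $S_k \coloneqq \sup_{x\in\R^d}\sum_{\gamma} |\Psi_k(x-\gamma)|$, which is finite by assumption. Splitting $|\Psi_k(x-\gamma)| = |\Psi_k(x-\gamma)|^{1/p} |\Psi_k(x-\gamma)|^{1/p'}$ in the pointwise bound and applying H\"older gives
\[
\Bigl|\sum_{\gamma} c_{\gamma}\, \Psi_k(x-\gamma)\Bigr|^p
\le S_k^{p/p'}\, \sum_{\gamma} |c_{\gamma}|^p\, |\Psi_k(x-\gamma)|,
\]
and integrating in $x$ produces $\bigl\lVert \sum_{\gamma} c_{\gamma}\, \Psi_k(\darg - \gamma)\bigr\rVert_{L^p}^p \le S_k^{p-1}\, \lVert \Psi_k\rVert_{L^1(\lambda^d)}\, \lVert (c_{\gamma})\rVert_{\ell^p}^p$. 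The assumptions in the lemma (integrability and summability of the translates) are exactly what make both factors finite; compact support is one convenient way to guarantee them.

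For the lower bound I would exploit the $L^2$-orthonormality coming from Theorem~\ref{BenedettoTheorem}. For $k\in\{1,\ldots,|M|-1\}$ the family $\{\Psi_k(\darg - \gamma):\gamma\in\Z^d\}$ is orthonormal in $L^2(\lambda^d)$, and the same is true for $\Phi$ by the MRA axiom. Hence with $f\coloneqq \sum_{\gamma} c_{\gamma}\, \Psi_k(\darg - \gamma)$ we recover the coefficients via $c_{\gamma} = \scalar{f}{\Psi_k(\darg-\gamma)}$. H\"older applied to this integral gives
\[
|c_{\gamma}|^p \le \lVert\Psi_k\rVert_{L^1(\lambda^d)}^{p/p'}\, \int_{\R^d} |f(x)|^p\, |\Psi_k(x-\gamma)|\intd{x},
\]
and summing over $\gamma$ and exchanging sum and integral (Fubini, using $\sum_{\gamma}|\Psi_k(x-\gamma)|\le S_k$) yields $\lVert (c_{\gamma})\rVert_{\ell^p}^p \le \lVert\Psi_k\rVert_{L^1(\lambda^d)}^{p-1}\, S_k\, \lVert f\rVert_{L^p}^p$. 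This completes the Riesz-basis-type equivalence at level~$0$.

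The hardest step, conceptually, is the lower bound, because it relies on the duality with the $L^2$-orthonormal basis and therefore on the MRA structure rather than on a purely geometric property of the translates. Once both bounds are established, plugging them into the definitions, using $|M|^{jsq}\cdot |M|^{j(1/2-1/p)q} = |M|^{j(s+1/2-1/p)q}$, and taking $q$-th roots (with the standard modification if $p=\infty$ or $q=\infty$) delivers the claimed norm equivalence.
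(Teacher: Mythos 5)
Your proposal is correct and follows essentially the same route as the paper's proof: Hölder's inequality with respect to the measure $|\Psi_k|\intd{\lambda^d}$ for the upper bound, and coefficient recovery via the $L^2$-orthonormality of the translates plus the same Hölder/Fubini argument for the lower bound, with the factor $|M|^{j(1/2-1/p)}$ linking the two norms. The only difference is cosmetic: you rescale to level $j=0$ first by a change of variables, whereas the paper carries the scale factor through the estimates directly at level $j$.
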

\begin{proof}
We show that there are $0<C_1,C_2<\infty$ depending on $s,p,q$ such that $C_1 \norm{f}_{s,p,q} \le \norm{f}_{B^s_{p,q}} \le C_2 \norm{f}_{s,p,q}$. First we consider the left inequality: define for $j\ge j_0$ the functions $g^{(k)}_j \coloneqq \sum_{\gamma \in \Z^d} \upsilon_{k,j,\gamma} \, \Psi_{k,j,\gamma}$ for $k=1,\ldots,|M|-1$ and $g^{(0)}_j \coloneqq \sum_{\gamma\in\Z^d} \theta_{j_0,\gamma} \, \Phi_{j_0,\gamma}$.  Denote the H{\"o}lder conjugate of $p$ by $u$, then by the property of an orthonormal basis and H{\"o}lder's inequality applied to the measure $|\Psi_{k,j,\gamma}| \,\intd{\lambda^d}$
\begin{align*}
		|\upsilon_{k,j,\gamma}| &\le \left(	\int_{\R^d} |g^{(k)}_j|^p \, |\Psi_{k,j,\gamma}|	\,\intd{\lambda^d} \right)^{1/p}  \left(	\int_{\R^d} |\Psi_{k,j,\gamma}|	\,\intd{\lambda^d}	\right)^{1/u}, \\
		&\text{ thus, } \norm{ \upsilon_{k,j,\cdot} }_{\ell^p} \le |M|^{j(1/p-1/2)} \, \norm{\Psi_k}_1^{1/u} \,\norm{g^{(k)}_j}_p \norm{ \sum_{\gamma\in\Z^d} |\Psi_k(\cdot-\gamma)| }_{\infty}^{1/p}
\end{align*}
with the usual modification if $p=1$ or $p=\infty$; the same reasoning is true for the vector $\theta_{j_0,\cdot}$. Then,
\begin{align*}
	\norm{f}_{B^s_{p,q}} \ge C_1 \norm{f}_{s,p,q}, \text{ where } C_1 \coloneqq \min_{0\le k \le |M|-1} \left\{	\norm{\Psi_k}_1^{-1/u} \, \norm{ \sum_{\gamma\in\Z^d} |\Psi_k(\cdot-\gamma)| }_{\infty}^{-1/p} 	\right\} < \infty.
\end{align*}
For the right inequality, consider the following pointwise inequality
\begin{align*}
	|g^{(k)}_j| \le \sum_{\gamma\in\Z^d} \left| \upsilon_{k,j,\gamma} \right|\, \left|\Psi_{k,j,\gamma}		\right|^{1/p} \, \left|\Psi_{k,j,\gamma}		\right|^{1/u} \le \left( \sum_{\gamma\in\Z^d} |\upsilon_{k,j,\gamma}|^p \, |\Psi_{k,j,\gamma} | \right)^{1/p} \left( \sum_{\gamma\in\Z^d} |\Psi_{k,\ell,\gamma}| \right)^{1/u}
	\end{align*}
for $k=1,\ldots,|M|-1$; it is also true if $k=0$. Thus,
\begin{align*}
		\norm{ g^{(k)}_j }_p \le \norm{ \sum_{\gamma\in\Z^d} |\Psi_k(\,\cdot\,-\gamma)| }_{\infty}^{1/u} \norm{ \Psi_k }_1^{1/p} |M|^{j(1/2-1/p)} \norm{\upsilon_{k,j,\cdot} }_{\ell^p}. 
\end{align*}
Hence, $\norm{f}_{B^s_{p,q}} \le C_2 \norm{f}_{s,p,q}$ with $C_2 \coloneqq \max_{0\le k \le |M|-1} \norm{ \sum_{\gamma\in\Z^d} | \Psi_k(\,\cdot\, - \gamma ) | }_{\infty}^{1/u} \norm{\Psi_k}_1^{1/p} < \infty$.
\end{proof}

We are now prepared to give bounds on the estimation error of the linear estimator.
\begin{theorem}\label{LpConvergence}
Let $p'\in [1,\infty)$ and assume the density $f$ to be in $L^{p'}(\lambda^d)\cap L^\infty(\lambda^d)$.
\begin{enumerate}

	\item If $p'\in [1,2]$ and if Condition~\ref{regCond0}~\ref{DensityRequirement} is satisfied with $a=1$, then
	\begin{align*}
				\E{ \int_{\R^d} \left|\tilde{P}_j f - P_j f \right|^{p'} \,\intd{\lambda^d} } ^{1/p'} &\le C_{p'}\, (2L+1)^{d} \, \norm{\Phi}_{L^\infty(\lambda^d)} \norm{\Phi}_{L^2(\lambda^d)}\\
				&\quad \cdot \left\{  \norm{h^{1/2}}_{L^1(\lambda^d)}^{1/2} + \norm{\tilde{h}}_{L^2(\lambda^d)}	\right\} \frac{ |M|^{j/2}}{|I_n|^{1/2} }
	\end{align*}

	\item If $p'\in (2,\infty)$ and Condition~\ref{regCond0}~\ref{DensityRequirement} is satisfied with $a=2$, then
	\begin{align*}
	\E{ \int_{\R^d} |\tilde{P}_j f - P_j f|^{p'}\, \intd{\lambda^d} }^{1/p'}  &\le C_{p'}\, (2L+1)^{d} \, \norm{\Phi}_{L^\infty(\lambda^d)}^{1/p'} \norm{\Phi}_{L^{p'}(\lambda^d)}  \Bigg\{ \norm{h^{1/4}}_{L^1(\lambda^d)}^{1/p'} + \norm{\tilde{h}}_{L^1(\lambda^d)}^{1/p'}	\Bigg\}\\
		&\quad \cdot  		 \left\{ \frac{ |M|^{j(1-1/(2p'))} q_n^{N(1-1/p') } }{|I_n|^{1-1/(2p')} }   + \frac{ |M|^{j/2} }{|I_n|^{1/2} } \right\},
\end{align*}
where $q_n = B \ln |I_n|$ for $B>0$ arbitrary but fixed.
\end{enumerate}

The constant $C_{p'}$ depends on $p'$, the bound of the mixing coefficients which is given by the numbers $c_0, c_1 \in \R_+$. If $p'>2$, $C_{p'}$ depends additionally on $B$.
\end{theorem}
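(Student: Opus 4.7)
The plan is to decompose coordinate by coordinate in the wavelet expansion
\[
\tilde P_j f - P_j f = \sum_{\gamma\in\Z^d} \Delta_\gamma\, \Phi_{j,\gamma}, \qquad \Delta_\gamma \coloneqq \hat\theta_{j,\gamma} - \theta_{j,\gamma} = \frac{1}{|I_n|} \sum_{s\in I_n}\bigl(\Phi_{j,\gamma}(Z(s)) - \E{\Phi_{j,\gamma}(Z(s))}\bigr),
\]
and to exploit that $\Phi$ is compactly supported in $[0,L]^d$: at each point $x\in\R^d$ at most $(L+1)^d$ of the translates $\Phi_{j,\gamma}(x)$ are nonzero. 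The power-mean inequality then bounds $\bigl|\sum_\gamma \Delta_\gamma \Phi_{j,\gamma}(x)\bigr|^{p'}$ by $(2L+1)^{d(p'-1)}\sum_\gamma |\Delta_\gamma|^{p'}|\Phi_{j,\gamma}(x)|^{p'}$; integrating in $x$ and applying Fubini gives
\[
\E{\int_{\R^d} |\tilde P_j f - P_j f|^{p'}\intd{\lambda^d}} \le (2L+1)^{d(p'-1)}\, |M|^{j(p'/2-1)}\,\norm{\Phi}_{p'}^{p'}\sum_{\gamma\in\Z^d}\E{|\Delta_\gamma|^{p'}},
\]
so the task reduces to estimating $p'$-th moments of normalized sums of centered, bounded, strongly mixing random variables, and then summing over~$\gamma$.

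For $p'\in[1,2]$, Jensen replaces $\E{|\Delta_\gamma|^{p'}}$ by $\Var(\Delta_\gamma)^{p'/2}$. I would split $\Var(\Delta_\gamma) = |I_n|^{-2}\sum_{s,t\in I_n}\mathrm{Cov}(\Phi_{j,\gamma}(Z(s)),\Phi_{j,\gamma}(Z(t)))$ into a diagonal piece, bounded by $|I_n|^{-1}\int h\,\Phi_{j,\gamma}^2\intd{\lambda^d}$ via $f\le h$, and an off-diagonal piece, where the joint-density hypothesis $|f_{Z(s),Z(t)}-f\otimes f|\le \tilde h\otimes\tilde h$ yields $|\mathrm{Cov}|\le (\int|\Phi_{j,\gamma}|\tilde h\intd{\lambda^d})^2$ while the classical $\alpha$-mixing covariance inequality yields $|\mathrm{Cov}|\le 4\norm{\Phi_{j,\gamma}}_\infty^2 \alpha(\norm{s-t}_\infty)$. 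Splitting pairs by their separation and using exponential decay of $\alpha$ absorbs the tail without a $\ln$ factor, producing an off-diagonal contribution of the same order $|I_n|^{-1}$ as the diagonal one. The summation over $\gamma$ exploits the radial monotonicity of $h$: since $\Phi_{j,\gamma}^2$ is localized on a cell of volume $|M|^{-j}$ on which $h$ is essentially constant, $\int h\,\Phi_{j,\gamma}^2 \le L^d\norm{\Phi}_\infty^2\,h(a_\gamma)$, where $a_\gamma$ is the closest point of $\supp\Phi_{j,\gamma}$ to the origin, and a Riemann-sum comparison identifies $\sum_\gamma h(a_\gamma)^{p'/2}$ with $|M|^j\,\norm{h^{p'/4}}_{L^1(\lambda^d)}$ (and the $\tilde h$-analogue with $\norm{\tilde h}_{L^2(\lambda^d)}$). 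Reinstating the $|M|^{j(p'/2-1)}\norm{\Phi}_{p'}^{p'}$ prefactor and taking $p'$-th roots delivers the advertised rate $|M|^{j/2}/|I_n|^{1/2}$.

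For $p'\in(2,\infty)$ Jensen no longer helps, and I would bound $\E{|\Delta_\gamma|^{p'}}$ directly by a Rosenthal/Bernstein-type moment inequality for partial sums of bounded, centered, strongly mixing variables on $\Z^N$, as supplied by Appendix~\ref{Appendix_Exponential inequalities}. The standard big-block/small-block construction introduces the auxiliary length $q_n = B\ln|I_n|$ with $B$ chosen large enough that $\alpha(q_n)\le c_0|I_n|^{-Bc_1}$ is of strictly smaller polynomial order than the desired bound. The resulting moment estimate splits into a Gaussian piece of order $\bigl(|I_n|^{-1}\int h\,\Phi_{j,\gamma}^2\intd{\lambda^d}\bigr)^{1/2}$, which mirrors case~(i), and a large-deviation piece of order $\norm{\Phi_{j,\gamma}}_\infty\,q_n^{N(1-1/p')}/|I_n|^{1-1/(2p')}\simeq |M|^{j/2}\,q_n^{N(1-1/p')}/|I_n|^{1-1/(2p')}$. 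Summing over $\gamma$ as before, now with $a=2$ in Condition~\ref{regCond0}\ref{DensityRequirement} (producing the norms $\norm{h^{1/4}}_{L^1(\lambda^d)}$ and $\norm{\tilde h}_{L^1(\lambda^d)}$), delivers the two summands of the stated bound.

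The main obstacle will be executing the discrete-to-continuous $\gamma$-summation so that precisely the weighted-integrability norms $\norm{h^{1/(2a)}}_{L^1(\lambda^d)}$ and $\norm{\tilde h^{1/a}}_{L^2(\lambda^d)}$ of Condition~\ref{regCond0}\ref{DensityRequirement} emerge, rather than cruder unweighted analogues such as $\norm{h}_{L^1}$: the H\"older exponents distributed among the various $L^p$-norms have to be calibrated carefully against the wavelet rescaling $|M|^j$ and the radial monotonicity of $h,\tilde h$. Once this is in place, in case~(ii) the final tuning of $B$ in $q_n = B\ln|I_n|$ so that the long-range mixing tail is absorbed into the stated bound is essentially routine given exponential mixing.
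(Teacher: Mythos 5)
Your opening reduction (localization via the compact support of $\Phi$, H\"older over $\gamma$, reduction to $\sum_{\gamma}\E{|\hat\theta_{j,\gamma}-\theta_{j,\gamma}|^{p'}}$) is exactly the paper's bound \eqref{BoundEstimationErrorV1}, and your variance decomposition and big-block/Bernstein idea with $q_n=B\ln|I_n|$ correspond to Proposition~\ref{IntegrabilityForDependentSums} and Lemmas~\ref{LemmaExpInequalityWavelets}--\ref{LemmaMomentInequalityWavelets}. The genuine gap is the $\gamma$-summation, and it is more serious than the exponent ``calibration'' you flag at the end: the parts of your per-$\gamma$ bounds that come from sup-norm estimates carry \emph{no} dependence on $\gamma$ at all, so their sum over $\gamma\in\Z^d$ diverges whenever $f$ has unbounded support --- which the theorem allows, and which is the very reason the radial dominating functions $h,\tilde h$ appear in Condition~\ref{regCond0}~\ref{DensityRequirement}. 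Concretely, in case (i) your far-pair bound $|\operatorname{Cov}|\le 4\norm{\Phi_{j,\gamma}}_\infty^2\,\alpha(\norm{s-t}_\infty)=4|M|^{j}\norm{\Phi}_\infty^2\,\alpha(\norm{s-t}_\infty)$ is uniform in $\gamma$ and hence not summable; the paper instead uses Davydov's covariance inequality in moment form, $|\operatorname{Cov}|\le C\,\E{\Phi_{j,\gamma}^4(Z(s))}^{1/2}\alpha(\norm{s-t})^{1/2}$, which retains the localized factor $\bigl(\int \I{\supp\Phi(\cdot-\gamma)}\,h(M^{-j}y)\intd{y}\bigr)^{1/2}$ and is therefore summable over $\gamma$ (this is also precisely where the norm $\norm{h^{1/2}}_1$, i.e.\ $a=1$, originates). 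In case (ii) the same problem hits your plan directly: the large-deviation term of any Rosenthal/Bernstein bound for $\E{|\Delta_\gamma|^{p'}}$ is of size $\norm{\Phi_{j,\gamma}}_\infty^{p'}q_n^{\,\cdot}/|I_n|^{\,\cdot}$ uniformly in $\gamma$ and cannot be summed. The paper's device, which your proposal is missing, is the Cauchy--Schwarz factorization $\E{|\Delta_\gamma|^{p'}}\le\E{|\Delta_\gamma|^{2}}^{1/2}\E{|\Delta_\gamma|^{2(p'-1)}}^{1/2}$ (display \eqref{BoundEstimationErrorV2}): the high moment is bounded \emph{uniformly} in $\gamma$ by Lemma~\ref{LemmaMomentInequalityWavelets}, while all $\gamma$-summability is carried by $\sum_\gamma\E{|\Delta_\gamma|^2}^{1/2}$, which Proposition~\ref{IntegrabilityForDependentSums} with $a=2$ controls by $\{\norm{h^{1/4}}_1+\norm{\tilde h}_1\}\,|M|^{j}|I_n|^{-1/2}$. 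That factorization is also what produces the characteristic exponents $\norm{\Phi}_\infty^{1/p'}\norm{\Phi}_{p'}$ and $\{\cdot\}^{1/p'}$ in the stated bound; a direct per-$\gamma$ Rosenthal bound would not, even for compactly supported $f$ where only $\cO(|M|^j)$ translates contribute and your route could be rescued.

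Two smaller points. First, the Riemann-sum comparison gives $\sum_\gamma h(M^{-j}a_\gamma)^{p'/2}\simeq|M|^{j}\norm{h^{p'/2}}_{L^1(\lambda^d)}$, not $|M|^{j}\norm{h^{p'/4}}_{L^1(\lambda^d)}$; under Condition~\ref{regCond0}~\ref{DensityRequirement} with $a=1$ this is finite only after invoking boundedness of $h$ (via $h^{p'/2}\le\norm{h}_\infty^{(p'-1)/2}h^{1/2}$), so your per-$\gamma$ Jensen route for $p'\in[1,2)$ yields constants depending on $h$ in a way the statement avoids; the paper sidesteps this by reducing $p'\in[1,2)$ to the case $p'=2$, where Proposition~\ref{IntegrabilityForDependentSums} applies verbatim. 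Second, the covering number of a point by translates should be $(2L+1)^d$ rather than $(L+1)^d$, as you in fact use in the displayed constant. Neither of these is fatal, but the missing localization in the $\gamma$-sum is: as written, both cases of your argument do not close for densities with unbounded support.
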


\begin{proof}[Proof of Theorem~\ref{LpConvergence}]
We write $\tilde{f}_j$ (resp. $f_j$) instead of $\tilde{P}_j f$ (resp. $P_j f$) to keep the notation simple. W.l.o.g.\ the support of $\Phi$ is contained in $[0,L]^d$, $L\in\N_+$. Hence, a fixed $x\in\R^d$ is at most contained in the support of $(2L+1)^d$ translations of $\Phi$. Consequently, if we apply the H{\"o}lder inequality to the counting measure over the index $\gamma$, the estimation error is at most
\begin{align}
		\E{ \int_{\R^d} |f_j - \tilde{f}_j |^{p'} \,\intd{\lambda^d} } \le (2L+1)^{d(p'-1)} \norm{\Phi}_{p'}^{p'} |M|^{j(p'/2-1)} \sum_{\gamma \in \Z^d} \E{ |\hat{\theta}_{j,\gamma} - \theta_{j,\gamma} |^{p'} }.  \label{BoundEstimationErrorV1}
\end{align}
We investigate the sum in \eqref{BoundEstimationErrorV1}. Firstly, let ${p'}\in [1,2]$, then we obtain with Proposition~\ref{IntegrabilityForDependentSums} that
$$
		\left( \sum_{\gamma \in \Z^d} \E{ |\hat{\theta}_{j,\gamma} - \theta_{j,\gamma} |^2 } \right)^{1/2} \le C_2 L^{d/2} 2^{d/2 }\norm{\Phi}_{\infty} \left\{ \norm{h^{1/2}}_1^{1/2} +  \norm{\tilde{h} }_2 \right\} \frac{|M|^{j/2}}{|I_n|^{1/2} }.
$$
This yields the claim for $p'\le 2$, if we use that $(2L+1)^{d/2} L^{d/2} 2^{d/2} \le (2L+1)^d$ and the H{\"o}lder inequality to bound $\E{\int_{\R^d} |f_j - \tilde{f}_j |^{p'} \,\intd{\lambda^d}}^{1/p'}$ by $\E{\int_{\R^d} |f_j - \tilde{f}_j |^{2} \,\intd{\lambda^d}}^{1/2}$. 

Secondly, if $p' > 2$, we use the decomposition
\begin{align}\label{BoundEstimationErrorV2}
				\sum_{\gamma\in\Z^N} \E{ |\hat{\theta}_{j,\gamma} - \theta_{j,\gamma} |^{p'} } \le \sum_{\gamma\in\Z^N} \E{ |\hat{\theta}_{j,\gamma} - \theta_{j,\gamma} |^{2} }^{1/2} \E{ |\hat{\theta}_{j,\gamma} - \theta_{j,\gamma} |^{2(p'-1)} }^{1/2}. 
\end{align}
We bound the last factor inside the sum with Lemma~\ref{LemmaMomentInequalityWavelets}, note that $2(p'-1)>2$:
\begin{align*}
		\E{ |\hat{\theta}_{j,\gamma} - \theta_{j,\gamma} |^{2(p'-1)} }^{1/2} =  C_{2(p'-1) } \left\{ |I_n|^{-(p'-1) } + \left(\frac{|M|^{j/2} q_n^N}{|I_n|}\right)^{2(p'-1) } + \frac{ |M|^{j(p'-1) } }{|I_n|^{c_1 B} }  \right\}^{1/2},
		\end{align*}
where the constant $C_{2(p'-1) }$ depends on $B$. If we choose $B>0$ sufficiently large, then the last term inside the parentheses is negligible. Moreover, it follows with Proposition~\ref{IntegrabilityForDependentSums} that 
$$
	\sum_{\gamma\in\Z^N} \E{ |\hat{\theta}_{j,\gamma} - \theta_{j,\gamma} |^{2} }^{1/2} \le C_2 L^{d/2} 2^{d} \norm{\Phi}_\infty \left\{ \norm{h^{1/2}}_1 +  \norm{h^{1/4}}_1 +  \norm{\tilde{h} }_1 \right\} |M|^{j} {|I_n|^{-1/2} }.
$$	
We combine \eqref{BoundEstimationErrorV1} and \eqref{BoundEstimationErrorV2} to obtain the result. Note that we have $(2L+1)^{d(p'-1)/p'} L^{d/2} 2^{d/p'} \le (2L+1)^d$
\end{proof}

It follows the proof of Theorem~\ref{LinearDensityEstimationBesovFunction} which quantifies the rate of convergence of the linear estimator
\begin{proof}[Proof of Theorem~\ref{LinearDensityEstimationBesovFunction}]
Denote the H{\"o}lder conjugate of $p'$ by $u$, i.e., $(p')^{-1} + u^{-1} = 1$. We show that the approximation error $\norm{f - P_j f }_{p'}$ can be bounded by
\begin{align}\begin{split}\label{ApproximationErrorEq}
 \norm{f - P_j f }_{p'} &\le C_A \max_{1  \le k \le  |M|-1 } \norm{\Psi_k }_{1}  \max_{1 \le k \le |M|-1} \norm{ \sum_{\gamma\in\Z^d} |\Psi_k(\cdot-\gamma)| }_{\infty}^{1/u} \\
&\quad  \cdot \norm{f}_{s,p, \infty} |M|^{1-j s'} / (1-|M|^{-s'} ), 
\end{split}\end{align}
where the constant $C_A$ only differs from 1 if $p > p'$, in this case it depends on the domain $A$. 

We have to distinguish the cases $p \le p'$ and $p>p'$ but can treat this in one formula. We proceed as in the proof of Lemma~\ref{BesovEquivalence}:
\[
		\norm{ \sum_{\gamma\in\Z^d} \upsilon_{k,j,\gamma} \,\Psi_{k,j,\gamma} }_{p'} \le \max_{1 \le k \le |M|-1} \norm{ \sum_{\gamma\in\Z^d} |\Psi_k(\cdot-\gamma)| }_{\infty}^{1/u} \, \norm{ \Psi_k }_1^{1/p'} \,|M|^{j(1/2-1/p')} \,\norm{\upsilon_{k,j,\cdot}}_{\ell^{p'}},
\]
with the notation that $u$ is the H{\"o}lder conjugate to $p'$. In the case $p > p'$, the number of nonzero coefficients on the $j$-th level (for the $k$-th mother wavelet) is bounded by $C_A |M|^{j} $, where $C_A$ depends on the domain of $f$. This follows from the dilatation rules of volumes under linear transformations and from the fact that the domain $A$ is bounded. Consequently, we have in both cases $p\ge p'$ and $p< p'$ the inequalities for the $\ell^p$-sequence norms,
\[
	\norm{\upsilon_{k,j,\cdot}}_{\ell^{p'}} \le C_A \, |M|^{j(1/p' - 1/p)^+} \norm{ \upsilon_{k,j,\cdot}}_{\ell^{p}}
\]
where $C_A=1$ if $p' \ge p$. Then with H{\"o}lder's inequality and the Besov property of $f$,
\begin{align}\begin{split}
 \norm{f - P_j f }_{p'} &\le C_A \max_{1  \le k \le  |M|-1 } \norm{\Psi_k }_1^{1/p'}  \max_{1 \le k \le |M|-1} \norm{ \sum_{\gamma\in\Z^d} |\Psi_k(\cdot-\gamma)| }_{\infty}^{1/u}\, \\
&\quad\qquad\qquad\qquad \cdot \norm{f}_{s,p, \infty} |M|^{1-j s'} / (1-|M|^{-s'} ) \le C |M|^{-j s'} \label{BesovApproxError}
\end{split} \end{align}
with the definition $s' = s+(1/p' - 1/p) \wedge 0$. Note that $s' > 0$ as $s > 1/p$. The constant $C$ depends on the matrix $M$, the wavelets, $f$ and if $p<p'$ additionally on the domain $A$. The estimation error is given in Theorem~\ref{LpConvergence}. The growth rate of $j$ equalizes the rates of the terms $|M|^{-js'}$ and $|M|^{j/2} |I_n|^{1/2}$; both behave as $|I_n|^{-s'/(2s'+1)}$. This implies in particular that the term $ |M|^{j(1-1/(2p'))} q_n^{N(1-1/p') } \big/ |I_n|^{1-1/(2p')}  $, which appears in the case $p'>2$, is negligible. This proves the first statement of this theorem.

The amendment concerning the rate of convergence of the supremum $\sup_{f\in F_{s,p,q}(K,A)}  \E{ \int_{\R^d} |f - \tilde{P}_j f |^{p'}}^{1/p'}$ can be easily verified now. Since the support of $f$ is contained in a bounded set, the integrability requirement for the dominating function $h$ is satisfied because the requirement $\norm{f}_{s,p,q}\le K$ implies a uniform bound on the maximum norm of $f$. We only need that the mutual dependence between the variables $Z(s)$ and $Z(t)$ is as required in Condition~\ref{regCond0}~\ref{DensityRequirement}. This however follows by the assumptions of the amendment.
\end{proof} 

\begin{proof}[Proof of Theorem~\ref{DifferentiableDensity}]
We prove that the approximation error is in $\cO\left(		(\zeta_{min})^{-j} \right)$; the claim follows then with an application of Theorem~\ref{LpConvergence}. Since the father and mother wavelets $\Psi_k$ are compactly supported on $[0,L]^d$, there are at most $(2L+1)^d$ wavelets not equal to zero for fix $x\in\R^d$. Hence, for all $j \in \Z$ and $k\in \{1,\ldots,|M|-1\}$
\begin{align*}
		\int_{\R^d} \bigg| \sum_{\gamma\in \Z^d} \upsilon_{k,j,\gamma}\, \Psi_{k,j,\gamma} \bigg|^{p'} \intd{\lambda^d} \le (2L+1)^{d p'} \norm{\Psi_k}_{p'}^{p'} |M|^{j({p'}/2-1)} \sum_{\gamma \in \Z^d} \left|\upsilon_{k,j,\gamma} \right|^{p'} = \cO\left((\zeta_{min})^{-j {p'}}		\right).
\end{align*} 
Here we use the following bound on the wavelet coefficients $\upsilon_{k,\ell,\gamma}$
\begin{align}
		|\upsilon_{k,j,\gamma}|^{p'} &\le |M|^{-jp/2}\, \norm{\Psi_k}_1^{p'}\, \sup \left\{ |f(x)-f(y)| : x,y \in \supp{\Psi_{k,j,\gamma}} \right \} ^{p'} \nonumber \\
		&\le |M|^{-j {p'}/2}\, \norm{\Psi_k}_1^{p'}\, \left[ \sup\left\{ \bar{h} \left(M^{-j}(u+\gamma) \right): u \in [0,L]^d \right\} \, \norm{M^{-j} }_2\, \sqrt{d} L\right]^{p'}. \nonumber
\end{align}
Thus, the approximation error is bounded by	$\norm{ f-P_j f }_{p'}  \le \sum_{k=1}^{|M|-1} \sum_{\ell =j}^{\infty}\norm{ \sum_{\gamma \in \Z^d} \upsilon_{k,\ell,\gamma} \, \Psi_{k,\ell,\gamma} }_{p'} = \cO\left(	(\zeta_{min})^{-j} \right)$.
\end{proof}

We prove now the statement concerning the rate of convergence of the hard thresholding density estimator.
\begin{proof}[Proof of Theorem \ref{LPconvergenceHardThresholding}]
Write the approximation w.r.t.\ to the $j_1$-th and $j_0$-th level as
\begin{align*}
		Q_{j_0,j_1} f &= P_{j_1} f = \sum_{\gamma\in\Z^d} \theta_{j_0,\gamma} \Phi_{j_0,\gamma} + \sum_{k=1}^{|M|-1} \sum_{j = j_0}^{j_1 - 1} \upsilon_{k,j,\gamma} \Psi_{k,j,\gamma}.
\end{align*}
We decompose the error as follows
\begin{align}
	\E{ \norm{ f - \tilde{Q}_{j_0,j_1} f }_{p'}^{p'} }^{1/p'} & \le \norm{ f - Q_{j_0,j_1} f }_{p'}  +  \E{ \norm{ \sum_{\gamma\in\Z^d} (\hat{\theta}_{j_0,\gamma} - \theta_{j_0,\gamma} )\, \Phi_{j_0,\gamma} }_{p'}^{p'}}^{1/p' } \nonumber \\
	&\quad  + \sum_{k=1}^{|M|-1} \sum_{j=j_0}^{j_1-1} \E{ \norm{  \sum_{\gamma\in\Z^d}  \left(\hat{\upsilon }_{k,j,\gamma} \1{  |\hat{\upsilon }_{k,j,\gamma}| > \bar{\lambda}_{j}  } - \upsilon_{k,j,\gamma} \right)  \, \Psi_{k,j,\gamma} 	}_{p'}^{p'}}^{1/p' }  \nonumber \\
	&=: J_1 + J_2 + J_3  \label{HardThresholdingEq1}
\end{align}
and consider these three terms separately. We infer from Theorem~\ref{LinearDensityEstimationBesovFunction} that the approximation error $J_1$ is at most
$
	|M|^{-j_1 s'} \simeq |I_n|^{-\alpha}
$
times a constant which only depends on the domain $A$, the parameters of the Besov space, the wavelets $\Psi_k$ and the Besov norm $\norm{f}_{s,p,\infty}\le\norm{f}_{s,p,q}$; for its exact value see \eqref{BesovApproxError}. 

For linear estimation error $J_2$, we use Theorem~\ref{LpConvergence}. So, $J_2 \le C |M|^{j_0/2}/|I_n|^{1/2} \simeq |I_n|^{-1/2}$. We consider the nonlinear details term in the estimation error which is the third term on the RHS of \eqref{HardThresholdingEq1} and which constitutes the main error. It can be decomposed and bounded as follows
\begin{align}
		J_3 &\le  (2L+1)^{d(p'-1)/p'}  \sum_{k=1}^{|M|-1} \sum_{j = j_0}^{j_1 - 1} |M|^{j(1/2 - 1/p')} \norm{ \Psi_k}_{p'} \nonumber \\ 
		\begin{split} \label{HardThresholdingEq2}
		&\quad\quad  \cdot \left\{ 	 \left(\sum_{\gamma\in\Z^d}  |\upsilon_{k,j,\gamma} |^{p'}\, \I\{ |\upsilon_{k,j,\gamma} | \le 2\bar{\lambda}_j \}  + \left(\sum_{\gamma\in\Z^d} \p\left( | \hat{\upsilon}_{k,j,\gamma} - \upsilon_{k,j,\gamma} | > \bar{\lambda}_j \right) | \upsilon_{k,j,\gamma} |^{p'}\right)^{1/p'} \right)^{1/p'} \right.  \\		
			&\quad\qquad\qquad 	+	 \left(\sum_{\gamma\in\Z^d} \E{ | \hat{\upsilon}_{k,j,\gamma} - \upsilon_{k,j,\gamma} |^{p'}\, \I\{ |\hat{\upsilon}_{k,j,\gamma} - \upsilon_{k,j,\gamma} | > \bar{\lambda}_j/2 \} }	\right)^{1/p'}  \\				
			&\quad \qquad\qquad+ \left. \left(\sum_{\gamma\in\Z^d} \E{| \hat{\upsilon}_{k,j,\gamma} - \upsilon_{k,j,\gamma} |^{p'}\, \I\{ |\upsilon_{k,j,\gamma} | > \bar{\lambda}_j/2 \}}	\right)^{1/p'} \right \}.
				\end{split}
\end{align}
We derive the rates of convergence for each term in \eqref{HardThresholdingEq2} separately, many techniques are quite similar to the classical proof given by \cite{donoho1996}.
The first error in \eqref{HardThresholdingEq2} is the dominating error. If $p' > p$, it can be bounded as
\begin{align}
		& \sum_{k=1}^{|M|-1} \sum_{j=j_0}^{j_1-1} |M|^{j(1/2-1/p')} \left(\sum_{\gamma\in\Z^d} |\upsilon_{k,j,\gamma}|^{p} \, (2\bar{\lambda}_j)^{p'-p} \, 1\left\{ |\upsilon_{k,j,\gamma}| \le 2\bar{\lambda}_j \right\}		\right)^{1/p'} \nonumber \\
		&\le \sum_{k=1}^{|M|-1} \sum_{j=j_0}^{j_1-1} |M|^{j(1/2-1/p')} \, (2\bar{\lambda}_j)^{(p'-p)/p'} \, |M|^{-j(s+1/2-1/p)p/p'} \,  \norm{f}_{s,p,\infty}^{p/p'} \nonumber \\
		&\le C \norm{f}_{s,p,\infty}^{p/p'} |I_n|^{-(p'-p)/(2p') } \sum_{k=1}^{|M|-1} \sum_{j=j_0}^{j_1 - 1}  j^{(p'-p)/(2p') } |M|^{-j \epsilon / p'}. \label{HardThresholdingEq3} 
\end{align}
If $\epsilon\neq 0$, \eqref{HardThresholdingEq3} is bounded by
\begin{align*}
		 |I_n|^{-(p'-p)/(2p') } |M|^{\max( -j_0 \epsilon/p', -j_1 \epsilon/p')} j_1^{(p'-p)/(2p')} \simeq |I_n|^{-\alpha} \left( \ln |I_n|\right)^{(p'-p)/2p'}. 
\end{align*}
If $\epsilon=0$, \eqref{HardThresholdingEq3} is bounded by $|I_n|^{-(p'-p)/(2p')} (j_1-j_0) j_1^{(p'-p)/(2p')} \simeq |I_n|^{-\alpha} \left( \ln |I_n|\right)^{(3p'-p)/2p'}$.

We treat the first error term in \eqref{HardThresholdingEq2} in the case $p\ge p'$. We have $\epsilon>0$ and $s=s'$. Moreover, the density has bounded support. We find in this case
\begin{align}
		\sum_{k=1}^{|M|-1} \sum_{j=j_0}^{j_1 - 1} |M|^{j(1/2-1/p')} \norm{ \upsilon_{k,j,\cdot} }_{\ell^{p'} } \le C_A \norm{f}_{s,p,\infty} \sum_{k=1}^{|M|-1} \sum_{j=j_0}^{j_1 - 1} |M|^{-js}, \label{HardThresholdingEq4}
\end{align}
where $C_A$ is the constant which depends on the support of $f$ and which is introduced in the proof of Theoerem~\ref{LinearDensityEstimationBesovFunction}. Consequently, this last inequality behaves as $|M|^{-j_0 s} = |I_n|^{-s/(1+2s)} = |I_n|^{-\alpha}$.

For the remaining three errors from \eqref{HardThresholdingEq2}, we need two bounds. Firstly, we prove that given the growth rate of $j_1$
\begin{align}\label{HardThresholdingEq5}
\sup_{\gamma\in \Z^d } \E{ | \hat{\upsilon}_{k,j,\gamma} - \upsilon_{k,j,\gamma} |^{p'} }^{1/p'}\le \sup_{\gamma\in \Z^d } \E{ | \hat{\upsilon}_{k,j,\gamma} - \upsilon_{k,j,\gamma} |^{2p'} }^{1/2p'} \le C |I_n|^{-1/2}.
\end{align}
We know from Lemma~\ref{LemmaMomentInequalityWavelets} that the leftmost expectation of \eqref{HardThresholdingEq5} is bounded by
$$
		C \left( |I_n|^{-1/2} + \frac{|M|^{j_1/2} q_n^N}{|I_n|} + \frac{ |M|^{j_1/2} }{|I_n|^{c_1B/(2p')}} \right)
$$
where $q_n = B \ln |I_n|$ for a $B > 0$ arbitrary but fixed and where the constant $C$ depends on $B$. Hence, if we choose $B$ sufficiently large, namely,
$$
		B\coloneqq \frac{2p' +1 + \alpha/(2s') }{c_1},
$$
it only remains to show that the term in the middle is negligible. It corresponds to $|I_n|^{\alpha/(2s')-1} (\ln |I_n|)^N$. 

Consequently, we only need that $1-\alpha/s' >0$. If $\epsilon<0$, then $1-\alpha/s' = 2 (sp-1)/(p+2(sp-1)) > 0$ because by assumption $sp>1$. Moreover, if $\epsilon \ge 0$, we use that $1/p' \ge 1/(2sp +p)$. The relation $1-\alpha/s' > 0 $ is equivalent to $s'(1+2s) > s$. Now $s'\ge s + 1/(2sp +p)-1/p$. Thus,	$s' (1+2s) \ge s ( 2s+1-2/p) > s$, where we use again that $s>1/p$. All in all, we find that \eqref{HardThresholdingEq5} is true.

Secondly, we show that $\p(|\hat{\upsilon}_{k,j,\gamma}-\upsilon_{k,j,\gamma}|>\bar{\lambda}_j )^{1/(2p')}$ vanishes at a rate $|I_n|^{-C_{p'} K_0^2}$ which is negligible given the choice of $K_0$, which is defined below in \eqref{hardThresChoiceK0}. We infer from Lemma~\ref{LemmaExpInequalityWavelets} that this probability can be bounded by
$$
		2\exp\left( - \frac{|I_n| \bar{\lambda}_j^2 / (2p') }{A_2 + A_1 q_n^N |M|^{j/2} \bar{\lambda}_j} \right) + A_3 \left( \frac{|M|^{j/2}}{\bar{\lambda}_j |I_n|^{c_1 B - 1/2} } \right)^{1/(2p')}
$$
for certain constants $A_1,\ldots,A_3$ independent of $I_n$ and $j$. So it remains to compute the asymptotics of the following three terms:
\begin{align}\begin{split}\label{HardThresholdingEq6}
		\frac{|I_n| \bar{\lambda}_j^2  }{2p' A_2} &\ge \frac{K_0^2 j_0}{2p' A_2} \simeq \frac{K_0^2 (1-2\alpha)}{ (2p'  \ln |M| ) } \ln |I_n| \\
		\frac{|I_n| \bar{\lambda}_j^2 }{q_n^N |M|^{j/2} \bar{\lambda}_j } &\ge C K_0 |I_n|^{(1-\alpha/s')/2} \ln|I_n|^{1/2-N} \\
		\frac{|M|^{j/(4p')}}{\bar{\lambda}_j^{1/(2p')} |I_n|^{(c_1 B - 1/2)/(2p')} } &\le \left(K_0^{-1} j_1^{-1/2} |I_n|^{-(c_1 B - 1 - \alpha/(2s') )} \right)^{1/(2p')}
\end{split}\end{align}
We see that the error on the second line of \eqref{HardThresholdingEq6} is negligible because $1-\alpha/s'>0$. The error on the third line vanishes at a rate greater than $|I_n|^{-1}$ and is negligible as well. Hence, the choice 
\begin{align}\label{hardThresChoiceK0}
		K_0^2 \simeq p' \ln |M| / (1-2\alpha)
\end{align}
implies that the probability in question decays at a rate of at least $|I_n|^{-1}$, i.e.,
\begin{align}\label{HardThresholdingEq7}
				\p(|\hat{\upsilon}_{k,j,\gamma}-\upsilon_{k,j,\gamma}|>\bar{\lambda}_j )^{1/(2p')} \le C |I_n|^{-1}.
\end{align}
We use the norm inequalities in the $\ell^{p'}$-spaces in both cases $p' \ge p$ and $p' < p$ to bound the second error in \eqref{HardThresholdingEq2}:
\begin{align}
	&\sum_{k=1}^{|M|-1}  \sum_{j = j_0}^{j_1 - 1} |M|^{j(1/2 - 1/p')} \left(\sum_{\gamma\in\Z^d} \p\left( | \hat{\upsilon}_{k,j,\gamma} - \upsilon_{k,j,\gamma} | > \bar{\lambda}_j \right) | \upsilon_{k,j,\gamma} |^{p'}\right)^{1/p'} \nonumber \\
	&\le C C_A \sum_{k=1}^{|M|-1}  \sum_{j = j_0}^{j_1 - 1} |M|^{j(1/2 - 1/p')} \, |M|^{j(1/p' - 1/p)^+} \norm{ \upsilon_{k,j,\cdot} }_{\ell^{p}} \, \p\left( | \hat{\upsilon}_{k,j,\gamma} - \upsilon_{k,j,\gamma} | > \bar{\lambda}_j \right)^{1/p'} \nonumber \\
	&\le C C_A \norm{f}_{s,p,\infty} \,  \sum_{k=1}^{|M|-1}  \sum_{j = j_0}^{j_1 - 1} |M|^{-js'} \p\left( | \hat{\upsilon}_{k,j,\gamma} - \upsilon_{k,j,\gamma} | > \bar{\lambda}_j \right)^{1/p'}. \nonumber
\end{align}
Consequently, this second error is negligible if we use the result from \eqref{HardThresholdingEq7}.

For the third error in \eqref{HardThresholdingEq2} we need the estimate from Proposition~\ref{IntegrabilityForDependentSums}
\begin{align*}
		\sum_{\gamma\in\Z^d}	\E{ | \hat{\upsilon}_{k,j,\gamma} - \upsilon_{k,j,\gamma} |^{2p'}}^{1/2} &\le \sum_{\gamma\in\Z^d}	\E{ | \hat{\upsilon}_{k,j,\gamma} - \upsilon_{k,j,\gamma} |^{2}}^{1/4} \E{ | \hat{\upsilon}_{k,j,\gamma} - \upsilon_{k,j,\gamma} |^{2(2p'-1)}}^{1/4} \\
		&\le C_{p'} \left\{ \left( \norm{h^{1/8} }_1 + \norm{\tilde{h}^{1/2} }_1 \right) \norm{\Psi_k}_\infty^{1/2} |M|^j \, |I_n|^{-1/4}  \right\} |I_n|^{-(2p'-1)/4}.
\end{align*}
Now we obtain, using H{\"o}lders inequality in both cases $p' \ge p$ and $p' < p$
\begin{align*}
		&\sum_{k=1}^{|M|-1}  \sum_{j = j_0}^{j_1 - 1} |M|^{j(1/2 - 1/p')} \left(\sum_{\gamma\in\Z^d}	\E{ | \hat{\upsilon}_{k,j,\gamma} - \upsilon_{k,j,\gamma} |^{2p'}}^{1/2} \p\left( | \hat{\upsilon}_{k,j,\gamma} - \upsilon_{k,j,\gamma} | > \bar{\lambda}_j / 2 \right)^{1/2} 	\right)^{1/p' }  \\
		&\le C (j_1-j_0) |I_n|^{-(1-\alpha/s') - 1/2} .
\end{align*}
Consequently, this error is negligible.

The fourth error in \eqref{HardThresholdingEq2} can be treated similar. We use that $\E{| \hat{\upsilon}_{k,j,\gamma} - \upsilon_{k,j,\gamma} |^{p'} }^{1/p'} \le C |I_n|^{-1/2}$ from Equation~\eqref{HardThresholdingEq5}. If $p' > p$, 
\begin{align}
		&\sum_{k=1}^{|M|-1}  \sum_{j = j_0}^{j_1 - 1} |M|^{j(1/2 - 1/p')} \left(\sum_{\gamma\in\Z^d} \E{| \hat{\upsilon}_{k,j,\gamma} - \upsilon_{k,j,\gamma} |^{p'} } \, \1{ |\upsilon_{k,j,\gamma} | > \bar{\lambda}_j/2 }	\right)^{1/p'} \nonumber \\
		&\le C \sum_{k=1}^{|M|-1}  \sum_{j = j_0}^{j_1 - 1} |M|^{j(1/2 - 1/p')} \, |I_n|^{-1/2}  \, \norm{ \upsilon_{k,j,\cdot}}_{\ell^p}^{p/p'} (\bar{\lambda}_j/2)^{-p/p'} \nonumber \\
		&\le C \norm{f}_{s,p,\infty}^{p/p'}\, |I_n|^{-(p'-p)/(2p')} \sum_{k=1}^{|M|-1} \sum_{j=j_0}^{j_1-1} |M|^{-j\epsilon/p'} j^{-p/(2p')}. \label{HardThresholdingEq9}
\end{align}
Note that \eqref{HardThresholdingEq9} is asymptotically less than the first nonlinear details term given in \eqref{HardThresholdingEq3} and can be neglected. In the case $p'\le p$, this error term can be bounded by 
$$
		\norm{f}_{s,p,\infty} \sum_{k=1}^{|M|-1} \sum_{j=j_0}^{j_1-1} |M|^{-j s} j^{-1/2} 
$$
times a constant. This follows similarly as the derivation of \eqref{HardThresholdingEq9}. In particular, this error is negligible too, when compared to the first details term in the case $p'\le p$, see \eqref{HardThresholdingEq4}. 

The amendment concerning the uniform convergence follows along the same lines as in the case for the linear estimator, see the proof of Theorem~\ref{LinearDensityEstimationBesovFunction}. This finishes the proof.
\end{proof}

\appendix

\section{Exponential and moment inequalities for dependent sums}\label{Appendix_Exponential inequalities}

\begin{proposition}\label{IntegrabilityForDependentSums}
Assume the real valued random field $Z$ to satisfy Condition \ref{regCond0}.
\begin{enumerate}

\item If Condition~\ref{regCond0}~\ref{DensityRequirement} is satisfied with $a=1$, then for all $j\in\Z$ and $\gamma\in\Z^N$
$$
	\left(\sum_{\gamma\in\Z^N} \E{ |\hat{\theta}_{j,\gamma} - \theta_{j,\gamma} |^2 } \right)^{1/2}\le C_1 L^{d/2} 2^{d/2} \norm{\Phi}_\infty \left\{   \norm{h^{1/2}}_1^{1/2} +  \norm{\tilde{h} }_2 \right\} \frac{|M|^{j/2}}{|I_n|^{1/2} }.
$$

\item If Condition~\ref{regCond0}~\ref{DensityRequirement} is satisfied with $a=2$, then for all $j\in\Z$ and $\gamma\in\Z^N$
$$
	\sum_{\gamma\in\Z^N} \E{ |\hat{\theta}_{j,\gamma} - \theta_{j,\gamma} |^2 }^{1/2} \le C_2 L^{d/2} 2^d \norm{\Phi}_\infty \left\{ \norm{h^{1/4}}_1 +  \norm{\tilde{h} }_1 \right\} \frac{|M|^{j}}{|I_n|^{1/2} }.
$$

\item If Condition~\ref{regCond0}~\ref{DensityRequirement} is satisfied with $a=4$, then for all $j\in\Z$ and $\gamma\in\Z^N$
$$
	\sum_{\gamma\in\Z^N} \E{ |\hat{\theta}_{j,\gamma} - \theta_{j,\gamma} |^2 }^{1/4} \le C_4 L^{d/4} 2^d \norm{\Phi}_\infty^{1/2} \left\{ \norm{h^{1/8}}_1 +  \norm{\tilde{h}^{1/2} }_1 \right\} \frac{|M|^{j}}{|I_n|^{1/4} }.
$$

\end{enumerate}
In all cases the constants $C_1,C_2,C_4\in \R_+$ do not depend on $n\in\N_+^N$. They depend on the bound of the mixing coefficients determined by the numbers $c_0$ and $c_1$ and on the data dimension $d$.

Moreover, the same result is true for $\E{ |\hat{\upsilon}_{k,j,\gamma} - \upsilon_{k,j,\gamma} |^2 } $ for all $k=1,\ldots,|M|$, $j\in\Z$ and $\gamma\in\Z^N$
\end{proposition}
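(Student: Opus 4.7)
The core of the proof is a variance decomposition for the empirical coefficient $\hat{\theta}_{j,\gamma}=|I_n|^{-1}\sum_{s\in I_n}\Phi_{j,\gamma}(Z(s))$. Writing
\[
\E{|\hat{\theta}_{j,\gamma}-\theta_{j,\gamma}|^2}=|I_n|^{-2}\sum_{s,t\in I_n}\operatorname{Cov}\bigl(\Phi_{j,\gamma}(Z(s)),\,\Phi_{j,\gamma}(Z(t))\bigr),
\]
I would split the double sum by $\norm{s-t}_\infty$ into three regimes: the diagonal $s=t$, the near off-diagonal $0<\norm{s-t}_\infty\le k_n$, and the far off-diagonal $\norm{s-t}_\infty>k_n$, for a parameter $k_n$ to be tuned. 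On the diagonal I use $f\le h$ to get $\Var(\Phi_{j,\gamma}(Z(0)))\le\int\Phi_{j,\gamma}^2\,f\,\intd{\lambda^d}\le\int\Phi_{j,\gamma}^2\,h\,\intd{\lambda^d}$. For near pairs, Condition~\ref{regCond0}~\ref{DensityRequirement} gives the uniform estimate $|\operatorname{Cov}|\le(\int|\Phi_{j,\gamma}|\,\tilde h\,\intd{\lambda^d})^2$, and the number of such pairs is at most $|I_n|\,k_n^N$. For far pairs I apply a Davydov--Rio covariance inequality in a suitable $L^p$-space combined with the exponential decay $\alpha(k)\le c_0 e^{-c_1 k}$; choosing $k_n$ logarithmic in $|I_n|$ makes this contribution of order $o(|I_n|^{-1})$, negligible next to the diagonal and near terms. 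The resulting per-$\gamma$ variance bound is
\[
\Var(\hat{\theta}_{j,\gamma})\le \frac{C}{|I_n|}\int\Phi_{j,\gamma}^2\, h\,\intd{\lambda^d}\;+\;\frac{C\,k_n^N}{|I_n|}\Bigl(\int|\Phi_{j,\gamma}|\,\tilde h\,\intd{\lambda^d}\Bigr)^2\;+\;o(|I_n|^{-1}).
\]

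For the case $a=1$ I sum $\Var(\hat{\theta}_{j,\gamma})$ over $\gamma$ directly. The diagonal uses the finite-overlap inequality $\sum_\gamma\Phi_{j,\gamma}(x)^2\le(L+1)^d|M|^j\norm{\Phi}_\infty^2$ together with the pointwise reduction $h\le\norm{h}_\infty^{1/2}h^{1/2}$, giving $\sum_\gamma\int\Phi_{j,\gamma}^2 h\,\intd{\lambda^d}\le C(L+1)^d|M|^j\norm{\Phi}_\infty^2\,\norm{h^{1/2}}_{L^1(\lambda^d)}$. For the near off-diagonal sum I exploit the kernel identity $K_j(x,y)=\sum_\gamma\Phi_{j,\gamma}(x)\Phi_{j,\gamma}(y)$, which is supported in $\{|M^j(x-y)|_\infty\le L\}$ with $|K_j|\le(L+1)^d|M|^j\norm{\Phi}_\infty^2$ there; a change of variable $u=M^j(x-y)$ and Cauchy--Schwarz on $\tilde h\ast\tilde h$ then yield $\sum_\gamma(\int|\Phi_{j,\gamma}|\,\tilde h)^2\le C L^{2d}\norm{\Phi}_\infty^2\,\norm{\tilde h}_{L^2(\lambda^d)}^2$. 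Taking a square root delivers statement~(1).

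For the cases $a=2,4$ the quantity to be bounded is $\sum_\gamma\E{|\hat{\theta}_{j,\gamma}-\theta_{j,\gamma}|^2}^{1/(2a)}$, so I first raise the per-$\gamma$ variance bound to the power $1/(2a)$ and then sum. The essential new ingredient is a lattice-sum estimate: if $g\colon\R^d\to\R_{\ge 0}$ is bounded, radial, and non-increasing, and $x_\gamma^\star$ denotes the point of $\supp\Phi_{j,\gamma}=M^{-j}(\gamma+[0,L]^d)$ closest to the origin, then by radial monotonicity $g(x_\gamma^\star)=\sup_{x\in\supp\Phi_{j,\gamma}}g(x)$, and a comparison with the lattice cell immediately closer to the origin yields
\[
\sum_{\gamma\in\Z^d} g(x_\gamma^\star)\le C\,|M|^j\,\norm{g}_{L^1(\lambda^d)}.
\]
Applied to $g=h^{1/(2a)}$ for the diagonal term (after the reduction $h\le\norm{h}_\infty^{1-1/(2a)}h^{1/(2a)}$ and pulling $\norm{\Phi_{j,\gamma}}_\infty^2=|M|^j\norm{\Phi}_\infty^2$ out of $\int_{\supp\Phi_{j,\gamma}}h^{1/(2a)}\le L^d|M|^{-j}h^{1/(2a)}(x_\gamma^\star)$) and to $g=\tilde h^{1/a}$ for the near off-diagonal term, this produces the prefactor $|M|^j/|I_n|^{1/(2a)}$ with the integrability norms $\norm{h^{1/(2a)}}_{L^1}$ and $\norm{\tilde h^{1/a}}_{L^1}$ (which for $a=4$ is exactly $\norm{\tilde h^{1/2}}_{L^1}$, matching the statement).

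The main obstacle is the bookkeeping in the $a=2,4$ cases: summing $(2a)$-th roots of variances over the infinite lattice forces a transition from $L^p$-type to $\ell^1$-type sums in $\gamma$, and the exponents on $h$ and $\tilde h$ must be tracked carefully so that the lattice-sum estimate is compatible with the integrability hypotheses $\norm{h^{1/(2a)}}_{L^1}<\infty$ and $\norm{\tilde h^{1/a}}_{L^2}<\infty$ from Condition~\ref{regCond0}~\ref{DensityRequirement}. The analogous claim for $\hat{\upsilon}_{k,j,\gamma}$ follows verbatim since the mother wavelets $\Psi_k$ share the same compact-support and boundedness assumptions as $\Phi$.
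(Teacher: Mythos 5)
Your skeleton coincides with the paper's (split the covariance sum into diagonal, near and far pairs; bound near pairs via $|\operatorname{Cov}|\le(\int|\Phi_{j,\gamma}|\,\tilde h\,\intd{\lambda^d})^2$ from Condition~\ref{regCond0}~\ref{DensityRequirement}; bound far pairs via Davydov plus exponential mixing; sum over $\gamma$ with the radial--monotone lattice estimate $\sum_\gamma g(M^{-j}y^*_\gamma)\le C\,2^d\,|M|^j\norm{g}_{L^1(\lambda^d)}$), but your tuning of the cutoff is where the argument genuinely breaks. The proposition is claimed for all $j$ and all $n$ with constants independent of $n$ (and of $j$), and the choice $k_n\sim\ln|I_n|$ does not deliver this. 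In the near regime, the pair count $|I_n|k_n^N$ together with $(\int|\Phi_{j,\gamma}|\,\tilde h\,\intd{\lambda^d})^2\lesssim |M|^{-j}\norm{\Phi}_\infty^2 L^{2d}\,\tilde h(M^{-j}y^*_\gamma)^2$ gives, after the $\gamma$-summation, a contribution of order $(\ln|I_n|)^N\norm{\tilde h}_2^2/|I_n|$; this is not $\lesssim |M|^j/|I_n|$ when $j$ stays bounded as $n$ grows, so your bound carries a spurious $(\ln|I_n|)^N$ factor (and its $1/a$-th power in parts (2)--(3)). In the far regime the claim ``$o(|I_n|^{-1})$'' is not justified either: the Davydov bound with fourth moments contains the factor $\E{\Phi_{j,\gamma}(Z(0))^4}^{1/2}\sim |M|^{j/2}\norm{\Phi}_\infty^2 L^{d/2}h(M^{-j}y^*_\gamma)^{1/2}$, so after summing in $\gamma$ the far term is of order $|M|^{3j/2}\,|I_n|^{-1}e^{-c'k_n}$, and a logarithmic $k_n$ controls this only when $|M|^{j/2}$ is bounded by a fixed power of $|I_n|$, i.e.\ only under a coupling of $j$ and $n$ that the statement does not assume (and the constant would then depend on that coupling). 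The paper's fix is precisely the cutoff $\norm{s-t}>|M|^{j/N}$: the near-pair count is then $\lesssim |I_n|\,|M|^j$ and this $|M|^j$ cancels the $|M|^{-j}$ from the squared integral, producing the clean order $|M|^j/|I_n|$ with no logarithms, while in the far regime one absorbs $|M|^{j/2}\le k^N$ (valid for every $k>|M|^{j/N}$) into the exponentially convergent series $\sum_{k\ge1}k^{2N-1}\alpha(k)^{1/2}<\infty$, so the far term has the same order as the diagonal with a constant depending only on $c_0,c_1,N,d$.

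Two smaller points on the $a=2,4$ cases: the quantity to bound is $\sum_\gamma\E{|\hat\theta_{j,\gamma}-\theta_{j,\gamma}|^2}^{1/a}$, not $(\cdot)^{1/(2a)}$, and raising the per-$\gamma$ variance to the power $1/a$ yields the prefactor $|I_n|^{-1/a}$ of the statements; the diagonal then produces $h^{1/a}\le\norm{h}_\infty^{1/(2a)}h^{1/(2a)}$, while the near term produces $\tilde h^{2/a}$ (not $\tilde h^{1/a}$), which is exactly why $\norm{\tilde h}_1$ appears for $a=2$ and $\norm{\tilde h^{1/2}}_1$ for $a=4$, consistent with $\norm{\tilde h^{1/a}}_{L^2}^2=\norm{\tilde h^{2/a}}_{L^1}$ from Condition~\ref{regCond0}~\ref{DensityRequirement}. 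These exponent slips are repairable, and the remaining ingredients (diagonal bound via $f\le h$, the overlap/kernel estimate giving $\sum_\gamma(\int|\Phi_{j,\gamma}|\,\tilde h\,\intd{\lambda^d})^2\le CL^{2d}\norm{\Phi}_\infty^2\norm{\tilde h}_2^2$, and the verbatim extension to the mother wavelets $\Psi_k$) are sound and agree with the paper's proof.
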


\begin{proof}[Proof of Proposition~\ref{IntegrabilityForDependentSums}]
We only prove the statement concerning the coefficients $\theta_{j,\gamma}$ and assume w.l.o.g. that $j>0$. We begin with the decomposition of the variance 
\begin{align}\begin{split}\label{IntegrabilityForDependentSumsEq1}
		\E{ |\hat{\theta}_{j,\gamma} - \theta_{j,\gamma} |^2 } &\le |I_n|^{-2} \sum_{s\in I_n} \Phi_{j,\gamma}^2 (Z(s)) + |I_n|^{-2} \sum_{ \substack{s,t\in I_n, \\ \norm{s-t} > |M|^{j/N} } } \operatorname{Cov}(\Phi_{j,\gamma}(Z(s)),\Phi_{j,\gamma}(Z(t)) ) \\
		&\quad + |I_n|^{-2} \sum_{ \substack{s,t\in I_n, \\ \norm{s-t} \le |M|^{j/N} } } \operatorname{Cov}(\Phi_{j,\gamma}(Z(s)),\Phi_{j,\gamma}(Z(t)) )
	\end{split}	\end{align}
We easily find that the first summand in \eqref{IntegrabilityForDependentSumsEq1} is at most
\begin{align}\label{IntegrabilityForDependentSumsEq2}
		\frac{ \norm{\Phi}_{\infty}^2 }{ |I_n| } \int_{\R^d} \1{\supp \Phi(\cdot-\gamma)} f(M^{-j}y) \intd{y} \le \frac{ \norm{\Phi}_{\infty}^2 }{ |I_n| } \int_{\R^d} \1{\supp \Phi(\cdot-\gamma)} h(M^{-j}y) \intd{y} .
\end{align}
Consider the second summand, here we apply the inequality of \cite{davydov1968convergence} to bound the covariance by the fourth moments times the mixing coefficient. We obtain the upper bound
\begin{align}
			|&I_n|^{-2}  \sum_{ \substack{s,t\in I_n, \\ \norm{s-t} > |M|^{j/N} } } \E{ \Phi_{j,\gamma}^4 (Z(s))}^{1/2} \alpha(\norm{s-t})^{1/2} \nonumber \\
			&\le \frac{ |M|^{j/2} \norm{\Phi}_{\infty}^2 }{ |I_n|^2 } \sum_{ \substack{s,t\in I_n, \\ \norm{s-t} > |M|^{j/N} } } \left(	\int_{\R^d} \1{\supp \Phi(\cdot-\gamma)} f(M^{-j}y )	\intd{y} \right)^{1/2} \alpha(\norm{s-t})^{1/2} \nonumber \\
			&\le C \frac{ \norm{\Phi}_{\infty}^2 }{ |I_n| } \left(	\int_{\R^d} \1{\supp \Phi(\cdot-\gamma)} h(M^{-j}y )	\intd{y} \right)^{1/2} \sum_{k> |M|^{j/N} } k^{2N-1} \alpha(k)^{1/2},\label{IntegrabilityForDependentSumsEq3}
\end{align}
where we use in the last inequality that $\sum_{ s,t\in I_n, \norm{s-t} > |M|^{j/N} }  \alpha(\norm{s-t})^{1/2} \le C|I_n| \sum_{k>|M|^{j/N}} k^{N-1} \alpha(k)^{1/2}$ for a constant $C$. The third summand can be bounded with the help of the requirement on the joint densities, by assumption we have for all locations $s,t\in\Z^N$ that
$$
			| f_{Z(s),Z(t)} (z_1,z_2) - f(z_1)f(z_2)| \le \tilde{h}(z_1) \tilde{h}(z_2)
$$
for a non-increasing radial function $\tilde{h}$. Consequently, we obtain
\begin{align}\label{IntegrabilityForDependentSumsEq4}
			&|I_n|^{-2} \sum_{ \substack{s,t\in I_n, \\ \norm{s-t} \le |M|^{j/N} } }  \int_{\R^d\times\R^d} |M|^{j} |\Phi(M^j z_1 - \gamma)| |\Phi(M^j z_2 - \gamma)|\, | f_{Z(s),Z(t)} (z_1,z_2) - f(z_1)f(z_2)| \intd{z_1} \intd{z_2} \nonumber \\
			&\le |I_n|^{-1} \norm{\Phi}_{\infty}^2 \int_{\R^d\times\R^d} \1{ \supp \Phi(\cdot - \gamma) \times \supp \Phi(\cdot - \gamma) } (z_1,z_2) \, \tilde{h}(M^{-j} z_1) \tilde{h}(M^{-j} z_2) \intd{z_1}\intd{z_2}.
\end{align}
Note that we have used the relation $\sum_{ {s,t\in I_n, \\ \norm{s-t} \le |M|^{j/N} } } |M|^{-j} \le |I_n|$ in the derivation of the last inequality.

It remains to bound the sum of the variances $\E{ |\hat{\theta}_{j,\gamma} - \theta_{j,\gamma} |^2 }$, respectively the sum of the corresponding standard deviations, respectively the sum of the square root of the standard deviations.. We use the following concept on the integrals from \eqref{IntegrabilityForDependentSumsEq2}, \eqref{IntegrabilityForDependentSumsEq3} and \eqref{IntegrabilityForDependentSumsEq4}: the support of $\Phi(\cdot-\gamma)$ is the cube $[\gamma,\gamma+L e_N]$. Let $y^*_{\gamma}$ be among the points $y$ in this cube such that $M^{-j} y$ is nearest to the origin, i.e., $y^*_{\gamma}$ satisfies $\norm{M^{-j} y^*_{\gamma} }_{\infty} = \inf \left\{ \norm{ M^{-j} y }_{\infty} : y \in [\gamma,\gamma+L e_N] \right \}$. Then we have with the properties of the non-increasing radial functions $h$ and $\tilde{h}$ and for $a=1,2,4,8$
\begin{align*}
		& \sum_{\gamma\in\Z^d} \left(	\int_{\R^d}  \, \1{\supp \Phi(\darg - \gamma )} \,h(M^{-j} z) \intd{z} \right)^{1/a} \le L^{d/a} \sum_{\gamma \in \Z^d} h(M^{-j} y^*_{\gamma} )^{1/a} \le C \, L^{d/a} 2^d  \, \norm{h^{1/a} }_1 \, |M|^{j},
\end{align*}
where the constant $C$ depends on the data dimension $d$. The factor $|M|^j$ in the last inequality is due to a change of variables. Similarly, we obtain for the integral from \eqref{IntegrabilityForDependentSumsEq4} 
$$
\sum_{\gamma\in\Z^d} \left(\int_{\R^d} \1{ \supp \Phi(\cdot - \gamma) }(z)\, \tilde{h}(M^{-j} z) \right)^{2/a} \le C L^{2d/a} 2^d \norm{ \tilde{h}^{2/a} }_1 |M|^j.
$$
This finishes the proof.
\end{proof}

\begin{lemma}\label{LemmaExpInequalityWavelets}
Assume the real valued random field $Z$ to satisfy Condition \ref{regCond0}. Set $q_n \coloneqq B\ln |I_n|$ for some $B>0$. Then there are positive constants $A_1,A_2,A_3$ such that for all $j\in\Z$ and $\gamma\in\Z^N$
\begin{align*}
	\p( |\hat{\theta}_{j,\gamma} - \theta_{j,\gamma} | \ge x ) \le 
	\begin{cases}
			2 \exp\left( - \frac{|I_n| x^2}{A_2 + 2^N A_1 q_n^N |M|^{j/2} x / 3} \right) + A_3 \frac{2^N |M|^{j/2}}{x |I_n|^{c_1 B} },& \quad  \text{for } x \le A_1 |M|^{j/2} \\
			2 \exp\left( - \frac{|I_n| x^2}{A_2 + 2^N A_1 q_n^N |M|^{j/2} x / 3} \right), &  \quad \text{for } x > A_1 |M|^{j/2}.
			\end{cases}
\end{align*}
Here the constant $c_1>0$ is due to the bound on the mixing coefficients and guaranteed by Condition~\ref{regCond0}.

The same result is also true for $\p( |\hat{\nu}_{k,j,\gamma} - \nu_{k,j,\gamma} | \ge x )$ for all $k=1,\ldots,|M|$, $j\in\Z$ and $\gamma\in\Z^N$.
\end{lemma}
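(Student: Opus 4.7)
The plan is to apply a Bernstein-type inequality after the standard big-block/small-block decomposition for strongly mixing random fields. Write $W_s \coloneqq \Phi_{j,\gamma}(Z(s)) - \theta_{j,\gamma}$ so that $\hat{\theta}_{j,\gamma} - \theta_{j,\gamma} = |I_n|^{-1}\sum_{s\in I_n}W_s$. Because $\Phi$ is bounded with compact support, $|W_s|\le 2\|\Phi\|_\infty |M|^{j/2}$, and consequently $|\hat{\theta}_{j,\gamma} - \theta_{j,\gamma}|\le 2\|\Phi\|_\infty |M|^{j/2}$ almost surely. Choosing $A_1 \ge 2\|\Phi\|_\infty$ therefore makes the event $\{|\hat{\theta}_{j,\gamma}-\theta_{j,\gamma}|\ge x\}$ empty whenever $x > A_1|M|^{j/2}$, which is precisely why no polynomial correction term is needed in that regime: only the (vacuously valid) exponential bound must be stated.

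Next I would control the variance of $\sum_{s\in I_n}W_s$ uniformly in $j$ and $n$ by $A_2|I_n|$. The diagonal contribution is $|I_n|\operatorname{Var}(W_s)$, which is uniformly bounded thanks to the bound $f\le h$ of Condition~\ref{regCond0}(3); off-diagonal covariances at distance larger than $q_n$ are summable via Davydov's inequality and the exponential mixing assumption; and nearby off-diagonal terms are controlled by the joint-density bound $\tilde{h}$ in Condition~\ref{regCond0}(3). This is the same decomposition that underlies Proposition~\ref{IntegrabilityForDependentSums}.

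For the regime $x\le A_1|M|^{j/2}$, partition $I_n$ into cubes of side $2q_n$, keep a big block of side $q_n$ in one corner of each cube, and label the big blocks by the parity $\sigma\in\{0,1\}^N$ of their multi-index. Decompose $\sum_{s\in I_n}W_s$ over the $2^N$ parity classes (plus a boundary remainder of cardinality $\cO(|I_n|/q_n)$, absorbed into constants). Within a single parity class the big blocks are pairwise spatially separated by at least $q_n$, so Bradley's coupling lemma produces independent copies $\tilde{U}_b$ of the block sums $U_b$ satisfying $\p(\tilde{U}_b\neq U_b)\le \alpha(q_n)\le c_0|I_n|^{-c_1 B}$. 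Classical Bernstein applied to each of the $2^N$ independent sums, using the block sup bound $|\tilde{U}_b|\le 2\|\Phi\|_\infty q_n^N|M|^{j/2}$ and the variance bound $A_2$, followed by a union bound over parity classes, produces the stated exponential factor $2\exp\bigl(-|I_n|x^2/(A_2 + 2^N A_1 q_n^N|M|^{j/2} x / 3)\bigr)$.

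The coupling error is handled by Markov on the discrepancy. Letting $\hat{\theta}^{\text{coup}}_{j,\gamma}$ denote the coupled version, one has $|\hat{\theta}_{j,\gamma}-\hat{\theta}^{\text{coup}}_{j,\gamma}|\le |I_n|^{-1}\sum_s|W_s-\tilde{W}_s|$, and since $W_s$ and $\tilde{W}_s$ agree unless their enclosing block fails to couple, summation gives $\E|\hat{\theta}_{j,\gamma}-\hat{\theta}^{\text{coup}}_{j,\gamma}|\le 4\|\Phi\|_\infty |M|^{j/2}\alpha(q_n)\le 4c_0\|\Phi\|_\infty|M|^{j/2}|I_n|^{-c_1 B}$; Markov's inequality then produces the second summand $A_3\cdot 2^N|M|^{j/2}/(x|I_n|^{c_1 B})$ for a suitable $A_3$. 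The main obstacle is the bookkeeping of the constants $A_1,A_2,A_3$ to ensure their independence from $j$ and $n$ and the correct placement of the logarithmic factor $q_n^N$ (multiplying $|M|^{j/2} x$ in the Bernstein denominator rather than appearing in the leading $A_2$); this is standard once one adapts the one-dimensional spatial argument of \cite{LiWavelets} to dimension $N$. The extension to the detail coefficients $\hat{\upsilon}_{k,j,\gamma}-\upsilon_{k,j,\gamma}$ is verbatim, replacing $\Phi$ by the compactly supported bounded mother wavelets $\Psi_k$.
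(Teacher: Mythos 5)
Your overall architecture --- parity blocking, coupling to independent block sums, Bernstein for the coupled sums, Markov for the coupling discrepancy, and disposing of the regime $x>A_1|M|^{j/2}$ via the almost-sure bound $|\hat\theta_{j,\gamma}-\theta_{j,\gamma}|\le 2\norm{\Phi}_\infty|M|^{j/2}$ --- is exactly the mechanism behind the inequality that the paper simply imports (Lemma 4.6 of \cite{LiWavelets}, with $2^{j/2}$ replaced by $|M|^{j/2}$ and the block variance and sup-norm bounds re-verified), so in spirit you are reproving the cited lemma rather than taking a different route. However, there is a genuine gap in your coupling step. Condition~\ref{regCond0} only assumes $\alpha$-mixing, and Bradley's coupling lemma does \emph{not} deliver copies with $\p(\tilde U_b\neq U_b)\le\alpha(q_n)$: exact coupling with failure probability equal to the mixing coefficient is Berbee's lemma and requires $\beta$-mixing (absolute regularity), which is strictly stronger and not assumed here. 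What Bradley's lemma actually gives is a bound of the form $\p(|U_b-\tilde U_b|\ge\xi)\le C(\norm{U_b}_\infty/\xi)^{1/2}\,\alpha(q_n)^{1/2}$, and if you feed that into your Markov step the coupling error comes out with $\alpha(q_n)^{1/2}=\cO(|I_n|^{-c_1B/2})$ together with extra factors of the number of blocks and of $(\norm{\Phi}_\infty|M|^{j/2}q_n^N/\xi)^{1/2}$, not the asserted summand $A_3\,2^N|M|^{j/2}/(x|I_n|^{c_1B})$. So the second term of the stated bound does not follow from your argument as written. The step can be repaired: for bounded block sums one may use the $L^1$-coupling of Rio/Dedecker--Prieur (applied recursively along an enumeration of the blocks in a parity class), which produces an independent copy $U_b^*$ with $\E{|U_b-U_b^*|}\le C\norm{U_b}_\infty\,\alpha(q_n)$; with this, your own computation $\E{|\hat\theta_{j,\gamma}-\hat\theta^{\mathrm{coup}}_{j,\gamma}|}\le C\norm{\Phi}_\infty|M|^{j/2}\alpha(q_n)$ goes through verbatim and Markov yields the claimed polynomial term with exponent $c_1B$. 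Keeping Bradley instead forces a weaker exponent $c_1B/2$, i.e.\ a different statement (still sufficient downstream, but only after re-choosing $B$).

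Two smaller points. Your description of the blocking is internally inconsistent: tiling $I_n$ by cubes of side $2q_n$ and retaining only one corner sub-block of side $q_n$ per cube discards a fixed fraction $1-2^{-N}$ of the sites, which is not a boundary remainder of cardinality $\cO(|I_n|/q_n)$ and cannot be absorbed into constants of an exponential bound for the full sum; what you evidently intend (and what the factor $2^N$ in the statement reflects) is the full partition into cubes of side $q_n$ grouped into $2^N$ parity classes, with every site kept. Finally, the union bound over the $2^N$ classes gives a prefactor $2^{N+1}$ rather than $2$ (absorbable into $A_1,A_2$ since the probability is at most one), and the variance bookkeeping needs a $j$-dependent cut-off radius rather than a cut at $q_n$ if $A_2$ is to be independent of $j$ and $n$ for all $j\in\Z$; both are fixable but should be spelled out.
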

\begin{proof}
We use Lemma 4.6 from \cite{LiWavelets}, we only have to replace the factor $2^{j/2}$ by $|M|^{j/2}$. We use that for an rectangular set $\tilde{I}\subseteq \Z^N$ and in case that Condition \ref{regCond0} is satisfied, it is true that for all $k,j,\gamma$
$$
		\E{ \left( \sum_{s\in \tilde{I}} \Psi_{k,j,\gamma}(Z_s) - \E{\Psi_{k,j,\gamma}(Z_s)} \right)^2 } \le C \norm{ \Psi_{k,j,\gamma} }_{\infty}^2 L^{2d} \left( \norm{h}_\infty + \norm{h}_\infty^{1/2} + \norm{\tilde{h} }^2_\infty  \right) |\tilde{I}|,
$$
where the constant $C$ only depends on the lattice dimension $N$ and the mixing coefficients.

Moreover, we bound the probability $P_2(x)$ from Equations (A.4) and (A.7) in this article with the maximum norm of the functions, which is $|M|^{j/2}$ times a constant. This yields the result.
\end{proof}

\begin{lemma}\label{LemmaMomentInequalityWavelets}
Let $q \ge 2$ and $B>0$, set $q_n \coloneqq B\ln |I_n|$. Then it is true that
$$
		\E{ |\hat{\theta}_{j,\gamma} - \theta_{j,\gamma}|^q } \le C_q \left\{ |I_n|^{-q/2} + \left(\frac{|M|^{j/2} q_n^N}{|I_n|}\right)^q + \frac{ |M|^{jq/2} }{|I_n|^{c_1B}}  \right\},
$$
where the constant $C_q$ depends on $B$. The same relation is true for $\E{ |\hat{\upsilon}_{k,j,\gamma} - \upsilon_{k,j,\gamma}|^q }$.
\end{lemma}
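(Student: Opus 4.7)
The plan is to derive the moment inequality from the tail bound in Lemma~\ref{LemmaExpInequalityWavelets} via the standard identity
\[
 \E{|X|^q} = q\int_0^\infty x^{q-1}\,\p(|X|\ge x)\,dx,
\]
applied to $X = \hat{\theta}_{j,\gamma}-\theta_{j,\gamma}$ (the argument for $\hat{\upsilon}_{k,j,\gamma}-\upsilon_{k,j,\gamma}$ is identical). Substituting the tail bound from Lemma~\ref{LemmaExpInequalityWavelets} yields three pieces whose integrals produce, respectively, the three terms on the right-hand side of the claim.

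First I would handle the Bernstein-type exponential $2\exp\{-|I_n|x^2/(A_2 + 2^N A_1 q_n^N|M|^{j/2} x/3)\}$, which is valid on all of $\R_+$. As is standard, I would split the denominator at its balance point $x_0 \coloneqq 3A_2/(2^N A_1 q_n^N|M|^{j/2})$: for $x\le x_0$ the bound is at most $2\exp(-|I_n|x^2/(2A_2))$, giving a Gaussian tail whose $q$-th moment integral equals a $\Gamma$-function constant times $(A_2/|I_n|)^{q/2}$; for $x>x_0$ the bound is at most $2\exp(-3|I_n|x/(2^{N+1}A_1 q_n^N|M|^{j/2}))$, giving a sub-exponential tail whose $q$-th moment integral equals a $\Gamma(q+1)$ constant times $\bigl(2^{N+1}A_1 q_n^N|M|^{j/2}/(3|I_n|)\bigr)^q$. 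These two contributions are absorbed into $C_q\{|I_n|^{-q/2} + (|M|^{j/2}q_n^N/|I_n|)^q\}$.

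Second, the polynomial tail contribution $A_3\cdot 2^N|M|^{j/2}/(x|I_n|^{c_1 B})$ appears only for $x\le A_1|M|^{j/2}$, so I would estimate
\[
 q\int_0^{A_1|M|^{j/2}} x^{q-1}\cdot\frac{A_3\cdot 2^N|M|^{j/2}}{x\,|I_n|^{c_1 B}}\,dx
 = \frac{q\,A_3\cdot 2^N|M|^{j/2}}{(q-1)|I_n|^{c_1 B}}\cdot (A_1|M|^{j/2})^{q-1}
 \;\lesssim\; \frac{|M|^{jq/2}}{|I_n|^{c_1 B}},
\]
which is exactly the third term of the stated bound.

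Summing the three contributions and taking $C_q$ to depend on $q$, $B$, $N$, and on the constants $A_1,A_2,A_3,c_1$ from Lemma~\ref{LemmaExpInequalityWavelets} finishes the proof. There is no essential obstacle: the only thing to watch is the case distinction in Lemma~\ref{LemmaExpInequalityWavelets} at the boundary $x=A_1|M|^{j/2}$, but because the polynomial summand is dropped for larger $x$, extending its integration up to $A_1|M|^{j/2}$ (rather than to $+\infty$) is precisely what keeps the last term at the required order $|M|^{jq/2}/|I_n|^{c_1 B}$ and prevents a divergent contribution from $\int x^{q-2}\,dx$.
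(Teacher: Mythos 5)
Your proposal is correct and follows essentially the same route as the paper's own proof: the layer-cake identity combined with the tail bound from Lemma~\ref{LemmaExpInequalityWavelets}, a Bernstein-type split of the exponential term at the balance point of the denominator (yielding the $|I_n|^{-q/2}$ and $(q_n^N|M|^{j/2}/|I_n|)^q$ contributions via incomplete gamma integrals), and a direct integration of the polynomial tail over $[0,A_1|M|^{j/2}]$ for the $|M|^{jq/2}|I_n|^{-c_1B}$ term. No gaps; only the bookkeeping of absolute constants differs.
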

\begin{proof}
We use Lemma~\ref{LemmaExpInequalityWavelets}. Define $x^* = \frac{3b_2}{2^N b_1} (q_n^N |M|^{j/2})^{-1}$. Then
\begin{align}
		\E{ |\hat{\theta}_{j,\gamma} - \theta_{j,\gamma}|^q } &= q \int_{0}^\infty x^{q-1} \p(|\hat{\theta}_{j,\gamma} - \theta_{j,\gamma}| \ge x ) \intd{x} \nonumber \\
		& \le 2q \int_{0}^\infty x^{q-1} \left\{  \exp\left( - \frac{|I_n| x^2}{A_2 + 2^N A_1 q_n^N |M|^{j/2} x / 3} \right) + \1{ x \le A_1 |M|^{j/2}} A_3 \frac{|M|^{j/2}}{x |I_n|^{c_1 B} } \right\} \intd{x} \nonumber \\
		\begin{split}\label{LemmaMomentInequalityWaveletsEq1}
		&\le 2q \int_{0}^{x^*} x^{q-1}   \exp\left( - \frac{|I_n| x^2}{2A_2} \right) \intd{x} + 2q \int_{x^*}^{\infty} x^{q-1}  \exp\left( - \frac{|I_n| x^2}{2^N A_1 q_n^N |M|^{j/2} x } \right) \intd{x} \\
		&\quad + 2q A_3\int_0^{A_1 |M|^{j/2}} x^{q-1} \frac{2^N |M|^{j/2}}{x |I_n|^{c_1 B} }  \intd{x}.
		\end{split}
\end{align}
The first integral in \eqref{LemmaMomentInequalityWaveletsEq1} is bounded by
$$
			q \left(\frac{2A_2}{|I_n|}\right)^{q/2} \gamma\left( \frac{q}{2}, C \frac{|I_n|}{q_n^N |M|^{j/2}}  \right) \le C |I_n|^{-q/2},
$$
here we (temporarily) denote the lower incomplete gamma function by $\gamma(\cdot,\cdot)$. Likewise, the second integral is bounded by (modulo a constant)
$$
		\left( \frac{q_n^N |M|^{j/2} }{|I_n|} \right)^q \Gamma\left(q	, C \frac{ |I_n|}{ q_n^{2N} |M|^{j} } \right) \le C \left( \frac{q_n^N |M|^{j/2} }{|I_n|} \right)^q .
$$
Here $\Gamma(\cdot,\cdot)$ is the upper incomplete gamma function. The last integral in \eqref{LemmaMomentInequalityWaveletsEq1} is at most $|M|^{jq/2} |I_n|^{- c_1 B}$ (times a constant). This finishes the proof.
\end{proof}

\section{The question of normalization}\label{Appendix_QuestionOfNormalization}
This appendix contains a result on the convergence of the normalized density estimator: let $p\ge 1$ and $(f_k:k\in\N_+)$ be a sequence of density projections onto (increasing) subspaces of $ L^p(\lambda^d)\cap L^2(\lambda^d)$. Furthermore, let $(\tilde{f}_k: k\in\N_+)\subseteq L^p(\lambda^d \otimes \p) \cap L^2(\lambda^d \otimes \p)$ be a corresponding sequence of density estimators. Define the normalized nonparametric density estimator by
\begin{align}\label{nonparametricDensityEstimator1}
		\hat{f}_k \coloneqq  \frac{1}{S_k} \tilde{f}_k^+, \text{ where } S_k \coloneqq \int_{\R^d} \tilde{f}_k^+ \intd{\lambda}^d
\end{align}
is the normalizing constant.  We have in this case the general result

\begin{proposition}[$L^p$-convergence of $\hat{f}_k$]\label{densityConvergenceLp}
Let $p\in [1,\infty)$ and $f\in L^p(\lambda^d)$ be a density. If the estimator $\tilde{f}_k$ converges to $f$ in $L^p(\lambda^d)$ a.s. and in $L^1(\lambda^d)$ a.s., then $\hat{f}_k$ converges to $f$ in $L^p(\lambda^d)$ a.s. Furthermore, let $\tilde{f}_k$ converge to $f$ in $L^p(\lambda^d\otimes \p)$ and in $L^1(\lambda^d\otimes \p)$; additionally, if $p>1$, let $\liminf_{k \rightarrow \infty} \norm{S_k}_{L^{\infty}(\p) } \ge \delta > 0$. Then the estimator $\hat{f}_k$ converges to $f$ in $L^p(\lambda^d\otimes \p)$.
\end{proposition}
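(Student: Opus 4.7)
First I would dispose of the almost-sure statement by a direct perturbation argument. Because $f\ge 0$, the pointwise inequality $|\tilde f_k^+ - f| = |\tilde f_k^+ - f^+| \le |\tilde f_k - f|$ transfers $L^p$- and $L^1$-a.s.\ convergence from $\tilde f_k$ to $\tilde f_k^+$. Hence $|S_k - 1| \le \|\tilde f_k^+ - f\|_{L^1(\lambda^d)} \to 0$ a.s., so $S_k > 0$ eventually, and the identity
$$
  \hat f_k - f \;=\; \frac{1}{S_k}\bigl(\tilde f_k^+ - f\bigr) \;+\; \frac{1-S_k}{S_k}\,f
$$
together with the triangle inequality gives $\|\hat f_k - f\|_{L^p(\lambda^d)} \le S_k^{-1}\|\tilde f_k^+ - f\|_{L^p(\lambda^d)} + S_k^{-1}|1-S_k|\,\|f\|_{L^p(\lambda^d)} \to 0$ a.s.

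For the $L^p(\lambda^d\otimes\p)$ statement, the plan is a subsequence-plus-uniform-integrability argument. Since $\tilde f_k \to f$ in both $L^p(\lambda^d\otimes\p)$ and $L^1(\lambda^d\otimes\p)$, every subsequence admits a further subsequence $(k_j)$ along which $\|\tilde f_{k_j}-f\|_{L^p(\lambda^d)} \to 0$ and $\|\tilde f_{k_j}-f\|_{L^1(\lambda^d)} \to 0$ hold $\p$-a.s. The first part of the proposition applied along $(k_j)$ then yields $\|\hat f_{k_j}-f\|_{L^p(\lambda^d)} \to 0$ $\p$-a.s., so the full sequence $\|\hat f_k - f\|_{L^p(\lambda^d)}$ converges to $0$ in $\p$-probability. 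To upgrade probability convergence to convergence in mean I would separate $p=1$ and $p>1$. For $p=1$, both $\hat f_k$ and $f$ are probability densities (once $S_k>0$), so $\|\hat f_k-f\|_{L^1(\lambda^d)} \le 2$ uniformly and bounded convergence closes the argument. For $p>1$, the uniform lower bound $S_k\ge\delta$ gives $\|\hat f_k\|_{L^p(\lambda^d)} \le \delta^{-1}\|\tilde f_k\|_{L^p(\lambda^d)}$, whence
$$
  \|\hat f_k - f\|_{L^p(\lambda^d)}^p \;\le\; 2^{p-1}\bigl(\delta^{-p}\|\tilde f_k\|_{L^p(\lambda^d)}^p + \|f\|_{L^p(\lambda^d)}^p\bigr).
$$
Since $\tilde f_k \to f$ in $L^p(\lambda^d\otimes\p)$, the family $\{\|\tilde f_k\|_{L^p(\lambda^d)}^p\}_k$, viewed as random variables on $(\Omega,\p)$, is uniformly integrable (the standard equivalence: $L^p$-convergence of real random variables implies uniform integrability of the $p$-th powers). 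The display above therefore dominates $\{\|\hat f_k - f\|_{L^p(\lambda^d)}^p\}_k$ by a uniformly integrable family, and Vitali's theorem converts convergence in probability into convergence in $L^1(\p)$.

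The main obstacle I anticipate is the $p>1$ regime. The easy, purely analytic bound $|1-S_k| \le \|\tilde f_k^+ - f\|_{L^1(\lambda^d)}$ only produces $L^1(\p)$-control on the normalization defect, not $L^p(\p)$-control, so one cannot simply take $L^p(\p)$-norms on both sides of the decomposition of $\hat f_k - f$. The role of the hypothesis on $S_k$ (read as $S_k\ge \delta>0$ $\p$-a.s.\ for large $k$) is precisely to bound $1/S_k$ uniformly and to dominate $\|\hat f_k\|_{L^p(\lambda^d)}$ by a uniformly integrable family, which is exactly what the Vitali step needs; this is why the condition is only invoked when $p>1$.
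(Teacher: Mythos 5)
Your argument is correct, and it reaches the conclusion by a route that differs from the paper's in both halves. For the almost-sure part, the paper bounds $\int|\hat{f}_k-\tilde{f}_k|^{p}\,\intd{\lambda^d}$ by a negative-part term $\int(\tilde{f}_k^-)^{p}\intd{\lambda^d}$ plus a normalization term $|1-1/S_k|^{p}\int(\tilde{f}_k^+)^{p}\intd{\lambda^d}$, and controls the negative part through a truncation argument (the sets $K(\epsilon_1)$ and $A(\epsilon_2)$) and dominated convergence, which simultaneously yields $S_k\to 1$; your observation that $x\mapsto x^+$ is $1$-Lipschitz and $f=f^+$ gives $|\tilde{f}_k^+-f|\le|\tilde{f}_k-f|$ pointwise, which subsumes both the negative-part estimate and $|S_k-1|\le\norm{\tilde{f}_k-f}_{L^1(\lambda^d)}$ in one line, so your decomposition $\hat{f}_k-f=S_k^{-1}(\tilde{f}_k^+-f)+\frac{1-S_k}{S_k}f$ settles the a.s.\ statement without the paper's truncation step. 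For convergence in $L^p(\lambda^d\otimes\p)$, the paper proves mean convergence term by term in its decomposition, using that $|1-1/S_k|\le 1+1/\delta$ for large $k$ together with the continuous mapping theorem and uniform integrability of $\{|1-1/S_k|^p\}$; you instead pass to convergence in $\p$-probability of $\norm{\hat{f}_k-f}_{L^p(\lambda^d)}$ via subsequences and close with Vitali, dominating $\norm{\hat{f}_k-f}_{L^p(\lambda^d)}^{p}$ by the uniformly integrable family $2^{p-1}(\delta^{-p}\norm{\tilde{f}_k}_{L^p(\lambda^d)}^{p}+\norm{f}_{L^p(\lambda^d)}^{p})$, uniform integrability coming from $\norm{\tilde{f}_k}_{L^p(\lambda^d)}\to\norm{f}_{L^p(\lambda^d)}$ in $L^p(\p)$. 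Both proofs invoke the condition on $S_k$ only when $p>1$ and both read it as an a.s.\ lower bound $S_k\ge\delta$ for all large $k$ (as the statement's $\liminf_k\norm{S_k}_{L^\infty(\p)}\ge\delta$ literally concerns an essential supremum, this reading is what the paper's own proof uses as well -- you were right to flag it); your $p=1$ case, using that $\hat{f}_k$ and $f$ integrate to at most one so that $\norm{\hat{f}_k-f}_{L^1(\lambda^d)}\le 2$, matches the paper in not needing $\delta$ there. In short, your proof is shorter and more transparent in the a.s.\ step, while the paper's term-by-term decomposition makes the individual error contributions (negative part versus normalization defect) more explicit; the hypotheses used are the same.
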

\begin{proof}[Proof of Proposition~\ref{densityConvergenceLp}]
It remains to prove the desired convergence for the term $|\hat{f}_k - \tilde{f}_k|^p$:
\begin{align}\label{densityConvergenceLp2-1}
		&\int_{\R^d} | \hat{f}_k - \tilde{f}_k |^p \,\intd{\lambda}^d \le 2^p \int_{\R^d} (\tilde{f}_k^-)^p \,\intd{\lambda}^d +  2^p \left| 1 - \frac{1}{S_k} \right|^p \,\int_{\R^d} ( \tilde{f}_k^+ )^p \intd{\lambda}^d. 
		\end{align}
Consider the first term in \eqref{densityConvergenceLp2-1},
\begin{align}\label{densityConvergenceLp20}
	\int_{\R^d} |\tilde{f}_k^-|^p \intd{\lambda^d} \le 2^p \int_{\R^d} |f-\tilde{f}_k|^p \, \intd{\lambda^d} + 2^p \int_{\R^d} f^p \I\{ f < f-\tilde{f}_k \} \, \intd{\lambda^d}.
	\end{align}
An application of Lebesgue's dominated convergence theorem shows that the second error in \eqref{densityConvergenceLp20} converges to zero both in the mean and $a.s.$: indeed, we define for $1>\epsilon_1,\epsilon_2 > 0$
\begin{align*}
	L(\epsilon_1) \coloneqq \inf \left\{a\in\R_+: \int_{[-a,a]^d} f^p \, \intd{\lambda}^d \ge 1-\epsilon_1 \right\} < \infty, \quad K(\epsilon_1) \coloneqq [-L(\epsilon_1), L(\epsilon_1)]^d \text{ and } A(\epsilon_2) \coloneqq \{ f > \epsilon_2 \}.
\end{align*}
We get
\begin{align*}
\int_{\{ f < f - \tilde{f}_k\}} f^p \, \intd{\lambda}^d 	&\le \epsilon_1 + \int_{K(\epsilon_1) } f^p \, \1{ f < f - \tilde{f}_k } \intd{\lambda}^d  \\
& \le \epsilon_1 + \int_{K(\epsilon_1) \cap A(\epsilon_2) } f^p\, \1{ \epsilon_2 < |f - \tilde{f}_k| } \intd{\lambda}^d + \epsilon_2^p\lambda^d(K(\epsilon_1)).
	\end{align*}
If $|f-\tilde{f}_k| \rightarrow 0$ in $L^1(\lambda^d\otimes\p)$ and $f\in L^p(\lambda^d)$, then 
\[
	\limsup_{k\rightarrow\infty} \E{ \int_{K(\epsilon_1)\cap A(\epsilon_2)} f^p\, \1{ \epsilon_2 < |f - \tilde{f}_k| } \intd{\lambda}^d } = 0
	\]
with Lebesgue's dominated convergence theorem applied to the measure $\lambda^d\otimes\p$. In the same way, if $|f-\tilde{f}_k| \rightarrow 0$ in $L^1(\lambda^d)$ on a set $\Omega_0 \in \cA$ with $\p(\Omega_0)=1$ and $f\in L^p(\lambda^d)$, then $\limsup_{k\rightarrow\infty} \int_{K(\epsilon_1)\cap A(\epsilon_2)} f^p\, \1{ \epsilon_2 < |f - \tilde{f}_k| } \intd{\lambda}^d = 0$ with Lebesgue's dominated convergence theorem applied to $\lambda^d$ for each $\omega\in\Omega_0$. In addition, this implies $S_k\rightarrow 1$ in the mean and $a.s.$ This finishes the computations on the first term in \eqref{densityConvergenceLp2-1}. We can bound the second term in \eqref{densityConvergenceLp2-1} as
\begin{align}
		 \left| 1 - \frac{1}{S_k} \right|^p \,\int_{\R^d} ( \tilde{f}_k^+ )^p \, \intd{\lambda}^d \le 2^p \left| 1 - \frac{1}{S_k} \right|^p  \int_{\R^d} f^p \, \intd{\lambda^d} + 2^p \left| 1 - \frac{1}{S_k} \right|^p  \int_{\R^d} |\tilde{f}_k - f|^p \, \intd{\lambda^d}. \label{densityConvergenceLp2}
\end{align}
The error $| 1 - 1/S_k|$ on the RHS of \eqref{densityConvergenceLp2} converges to zero $a.s.$ by the continuous mapping theorem. In particular, the RHS of \eqref{densityConvergenceLp2} converges to zero $a.s.$ We come to the convergence in mean. Again by the continuous mapping theorem, the first term on the RHS of \eqref{densityConvergenceLp2} converges to zero in probability. Furthermore, there is a $k^*\in\N_+$ such that for $k\ge k^*$ this term is bounded by $2^p (1+1/\delta)^p \norm{f}_p^p$. Hence, the family $\{ |1-1/S_k|^p: k\ge k^* \}$ is uniformly integrable and this factor converges to zero in the mean. In addition, the first factor in the second term on the RHS of \eqref{densityConvergenceLp2} is bounded for all $k\ge k^*$ and, thus, the whole term converges to zero in the mean.
\end{proof}

\section*{References}


\begin{thebibliography}{44}
\providecommand{\natexlab}[1]{#1}
\providecommand{\url}[1]{\texttt{#1}}
\expandafter\ifx\csname urlstyle\endcsname\relax
  \providecommand{\doi}[1]{doi: #1}\else
  \providecommand{\doi}{doi: \begingroup \urlstyle{rm}\Url}\fi

\bibitem[Amiri et~al.(2016)Amiri, Dabo-Niang, and
  Yahaya]{amiri2016nonparametric}
A.~Amiri, S.~Dabo-Niang, and M.~Yahaya.
\newblock Nonparametric recursive density estimation for spatial data.
\newblock \emph{Comptes Rendus Mathematique}, 354\penalty0 (2):\penalty0
  205--210, 2016.

\bibitem[Benedetto(1993)]{benedetto1993wavelets}
J.~Benedetto.
\newblock \emph{Wavelets: Mathematics and Applications}.
\newblock Studies in Advanced Mathematics. Taylor \& Francis, 1993.

\bibitem[Biau(2003)]{biau2003spatial}
G.~Biau.
\newblock Spatial kernel density estimation.
\newblock \emph{Mathematical Methods of Statistics}, 12\penalty0 (4):\penalty0
  371--390, 2003.

\bibitem[Bouzebda and Didi(2017)]{bouzebda2017multivariate}
S.~Bouzebda and S.~Didi.
\newblock Multivariate wavelet density and regression estimators for stationary
  and ergodic discrete time processes: Asymptotic results.
\newblock \emph{Communications in Statistics-Theory and Methods}, 46\penalty0
  (3):\penalty0 1367--1406, 2017.

\bibitem[Bouzebda et~al.(2015)Bouzebda, Didi, and
  El~Hajj]{bouzebda2015multivariate}
S.~Bouzebda, S.~Didi, and L.~El~Hajj.
\newblock Multivariate wavelet density and regression estimators for stationary
  and ergodic continuous time processes: Asymptotic results.
\newblock \emph{Mathematical Methods of Statistics}, 24\penalty0 (3):\penalty0
  163--199, 2015.

\bibitem[Bradley(2005)]{bradley2005basicMixing}
R.~C. Bradley.
\newblock Basic properties of strong mixing conditions. a survey and some open
  questions.
\newblock \emph{Probability Surveys}, 2\penalty0 (2):\penalty0 107--144, 2005.

\bibitem[Cai(1999)]{cai1999adaptive}
T.~T. Cai.
\newblock Adaptive wavelet estimation: a block thresholding and oracle
  inequality approach.
\newblock \emph{Annals of Statistics}, pages 898--924, 1999.

\bibitem[Carbon et~al.(1996)Carbon, Hallin, and Tat~Tran]{carbon1996kernel}
M.~Carbon, M.~Hallin, and L.~Tat~Tran.
\newblock Kernel density estimation for random fields: the ${L}^1$-theory.
\newblock \emph{Journal of Nonparametric Statistics}, 6\penalty0
  (2-3):\penalty0 157--170, 1996.

\bibitem[Carbon et~al.(2007)Carbon, Francq, and Tran]{carbon2007kernel}
M.~Carbon, C.~Francq, and L.~T. Tran.
\newblock Kernel regression estimation for random fields.
\newblock \emph{Journal of Statistical Planning and Inference}, 137\penalty0
  (3):\penalty0 778--798, 2007.

\bibitem[Chicken and Cai(2005)]{chicken2005block}
E.~Chicken and T.~T. Cai.
\newblock Block thresholding for density estimation: local and global
  adaptivity.
\newblock \emph{Journal of Multivariate Analysis}, 95\penalty0 (1):\penalty0
  76--106, 2005.

\bibitem[Cressie(1993)]{cressie1993statistics}
N.~Cressie.
\newblock \emph{Statistics for spatial data}.
\newblock Wiley series in Probability and Mathematical Statistics: Applied
  Probability and Statistics. J. Wiley, 1993.

\bibitem[Dabo-Niang and Yao(2013)]{dabo2013kernel}
S.~Dabo-Niang and A.-F. Yao.
\newblock Kernel spatial density estimation in infinite dimension space.
\newblock \emph{Metrika}, pages 1--34, 2013.

\bibitem[Dabo-Niang et~al.(2014)Dabo-Niang, Hamdad, Ternynck, and
  Yao]{dabo2014kernel}
S.~Dabo-Niang, L.~Hamdad, C.~Ternynck, and A.-F. Yao.
\newblock A kernel spatial density estimation allowing for the analysis of
  spatial clustering. application to monsoon asia drought atlas data.
\newblock \emph{Stochastic environmental research and risk assessment},
  28\penalty0 (8):\penalty0 2075--2099, 2014.

\bibitem[Daubechies(1992)]{daubechies1992ten}
I.~Daubechies.
\newblock \emph{Ten Lectures on Wavelets}.
\newblock CBMS-NSF Regional Conference Series in Applied Mathematics. Society
  for Industrial and Applied Mathematics, 1992.

\bibitem[Davydov(1968)]{davydov1968convergence}
Y.~A. Davydov.
\newblock Convergence of distributions generated by stationary stochastic
  processes.
\newblock \emph{Theory of Probability \& Its Applications}, 13\penalty0
  (4):\penalty0 691--696, 1968.

\bibitem[Davydov(1973)]{davydov1973mixing}
Y.~A. Davydov.
\newblock Mixing conditions for {M}arkov chains.
\newblock \emph{Teoriya Veroyatnostei i ee Primeneniya}, 18\penalty0
  (2):\penalty0 321--338, 1973.

\bibitem[Delouille et~al.(2001)Delouille, Franke, and
  Von~Sachs]{delouille2001nonparametric}
V.~Delouille, J.~Franke, and R.~Von~Sachs.
\newblock Nonparametric stochastic regression with design-adapted wavelets.
\newblock \emph{Sankhy{\=a}: The Indian Journal of Statistics, Series A}, pages
  328--366, 2001.

\bibitem[Devroye and Gy{\"o}rfi(1985)]{devroye1985nonparametric}
L.~Devroye and L.~Gy{\"o}rfi.
\newblock \emph{Nonparametric Density Estimation: The $L^1$ View}.
\newblock Wiley Interscience Series in Discrete Mathematics. Wiley, 1985.

\bibitem[Donoho et~al.(1996)Donoho, Johnstone, Kerkyacharian, and
  Picard]{donoho1996}
D.~L. Donoho, I.~M. Johnstone, G.~Kerkyacharian, and D.~Picard.
\newblock Density estimation by wavelet thresholding.
\newblock \emph{The Annals of Statistics}, 24\penalty0 (2):\penalty0 508--539,
  04 1996.
\newblock \doi{10.1214/aos/1032894451}.

\bibitem[Donoho et~al.(1997)Donoho, Johnstone, Kerkyacharian, and
  Picard]{donoho1997universal}
D.~L. Donoho, I.~M. Johnstone, G.~Kerkyacharian, and D.~Picard.
\newblock Universal near minimaxity of wavelet shrinkage.
\newblock In \emph{Festschrift for Lucien Le Cam}, pages 183--218. Springer,
  1997.

\bibitem[Gin{\'e} and Madych(2014)]{gine2014wavelet}
E.~Gin{\'e} and W.~Madych.
\newblock On wavelet projection kernels and the integrated squared error in
  density estimation.
\newblock \emph{Statistics \& Probability Letters}, 91:\penalty0 32--40, 2014.

\bibitem[Gin{\'e} and Nickl(2009)]{gine2009uniform}
E.~Gin{\'e} and R.~Nickl.
\newblock Uniform limit theorems for wavelet density estimators.
\newblock \emph{The Annals of Probability}, pages 1605--1646, 2009.

\bibitem[Guyon(1995)]{guyon1995random}
X.~Guyon.
\newblock \emph{Random fields on a network: modeling, statistics, and
  applications}.
\newblock Springer Science \& Business Media, 1995.

\bibitem[Hall and Patil(1995)]{hall1995}
P.~Hall and P.~Patil.
\newblock Formulae for mean integrated squared error of nonlinear wavelet-based
  density estimators.
\newblock \emph{The Annals of Statistics}, 23\penalty0 (3):\penalty0 905--928,
  06 1995.
\newblock \doi{10.1214/aos/1176324628}.

\bibitem[Hall and Penev(2001)]{hall2001}
P.~Hall and S.~Penev.
\newblock Cross-validation for choosing resolution level for nonlinear wavelet
  curve estimators.
\newblock \emph{Bernoulli}, 7\penalty0 (2):\penalty0 317--341, 04 2001.

\bibitem[Hall et~al.(1998)Hall, Kerkyacharian, Picard, et~al.]{hall1998block}
P.~Hall, G.~Kerkyacharian, D.~Picard, et~al.
\newblock Block threshold rules for curve estimation using kernel and wavelet
  methods.
\newblock \emph{The Annals of Statistics}, 26\penalty0 (3):\penalty0 922--942,
  1998.

\bibitem[Hallin et~al.(2001)Hallin, Lu, and Tran]{hallin2001density}
M.~Hallin, Z.~Lu, and L.~T. Tran.
\newblock Density estimation for spatial linear processes.
\newblock \emph{Bernoulli}, pages 657--668, 2001.

\bibitem[Hallin et~al.(2004)Hallin, Lu, Tran, et~al.]{hallin2004local}
M.~Hallin, Z.~Lu, L.~T. Tran, et~al.
\newblock Local linear spatial regression.
\newblock \emph{The Annals of Statistics}, 32\penalty0 (6):\penalty0
  2469--2500, 2004.

\bibitem[H{\"a}rdle et~al.(1998)H{\"a}rdle, Kerkyacharian, Picard, and
  Tsybakov]{hardle2012wavelets}
W.~H{\"a}rdle, G.~Kerkyacharian, D.~Picard, and A.~Tsybakov.
\newblock \emph{Wavelets, Approximation, and Statistical Applications}.
\newblock Lecture Notes in Statistics. Springer New York, 1998.

\bibitem[Haroske and Triebel(2005)]{haroske2005wavelet}
D.~D. Haroske and H.~Triebel.
\newblock Wavelet bases and entropy numbers in weighted function spaces.
\newblock \emph{Mathematische Nachrichten}, 278\penalty0 (1-2):\penalty0
  108--132, 2005.

\bibitem[Kaiser et~al.(2012)Kaiser, Lahiri, and Nordman]{kaiser2012}
M.~S. Kaiser, S.~N. Lahiri, and D.~J. Nordman.
\newblock Goodness of fit tests for a class of {M}arkov random field models.
\newblock \emph{The Annals of Statistics}, 40\penalty0 (1):\penalty0 104--130,
  02 2012.
\newblock \doi{10.1214/11-AOS948}.

\bibitem[Kelly et~al.(1994)Kelly, Kon, and Raphael]{KellyWaveletExpansions}
S.~Kelly, M.~Kon, and L.~Raphael.
\newblock Local convergence for wavelet expansions.
\newblock \emph{Journal of Functional Analysis}, 126\penalty0 (1):\penalty0 102
  -- 138, 1994.
\newblock \doi{http://dx.doi.org/10.1006/jfan.1994.1143}.

\bibitem[Kerkyacharian and Picard(1992)]{kerkyacharian1992density}
G.~Kerkyacharian and D.~Picard.
\newblock Density estimation in {B}esov spaces.
\newblock \emph{Statistics \& Probability Letters}, 13\penalty0 (1):\penalty0
  15--24, 1992.

\bibitem[Krebs(2018)]{krebs2017orthogonal}
J.~T. Krebs.
\newblock Orthogonal series estimates on strong spatial mixing data.
\newblock \emph{Journal of Statistical Planning and Inference}, 193:\penalty0
  15 -- 41, 2018.

\bibitem[Lemari{\'e} and Meyer(1986)]{Lemarie1986}
P.~G. Lemari{\'e} and Y.~Meyer.
\newblock Ondelettes et bases hilbertiennes.
\newblock \emph{Revista Matemática Iberoamericana}, 2\penalty0 (1-2):\penalty0
  1--18, 1986.

\bibitem[Li(2015)]{LiWavelets}
L.~Li.
\newblock Nonparametric adaptive density estimation on random fields using
  wavelet method.
\newblock \emph{Statistics \& Probability Letters}, 96:\penalty0 346 -- 355,
  2015.

\bibitem[Masry(1997)]{masry1997multivariate}
E.~Masry.
\newblock Multivariate probability density estimation by wavelet methods:
  Strong consistency and rates for stationary time series.
\newblock \emph{Stochastic processes and their applications}, 67\penalty0
  (2):\penalty0 177--193, 1997.

\bibitem[Masry(2000)]{masry2000wavelet}
E.~Masry.
\newblock Wavelet-based estimation of multivariate regression functions in
  {B}esov spaces.
\newblock \emph{Journal of nonparametric statistics}, 12\penalty0 (2):\penalty0
  283--308, 2000.

\bibitem[Meyer(1995)]{meyer1990ondelettes}
Y.~Meyer.
\newblock \emph{Wavelets and operators}, volume~1.
\newblock Cambridge university press, 1995.

\bibitem[Rosenblatt(1956)]{rosenblatt1956central}
M.~Rosenblatt.
\newblock A central limit theorem and a strong mixing condition.
\newblock \emph{Proceedings of the National Academy of Sciences}, 42\penalty0
  (1):\penalty0 43--47, 1956.

\bibitem[Strichartz(1993)]{strichartz1993wavelets}
R.~S. Strichartz.
\newblock Construction of orthonormal wavelets.
\newblock In J.~J. Benedetto, editor, \emph{Wavelets: mathematics and
  applications}, pages 23--50. CRC press, 1993.

\bibitem[Triebel(1992)]{triebel1992theory}
H.~Triebel.
\newblock \emph{Theory of function spaces {II}.}
\newblock Birkh{\"a}user, Basel, 1992.

\bibitem[Withers(1981)]{Withers1981}
C.~S. Withers.
\newblock Conditions for linear processes to be strong-mixing.
\newblock \emph{Zeitschrift f{\"u}r Wahrscheinlichkeitstheorie und Verwandte
  Gebiete}, 57\penalty0 (4):\penalty0 477--480, 1981.
\newblock ISSN 1432-2064.

\bibitem[Xue(2004)]{xue2004approximation}
L.-G. Xue.
\newblock Approximation rates of the error distribution of wavelet estimators
  of a density function under censorship.
\newblock \emph{Journal of statistical planning and inference}, 118\penalty0
  (1):\penalty0 167--183, 2004.

\end{thebibliography}

\end{document}